
\documentclass[12pt,reqno]{amsart}
\usepackage{amssymb,latexsym,amsmath,epsfig,amsthm,mathrsfs}

\usepackage{comment}
\usepackage{rotating}
\usepackage{graphicx}
\usepackage{amssymb}
\usepackage{lineno}
\usepackage{enumitem}
\usepackage{cite}
\usepackage{multicol}
\usepackage[top=3cm, bottom=3cm, left=2.5cm, right=2.5cm]{geometry}
\usepackage[usenames]{color}
\usepackage[colorlinks=true,
linkcolor=blue,
filecolor=blue,
citecolor=blue]{hyperref}


\numberwithin{equation}{section}

\newtheorem{prop}{Proposition}[section]

\newtheorem{theorem}{Theorem}[section]

\newtheorem{conjecture}[theorem]{Conjecture}

\begin{document}
	
	\title[Extended Inverse results for restricted $h$-fold sumset in  integers]{Extended Inverse results for restricted $h$-fold sumset in  integers}
	
	\author[ Debyani Manna  ]{Debyani Manna}
	\address{Department of Mathematics, Indian Institute of Technology Roorkee, Uttarakhand, 247667, India}
	\email{d\_manna@ma.iitr.ac.in, debyani97@gmail.com}

	\author[Mohan]{Mohan}
	\address{Department of Applied Sciences and Humanities, BK Birla Institute of Engineering and Technology, Pilani, 333031, India}
	\email{mohan98math@gmail.com}
	\author[Ram Krishna Pandey]{Ram Krishna Pandey$^{\dagger}$}
	\address{Department of Mathematics, Indian Institute of Technology Roorkee, Uttarakhand, 247667, India}
	\email{ram.pandey@ma.iitr.ac.in}

	\begin{abstract}
	Let $A$ be a finite set of $k$ integers. For $h \leq k$, the \textit{restricted h-fold sumset} $h^{\wedge}A$ is set of all sums of $h$ distinct elements of the set $A$. In additive combinatorics, much of the focus has traditionally been on finite integer sets whose sumsets are unusually small (cf. Freiman’s theorem and its extensions). More recently, Nathanson posed the inverse problem for the restricted sumset $h^{\wedge}A$ when $|h^{\wedge}A|$ is small. For $h \in \{2,3,4\}$, this question has already been studied by Mohan and Pandey. In this article, we study the inverse problems for $h^{\wedge}A$ with arbitrary $h \geq 3$ and characterize all possible sets $A$ for certain cardinalities of $h^{\wedge}A$.
		
		  
	\end{abstract}
	
	\maketitle
	

	\setcounter{page}{1}
	\section{Introduction}
	Let $\mathbb{Z}$ be the set of all integers, $\mathbb{N}_{0}$ be the set of all nonnegative integers, and $\mathbb{N}$ be the set of all positive integers. For a given set $A$ of integers, $|A|$  denotes the cardinality of the set $A$.  For given   integers $\alpha$ and $\beta$, and a set $A$ of integers, we define
	\begin{align*}
		\alpha + A &:=\{\alpha + a : a \in A\},\\
		\alpha - A &:=\{\alpha - a : a \in A\},\\  
		\alpha \ast A &:=\{\alpha a: a \in A\},\\
		[\alpha,\beta] &:=\{x \in \mathbb{Z} : \alpha \leq x \leq \beta\}.
	\end{align*}
	The greatest common divisor of the given integers $c_{1}, c_{2}, \ldots, c_{k}$ is denoted by $d(c_{1}, c_{2}, \ldots, c_{k})$, in particular, $d(A)$ represents the the greatest common divisor of the elements of the set $A$.
	
	Let $h$ and $k$ be  positive integers. Let $A = \{a_{0}, a_{1}, \ldots,a_{k-1}\}$ be a nonempty finite set of $k$ integers.  The \textit{$h$-fold sumset}, denoted by $hA$,  is defined as follows: $$hA :=\left\lbrace\sum_{i=0}^{k-1} \lambda_i a_i: \lambda_i \in [0,h]~\text{for}~ i\in [0, k-1] ~ \text{with}~ \sum_{i=0}^{k-1} \lambda_i =h \right\rbrace.$$
	The \textit{restricted $h$-fold} sumset $h^{\wedge}A$, denoted by $h^{\wedge}A$, is defined as follows: $$h^{\wedge}A :=\left\lbrace\sum_{i=0}^{k-1} \lambda_i a_i: \lambda_i \in \{0,1\}~\text{for}~ i\in [0, k-1] ~ \text{with}~ \sum_{i=0}^{k-1} \lambda_i =h \right\rbrace.$$ 
	It is easy to observe that to define $h^{\wedge}A$ for any finite set of $k$ integers, it is necessary that the set $A$ contains at least $h$ elements. Therefore, in the case of $h^{\wedge}A$, we assume $h\leq k = |A|$, otherwise $h^{\wedge}A = \emptyset$ if $h>k = |A|$. Additionally, we define $0^{\wedge}A=\{0\}$. For integers $a$ and $b$, it is easy to observe that 
	\[h\left(\left(a \ast A\right) + b \right) =(a \ast \left(hA \right) ) + hb \ 
	\text{and}  \ h^{\wedge}\left(\left(a \ast A\right) + b \right) =(a \ast \left(h^{\wedge}A \right) ) + hb.
	\]
	It follows that  $|hA|$ and $|h^{\wedge}A|$  are dilation and translation   invariant of the set $A$ when $A$ is a finite set of integers. Further, in the case of restricted $h$-fold sumset, we have
	\[(k-h)^{\wedge}A = \left(\sum_{i=0}^{k-1}a_{i}\right)-h^{\wedge}A.\] Therefore, 
	\begin{equation}\label{Eq-1}
		|h^{\wedge}A| = |(k-h)^{\wedge}A|.
	\end{equation}
	
	A fundamental problem associated with sumsets involves investigating the structure and characteristics of the sumset, when the set $A$ is provided, which is called the \textit{direct problem}. An \textit{inverse problem} involves inferring properties of the set $A$ based on the properties of its sumset. To access a comprehensive bibliography on these types of problems, one may refer to \cite{Mann, GAF1973, Nathanson1996, TaoVu2006,Cauchy, Davenport,  ElKerPlagne2003, Plagne06-I, Plagne06-II,Erdos-heibronn64, DSH1994,ANR95,ANR96}. Nathanson proved the following direct and inverse theorems for  $hA$ and $h^{\wedge}A$, when $A$ is a finite set of integers.
	\begin{theorem} \textup{\cite[Theorem 1.3, Theorem 1.6]{Nathanson1996}}  \label{Direct and Inverse Thm Nathanson usual}
		Let $h$ and $k$ be positive integers. Let $A$ be a nonempty finite set of $k$
		integers. Then $$|hA|\geq h|A|-h+1.$$ 	This lower bound is best possible. 
		Furthermore, if $h \geq 2$ and $|hA|=h|A|-h+1,$ then $A$ is a $k$-term arithmetic progression.
	\end{theorem}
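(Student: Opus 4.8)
The plan is to prove both the lower bound $|hA| \ge h|A| - h + 1$ and the inverse characterization. Since $|hA|$ is dilation and translation invariant, I would first normalize the set. Writing $A = \{a_0 < a_1 < \cdots < a_{k-1}\}$, I translate so that $a_0 = 0$, reducing to the case where $A$ is a set of nonnegative integers with smallest element $0$. The key observation for the lower bound is that $hA$ contains an explicit chain of strictly increasing elements.

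For the lower bound, the main step is to exhibit $h|A| - h + 1$ distinct elements of $hA$. First I would consider the two extreme sums: the minimal element $ha_0 = 0$ and the maximal element $ha_{k-1}$. The natural strictly increasing chain to write down runs through intermediate sums of the form $\sum_{i} \lambda_i a_i$ where one gradually shifts weight from the smallest available coordinates to the largest. Concretely, starting from $h a_0$, I can produce the increasing sequence obtained by incrementing a single coefficient at a time along the sorted elements, which gives $h(k-1) + 1 = hk - h + 1$ distinct values (for each of the $k-1$ "gaps" $a_{j+1}-a_j$ one can take $h$ steps). Establishing that these are genuinely distinct and all lie in $hA$ is the crux of the direct bound, and it follows because each successive partial sum strictly increases.

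For the inverse problem, I assume equality $|hA| = h|A| - h + 1$ with $h \ge 2$ and aim to show $A$ is an arithmetic progression. The strategy is induction, or a direct structural argument: equality forces that the chain constructed above accounts for \emph{all} of $hA$, leaving no room for any "extra" sums. If $A$ were not an arithmetic progression, then some gap differs, say $a_{j+1} - a_j \ne a_{i+1} - a_i$, and I would argue this produces a sum outside the minimal chain, strictly increasing $|hA|$ beyond $h|A| - h + 1$. A clean way to run this is to use the relation $(h{-}1)A \subseteq hA - a_0$ together with the invariance $|h^\wedge A| = |(k-h)^\wedge A|$ style reductions, or more simply to induct on $h$: the base case $h = 1$ is trivial, and for the inductive step one peels off one copy, using that $hA \supseteq (a_0 + (h-1)A) \cup (a_{k-1} + (h-1)A)$ and counting the overlap.

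I expect the main obstacle to be the inverse direction, specifically showing rigorously that any deviation from an arithmetic progression strictly increases the sumset cardinality. The counting of the explicit chain for the lower bound is routine once the right ordering is chosen, but converting "equality holds" into the rigid conclusion "$A$ is an arithmetic progression" requires carefully tracking which sums coincide and ruling out all non-AP configurations; the delicate point is ensuring the induction's overlap count is tight precisely when consecutive differences are all equal. Since this is Nathanson's classical result quoted as background, I would ultimately cite \cite{Nathanson1996}, but the self-contained sketch above via the monotone chain plus induction on $h$ is the approach I would carry out.
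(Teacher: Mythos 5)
The paper does not prove this theorem at all: it is quoted as background, with the citation to \cite{Nathanson1996} built into the statement, and nothing later in the paper re-derives it. So there is no in-paper proof to compare against; the only question is whether your sketch would stand on its own. The direct half would: the monotone chain of sums $(h-r)a_j + ra_{j+1}$ for $0 \le j \le k-2$ and $0 \le r \le h$ is strictly increasing and contributes exactly $h(k-1)+1 = hk-h+1$ distinct elements of $hA$, which is precisely Nathanson's argument, and your description (one coefficient incremented at a time, $h$ steps per gap) is a correct account of it.

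The inverse half, as you propose to run it, has a genuine gap. The inclusion $hA \supseteq (a_0 + (h-1)A) \cup (a_{k-1} + (h-1)A)$ cannot carry the induction: writing $C = (h-1)A$ and $d = a_{k-1}-a_0 > 0$, the two translates meet in a translate of $C \cap (C+d)$, and the only bound valid in general is $|C \cap (C+d)| \le |C|-1$ (the minimum of $C$ cannot lie in $C+d$). That yields only $|hA| \ge |C|+1$, far weaker than $hk-h+1$; and in the equality case the conclusion is merely that the overlap is huge, which does not by itself rigidify $A$ into an arithmetic progression --- you would be facing an inverse problem about a set largely coinciding with its own translate, which is no easier than the problem you started with. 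The standard repair, and essentially what Nathanson does, is to run the peeling through the \emph{two-set} results: write $hA = A + (h-1)A$, use $|A+B| \ge |A|+|B|-1$ to get the bound by induction on $h$, and in the equality case invoke the two-set inverse theorem (if $|A|,|B| \ge 2$ and $|A+B| = |A|+|B|-1$, then $A$ and $B$ are arithmetic progressions with the same common difference) applied to $B = (h-1)A$, which has at least two elements whenever $k \ge 2$ and $h \ge 2$ (the case $k=1$ being trivial). This both proves the bound and yields the AP conclusion in one stroke, replacing your overlap count, whose tightness you correctly flagged as the delicate point but which in this formulation cannot be made tight. Finally, the identity $|h^{\wedge}A| = |(k-h)^{\wedge}A|$ you mention is a fact about restricted sumsets and plays no role for the ordinary sumset $hA$.
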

	
	\begin{theorem} \textup{\cite[Theorem 1.9, Theorem 1.10]{Nathanson1996}}  \label{Direct Thm Nathanson restricted}
		Let $h$ and $k$ be positive integers with $h \leq k$. Let $A$ be a nonempty finite set of $k$ integers.  Then 
		$$| h^{\wedge}A|  \geq h | A|  - h^{2} + 1.$$
		This lower bound is best possible. Furthermore, if $k\geq 5$, $2\leq h \leq k-2$, and  $|h^{\wedge}A|  = hk - h^{2} + 1,$ then $A$ is a $k$-term arithmetic progression.
	\end{theorem}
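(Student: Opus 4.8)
The statement has two parts, the lower bound (with sharpness) and the inverse characterization, and I would treat them separately.

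For the lower bound, after relabelling assume $a_0 < a_1 < \cdots < a_{k-1}$. I would exhibit a single strictly increasing chain of $h(k-h)+1$ sums inside $h^{\wedge}A$. Starting from the minimal sum $m = a_0 + a_1 + \cdots + a_{h-1}$, I run $h$ successive ``phases'': in phase $t$ (for $1 \le t \le h$) freeze the $t-1$ largest elements $a_{k-t+1}, \ldots, a_{k-1}$ and the $h-t$ smallest elements $a_0, \ldots, a_{h-t-1}$, and let the one remaining summand sweep upward through $a_{h-t}, a_{h-t+1}, \ldots, a_{k-t}$. Each single swap $a_r \mapsto a_{r+1}$ strictly increases the sum, consecutive phases join continuously (the last subset of phase $t$ equals the first subset of phase $t+1$), and phase $t$ contributes $k-h$ new values; hence the chain has $h(k-h)+1 = hk - h^2 + 1$ distinct terms, proving $|h^{\wedge}A| \ge hk - h^2 + 1$. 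Sharpness is immediate by taking $A = [0,k-1]$, for which $h^{\wedge}A$ is exactly the integer interval $\left[\binom{h}{2},\, \binom{h}{2} + h(k-h)\right]$ of cardinality $hk - h^2 + 1$.

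For the inverse part I first normalize. By dilation and translation invariance I may assume $a_0 = 0$ and $d(A) = 1$, and by $(\ref{Eq-1})$ I may also assume $2 \le h \le k/2$; since $k \ge 5$ this still keeps $2 \le h \le k-2$, and the conclusion ``$A$ is an arithmetic progression'' is unaffected by these reductions, so it suffices to show $A = [0,k-1]$. Write $d_i = a_i - a_{i-1} \ge 1$, and let $m = a_0 + \cdots + a_{h-1}$ and $M = a_{k-h} + \cdots + a_{k-1}$ be the least and greatest elements of $h^{\wedge}A$, so that $h^{\wedge}A \subseteq [m,M]$ and therefore $|h^{\wedge}A| \le M - m + 1$. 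A short computation yields the weighted-difference identity
\[ M - m = \sum_{i=1}^{k-1} \min(i,\,h,\,k-i)\, d_i, \]
and since each coefficient $\min(i,h,k-i)$ is a positive integer with $\sum_{i=1}^{k-1}\min(i,h,k-i) = h(k-h)$, we obtain $M - m \ge h(k-h)$, with equality if and only if every $d_i = 1$, i.e. if and only if $A = [0,k-1]$. Thus, under the hypothesis $|h^{\wedge}A| = h(k-h)+1$, the entire theorem reduces to showing that this lower bound is attained, namely $M - m = h(k-h)$.

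Because $|h^{\wedge}A| = h(k-h)+1 \le M - m + 1$ always holds, the equality $M - m = h(k-h)$ is equivalent to $h^{\wedge}A$ filling the whole interval $[m,M]$ with no gaps. The plan is therefore to prove that in the extremal case $h^{\wedge}A = [m,M]$, and I would argue the contrapositive: if some $d_p \ge 2$ (so that $M - m > h(k-h)$), then the basic chain constructed above can be enlarged by at least one further subset sum falling into a gap, contradicting minimality. Concretely I would analyze the sums nearest the two extremes $m$ and $M$, determine which subsets realize the first several values at each end, and use the surplus created by a difference $\ge 2$ to manufacture a subset sum off the chain; the duality $(\ref{Eq-1})$ then lets me treat the bottom and top symmetrically. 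I expect this gap-filling step to be the main obstacle, and it is precisely where the hypotheses $k \ge 5$ and $2 \le h \le k-2$ must be used: for $k = 4$ the set $\{0,1,10,11\}$ has $|2^{\wedge}A| = 5 = hk - h^2 + 1$ yet is not an arithmetic progression, so any correct argument is forced to break down at $k = 4$ and to exploit the extra room available once $k \ge 5$. A natural way to organize this as an induction is to peel off the extreme element $a_{k-1}$, write $h^{\wedge}A = h^{\wedge}C \cup \bigl(a_{k-1} + (h-1)^{\wedge}C\bigr)$ with $C = A \setminus \{a_{k-1}\}$, control the overlap of the two pieces, and apply the inductive hypothesis to $C$.
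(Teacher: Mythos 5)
This statement is quoted verbatim from Nathanson's book (Theorems~1.9 and~1.10 there); the paper itself contains no proof of it, so your proposal has to stand entirely on its own. Its first half does: the phase-by-phase chain of $h(k-h)+1$ strictly increasing subset sums is correct (consecutive phases do share their endpoint subsets, and every single swap $a_r \mapsto a_{r+1}$ strictly increases the sum), and the example $A=[0,k-1]$ settles sharpness. This is in fact the standard proof of the direct inequality. Your reduction of the inverse statement is also sound: the normalization $a_0=0$, $d(A)=1$, the use of \eqref{Eq-1} to assume $h \le k/2$, and the identity $M-m=\sum_{i=1}^{k-1}\min(i,h,k-i)\,d_i$ with coefficient sum $h(k-h)$ correctly reduce the whole theorem to the claim that, in the extremal case, $h^{\wedge}A$ is the full interval $[m,M]$.

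But that claim is exactly where you stop proving and start planning, and this is a genuine gap. Everything after ``The plan is therefore\dots'' is conditional language: you say you \emph{would} analyze the sums near the extremes, \emph{would} manufacture a subset sum off the chain when some $d_p\ge 2$, and \emph{expect} this to be the main obstacle; the proposed induction via $C=A\setminus\{a_{k-1}\}$ and $h^{\wedge}A = h^{\wedge}C \cup \bigl(a_{k-1}+(h-1)^{\wedge}C\bigr)$ is stated but its crucial ingredient --- controlling the overlap of the two pieces and verifying that $C$ inherits the extremal hypothesis $|h^{\wedge}C|=h(k-1)-h^2+1$ so the inductive hypothesis actually applies --- is never carried out. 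The inverse characterization is the substantive half of the theorem: your own example $\{0,1,10,11\}$ at $k=4$ shows the conclusion can fail when there is not enough room, so any proof must do real work with the hypotheses $k\ge 5$ and $2\le h\le k-2$, and no such work appears. As written, the proposal proves the direct inequality and its sharpness but only reduces, without resolving, the inverse statement.
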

	
	An another type of problem associated with sumsets also appeared in the literature called \textit{extended inverse problem}: for a finite set $A$, if the size of its sumset deviates significantly from the minimum size, what conclusions can be drawn about the structure of the set $A$? Freiman was the first who delved in this type of problem and proved the following theorem. 
	\begin{theorem}\textup{\cite[Theorem 1.16]{Nathanson1996}}
		Let $k \geq 3$ be a positive integer.	Let $A$ be a nonempty finite set of $k$ integers. If $$|2A|=2k-1+b \leq 3k-4,$$ then $A$ is a subset of an arithmetic progression of length $k+b \leq 2k-3$.
	\end{theorem}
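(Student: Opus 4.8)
The plan is to use the dilation and translation invariance of $|2A|$ recorded above to normalize $A$, to recast the conclusion as a single sharp lower bound on $|2A|$ in terms of the diameter of $A$, and then to prove that bound by exhibiting explicit elements of the sumset. Translating so that $\min A = 0$ and dividing out the gcd $d(A)$, I may assume
\[
A = \{a_0,a_1,\dots,a_{k-1}\}, \qquad 0 = a_0 < a_1 < \cdots < a_{k-1} = n, \qquad d(A)=1 .
\]
Both $|2A|$ and the property of being contained in an arithmetic progression of a prescribed length are invariant under this normalization, so no generality is lost. Put $b = |2A| - (2k-1) \ge 0$, so the hypothesis reads $b \le k-3$. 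Since $d(A)=1$, every progression containing $A$ has common difference $1$ (its difference must divide $\gcd(a_1,\dots,a_{k-1})=1$), so the conclusion ``$A$ lies in a progression of length $k+b$'' becomes simply $A \subseteq \{0,1,\dots,k+b-1\}$, i.e. $n \le k+b-1$, which is equivalent to $|2A| \ge n+k$. Thus the whole theorem reduces to this inequality under the assumption $b \le k-3$.

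I would deduce $|2A|\ge n+k$ from the following sharp bound, valid for \emph{every} normalized $A$:
\[
|2A| \;\ge\; \min\bigl(n+k,\; 3k-3\bigr). \qquad (\ast)
\]
Granting $(\ast)$, the theorem is immediate: the hypothesis gives $|2A| = 2k-1+b \le 3k-4 < 3k-3$, so the minimum in $(\ast)$ cannot be the branch $3k-3$; hence it equals $n+k$ and $|2A| \ge n+k$, as required. The configurations $A=\{0,1,\dots,k-1\}$ (where $|2A|=2k-1=n+k$) and $A=\{0,1,\dots,k-2,n\}$ with $n$ large (where $|2A|=3k-3$) show that both branches of $(\ast)$ are attained, so a bound of this shape is best possible.

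To prove $(\ast)$ I would build up $2A$ in two layers. The ``$L$-shaped'' family $A \cup (n+A) \subseteq 2A$ is the union of two strictly increasing chains meeting only at $n$, and so already accounts for $2k-1$ distinct elements of $2A\subseteq[0,2n]$. To locate the rest I would write $2A = \bigcup_{i=0}^{k-1}(a_i+A)$ and, processing $i=0,1,\dots,k-1$ in turn, count the elements of $a_i+A$ not seen at earlier stages: the top element $a_i+n$ is always new (its value increases strictly with $i$), which merely recovers the baseline $2k-1$, so the real content is to show that the \emph{second}-order sums contribute at least $\min(n-k+1,\,k-2)$ further elements, for instance by tracking how the chains $a_1+A$ and $a_{k-2}+A$ fill the integer ``gaps'' of $A$ lying in $[0,n]$ and in $[n,2n]$. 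The main obstacle is exactly this count together with the transition between the two branches of the minimum: when the diameter $n$ is moderate essentially every gap is filled, forcing $|2A|$ onto the $n+k$ branch, whereas once $n$ exceeds about $2k-4$ the supply of gaps is exhausted and the count saturates at $3k-3$. Making this dichotomy precise, so as to recover the exact quantity $\min(n-k+1,k-2)$ of new elements rather than a lossy estimate, is where the care lies; the normalization and the reduction via $(\ast)$ are routine by comparison.
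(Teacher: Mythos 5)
Your normalization, and the recasting of the theorem as the single inequality $(\ast)$: $|2A|\ge\min(n+k,\,3k-3)$ for a normalized set, are both correct; this is also exactly how the result is organized in Nathanson's book (the paper under review only cites the theorem and contains no proof of it, so the comparison here is with the standard source). The difficulty is that your reduction is reversible: for a set with $\min(A)=0$, $d(A)=1$, $\max(A)=n$, the conclusion of Freiman's theorem is \emph{equivalent} to $(\ast)$, as your own argument shows when run backwards. So $(\ast)$ is not an auxiliary lemma that the theorem reduces to --- it \emph{is} the theorem, and it is precisely the part you leave unproved, conceding that making the count of ``new'' elements precise ``is where the care lies.'' As written, nothing beyond the trivial bound $|2A|\ge 2k-1$ (the $2k-1$ elements of $A\cup(n+A)$) has actually been established.

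Moreover, the mechanism you propose for the missing $\min(n-k+1,\,k-2)$ elements --- tracking the chains $a_1+A$ and $a_{k-2}+A$ --- fails on concrete examples, so this is a gap of substance, not of exposition. Take $A=[0,j]\cup[n-k+j+2,\,n]$ with $2\le j\le k-4$ and $n$ large; then $d(A)=1$, $|A|=k$, and $|2A|=3k-3$, so $(\ast)$ is tight. Here $A\cup(n+A)$ gives $2k-1$ elements, but $a_1+A$ contributes only the single new element $j+1$ and $a_{k-2}+A=(n-1)+A$ only the single new element $2n-k+j+1$: two new elements where $k-2$ are required. The remaining $k-4$ elements of $2A$ are the sums $[j+2,2j]$ and $[2n-2k+2j+4,\,2n-k+j]$ coming from doubling each block separately, which your two designated chains never see. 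Any correct proof of $(\ast)$ must therefore handle such two-block configurations by a genuinely different argument --- Freiman's original counting argument (split into the regimes $n\le 2k-3$ and $n\ge 2k-2$, as in Nathanson's Theorems 1.14 and 1.15), or the projection of $A$ into $\mathbb{Z}/n\mathbb{Z}$ followed by a Cauchy--Davenport/Kneser-type theorem. Until one of these is supplied, the proposal proves nothing beyond its (correct) normalization step.
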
 
	
	\noindent Numerous results concerning the extended inverse theorem on abelian groups were studied by Kemperman\cite{Kemperman60} and Grynkiewicz\cite{Grynkiewicz2009}. Lev\cite{Lev1999} generalized Freiman's theorem for regular $h$-fold sumsets $hA$. Building on Lev's results, Tang and Xing\cite{TangXing2021}, and Mohan and Pandey \cite{MohanPandey2023-I}  studied extended inverse results for $hA$ in a more precise manner.	 Only a few extended inverse results are known for $h^{\wedge}A$. Freiman proposed the following conjecture, and it was independently suggested by Lev\cite{VFLev2000}.
	
	
	\begin{conjecture}\textup{\cite[Conjecture 1]{VFLev2000}}\label{Conjecture 1}
		Let  $A=\lbrace a_{0}, a_{1}, \ldots, a_{k-1}\rbrace$ be a set of $k>7$ integers such that
		$0=a_{0}<a_{1}< \cdots < a_{k-1}$ with $d(A)=1$.  Then
		\[
		\left|2^{\wedge}A\right| \geq 
		\begin{cases}
			a_{k-1} + k-2, &  a_{k-1} \leq 2k-5; \\
			3k-7,   &  a_{k-1} \geq 2k-4.
		\end{cases}
		\]
	\end{conjecture}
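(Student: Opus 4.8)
The plan is to transfer Freiman's classical estimate for the ordinary sumset to the restricted one and then to account for the difference between the two. I would keep the normalization already in force, $a_0=0$ and $d(A)=1$, and note that the two displayed cases are the two branches of the single inequality $\abs{2^{\wedge}A}\ge\min\{a_{k-1}+k-2,\,3k-7\}$, the branches crossing exactly at $a_{k-1}=2k-5$. The model for this is the ordinary bound $\abs{2A}\ge\min\{a_{k-1}+k,\,3k-3\}$, which I would first read off from the Freiman $3k-4$ theorem quoted above: because $d(A)=1$ and $a_0=0$, every progression containing $A$ has common difference $1$ and hence length at least $a_{k-1}+1$, so if $\abs{2A}=2k-1+b\le 3k-4$ then $a_{k-1}+1\le k+b$ and therefore $\abs{2A}\ge a_{k-1}+k$, while if $\abs{2A}>3k-4$ then $\abs{2A}\ge 3k-3$.

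The bridge between $2A$ and $2^{\wedge}A$ is the diagonal $D=\{2a_i:0\le i\le k-1\}$. Since $2A=2^{\wedge}A\cup D$ and the $2a_i$ are pairwise distinct, one has the exact identity $\abs{2A}=\abs{2^{\wedge}A}+t$, where $t=\#\{i:2a_i\notin 2^{\wedge}A\}$ is the number of elements of $A$ that are \emph{not} the midpoint of two distinct elements of $A$. The two extreme elements $a_0$ and $a_{k-1}$ are always such non-midpoints, so $t\ge 2$, and when $t=2$ (arithmetic progressions, and more generally every set all of whose interior points are midpoints) the Freiman bound gives at once $\abs{2^{\wedge}A}=\abs{2A}-2\ge\min\{a_{k-1}+k-2,\,3k-5\}$, which already implies the conjecture. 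Thus everything reduces to controlling $t$, and substituting the identity shows that it suffices to prove the increment inequality $\abs{2A}\ge\min\{a_{k-1}+k,\,3k-3\}+(t-2)$, i.e. that the excess of $\abs{2A}$ over the Freiman minimum is at least the number $t-2$ of \emph{interior} non-midpoints.

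I expect this increment inequality to be the main obstacle; it is a sharpened form of Freiman's theorem asserting that each interior point of $A$ that fails to be a midpoint costs at least one additional element of $2A$. It is consistent with, and tight on, the natural extremal families: for $A=\{0,1,\dots,k-2,n\}$ one computes $\abs{2^{\wedge}A}=\min\{n+k-2,\,3k-6\}$ with $t\in\{2,3\}$, while for a set such as $\{0,3,4,5,7,8,10,11,13\}$ (here $k=9$, $a_{k-1}=2k-5$) the two interior non-midpoints $3,11$ are matched by an excess of exactly $2$, giving $\abs{2^{\wedge}A}=3k-7$ on the nose. The delicate points in proving it are (i) ruling out double counting when non-midpoints cluster or sit near the ends, so that the increments genuinely add up, and (ii) recovering the exact additive constants rather than a bound weaker by an unspecified amount; a natural line of attack is to run Freiman's compression/progression-length argument while tracking, for each interior non-midpoint $a_i$, a sum forced into $2A$ that lies outside the progression-length count.

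As an independent handle on the spread-out branch I would use the peeling identity $2^{\wedge}A=2^{\wedge}A'\cup(a_{k-1}+A')$ with $A'=\{a_0,\dots,a_{k-2}\}$, which gives $\abs{2^{\wedge}A}=\abs{2^{\wedge}A'}+\#\{i:a_{k-1}+a_i>\max 2^{\wedge}A'\}$; the count of genuinely new top sums increases with how far $a_{k-1}$ protrudes beyond $a_{k-3}+a_{k-2}$, which is exactly the mechanism converting diameter into cardinality and which would drive an induction on $k$. The bookkeeping hazards here are that $d(A')$ may exceed $1$ and force a rescaling that changes the active branch of the induction hypothesis, and that the regime switch at $a_{k-1}\approx 2k-4$ must be handled so that the interval $[a_1,\,a_{k-1}+a_{k-2}]$ is shown to carry the full $\min\{a_{k-1}+k-2,\,3k-7\}$ sums. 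Matching these constants exactly, in either approach, is where the real difficulty lies and is presumably why the statement has remained open.
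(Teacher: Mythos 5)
First, a point of order: this statement is a \emph{conjecture} in the paper (Conjecture~\ref{Conjecture 1}, due to Freiman and Lev), not a theorem. The paper proves nothing about it; it only records Lev's partial result (Theorem~\ref{Lev_restricted_thm}, which gives $(\theta+1)k-6$ in place of $3k-7$ in the spread-out regime) and cites Daza et al.\ as having ``almost settled'' it. So there is no proof in the paper to compare yours against, and your proposal must be judged on its own terms. On those terms it is not a proof: both of your approaches end with the key step unestablished --- you say yourself that the ``increment inequality'' is the main obstacle, and that the peeling induction has unhandled bookkeeping hazards (the $d(A')>1$ rescaling and the branch switch).

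The gap is worse than an omission: the increment inequality $|2A|\ge\min\{a_{k-1}+k,\,3k-3\}+(t-2)$, to which you reduce the conjecture, is false. Take the extremal family displayed immediately after the conjecture in the paper, $A=\{0,1,\ldots,k-3\}\cup\{n-1,n\}$ with $n=a_{k-1}\ge 2k-4$. Here $2^{\wedge}A=[1,2k-7]\cup[n-1,n+k-3]\cup\{2n-1\}$ has size $3k-7$, and $2A$ contains in addition exactly the four doubles $0$, $2k-6$, $2n-2$, $2n$; hence $t=4$ and $|2A|=3k-3$. Your inequality would demand $|2A|\ge\min\{n+k,\,3k-3\}+2\ge 3k-2$, a contradiction. (Your tightness check used the one-outlier family $\{0,1,\ldots,k-2,n\}$, for which $t\le 3$; it is precisely the two-outlier family --- the one that makes the bound $3k-7$ sharp --- that has $t=4$ and breaks the inequality.) The only correct weakening, $|2A|\ge\min\{a_{k-1}+k-2,\,3k-7\}+t$, is, via your identity $|2A|=|2^{\wedge}A|+t$, word-for-word the conjecture again, so the diagonal-counting reduction buys nothing: all of the difficulty remains. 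This is consistent with the history the paper records --- the known partial results (Freiman--Low--Pitman, Lev's golden-ratio bound) lose exactly the kind of additive constants that your sketch hopes to keep, and recovering them required the substantially different machinery of Daza et al.
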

	\noindent The lower bounds are optimal, by taking $A = \{0, 1, \ldots , k-3\} \cup  \{a_{k-1} - 1, a_{k-1}\}$, resulting in $2^{\wedge}A = \{1, 2, \ldots , 2k-7\} \cup \{a_{k-1} - 1, \ldots , a_{k-1}+ k-3\} \cup \{2a_{k-1} - 1\}$. Freiman et al. \cite{FreimanLowPit} were the first who did some work in the direction of proving  Conjecture \ref{Conjecture 1}. Later, Lev  \cite{VFLev2000} improved Freiman et al. \cite{FreimanLowPit} result by proving the following theorem.
	\begin{theorem}\textup{\cite[Theorem 1.1]{VFLev2000}}\label{Lev_restricted_thm}
		Let  $A=\lbrace a_{0}, a_{1}, \ldots, a_{k-1}\rbrace$ be a set of $k \geq 3$ integers such that
		$0=a_{0}<a_{1}< \cdots < a_{k-1}$ with $d(A)=1$.  Then
		\[
		\left|2^{\wedge}A\right| \geq  
		\begin{cases}
			a_{k-1} + k-2, &  a_{k-1} \leq 2k-5; \\
			(\theta +1)k-6,   &   a_{k-1} \geq 2k-4.
		\end{cases}
		\]
		where $\theta = \dfrac{(1+\sqrt{5})}{2}$ is the 'golden mean'.
	\end{theorem}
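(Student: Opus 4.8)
The plan is to keep the given normalisation $a_{0}=0$, $d(A)=1$ and to write $s:=a_{k-1}$, so that $A\subseteq[0,s]$ with $0,s\in A$. The baseline is the bound $|2^{\wedge}A|\ge 2k-3$, which is Theorem~\ref{Direct Thm Nathanson restricted} with $h=2$ and which I would re-derive concretely from the two increasing chains $a_{0}+a_{1}<\cdots<a_{0}+a_{k-1}$ (lying in $[1,s]$) and $a_{1}+a_{k-1}<\cdots<a_{k-2}+a_{k-1}$ (lying in $(s,2s)$); these are disjoint and contribute $2k-3$ elements. Since equality $s=k-1$ forces $A$ to be an arithmetic progression with $|2^{\wedge}A|=2k-3=s+k-2$, the two regimes $s\le 2k-5$ and $s\ge 2k-4$ precisely quantify how far $A$ is from a progression, and I would treat them separately.

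In the dense case $s\le 2k-5$ the target $s+k-2$ exceeds the baseline by exactly $s-(k-1)=\bigl|[1,s]\setminus A\bigr|$, the number of gaps of $A$ inside $[1,s]$. I would therefore prove an augmentation statement: each integer missing from $A$ inside $[1,s]$ forces an element of $2^{\wedge}A$ distinct from the $2k-3$ baseline sums, and these new sums can be produced injectively. Running through the gaps from the bottom and, at a gap lying between consecutive $a_{j}<a_{j+1}$, exhibiting a sum such as $a_{j-1}+a_{j+1}$ that falls strictly between two consecutive baseline values, reduces the case to the bookkeeping that rules out collisions among the new sums and against the two chains. A cleaner alternative I would keep in reserve is induction on $k$ by deleting a carefully chosen element while preserving $d=1$ and tracking the resulting change in $s$.

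The substance is the spread case $s\ge 2k-4$, where the golden mean appears. Here $A$ must contain a long gap, and I would split $A$ at a threshold into a low block $A_{1}=A\cap[0,u]$ and a high block $A_{2}=A\cap[v,s]$ with $u=\max A_{1}$, $v=\min A_{2}$, and sizes $p,q$ with $p+q=k$. The three families $2^{\wedge}A_{1}$, $A_{1}+A_{2}$, $2^{\wedge}A_{2}$ all lie in $2^{\wedge}A$, with respective sizes at least $2p-3$, at least $p+q-1=k-1$ (the elementary bound $|A_{1}+A_{2}|\ge|A_{1}|+|A_{2}|-1$), and at least $2q-3$. Crucially, if these three families occupied disjoint intervals the counts would add to $(2p-3)+(k-1)+(2q-3)=3k-7$, which is exactly the value in the optimal construction $\{0,\dots,k-3\}\cup\{s-1,s\}$ and would settle Conjecture~\ref{Conjecture 1}. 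Disjointness holds when $2u<v$ and $u+s<2v$, and I would choose the split point to make these as close to satisfied as the gap structure of $A$ permits, using the reflection $A\mapsto s-A$ (which preserves $|2^{\wedge}A|$, since $2^{\wedge}(s-A)=2s-2^{\wedge}A$) to assume the heavier block sits where the recursion is most efficient.

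The main obstacle is that these three ranges cannot in general be forced apart---if they always could, one would prove the full Conjecture~\ref{Conjecture 1}, which remains open---so the real work is to quantify the overlaps $2^{\wedge}A_{1}\cap(A_{1}+A_{2})$ and $(A_{1}+A_{2})\cap 2^{\wedge}A_{2}$ in terms of the gap length $v-u$ and to feed that loss into a recursion of the form $f(k)\ge f(p)+f(q)+(k-1)-(\text{overlap})$ rather than simply summing the three counts. Solving this optimised recursion, the extremal case is a self-similar configuration in which the splitting ratio satisfies $\theta^{2}=\theta+1$; this is the origin of the coefficient $\theta+1$, and pinning the additive constant by small base cases yields $(\theta+1)k-6$. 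I expect two further technical points to need care: verifying the recursion's base cases and the matching of the two regimes at the boundary $s\approx 2k-4$, and, as an auxiliary input, combining the trivial relation $|2A|\le|2^{\wedge}A|+k$ with Freiman's $3k-4$ theorem, which guarantees that $|2A|$ is large exactly in the spread regime and so reinforces the recursive lower bound where it is weakest.
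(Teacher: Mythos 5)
The paper never proves this statement: it is imported verbatim from Lev \cite{VFLev2000} and used as a black box (in the proof of Theorem \ref{Lemma 1}), so there is no internal proof to compare yours against; the only question is whether your blind attempt stands on its own, and it does not. In the dense case $a_{k-1}\le 2k-5$, the assertion that each of the $a_{k-1}-(k-1)$ gaps of $A$ in $[1,a_{k-1}]$ injectively contributes a sum outside the two baseline chains \emph{is} the theorem in that regime, not a reduction of it: the proposed witness $a_{j-1}+a_{j+1}$ need not fall between consecutive baseline values, distinct gaps can produce colliding witnesses, and the extremal set $\{0,\ldots,k-3\}\cup\{a_{k-1}-1,a_{k-1}\}$ shows the extra sums do not arise one-per-gap from local configurations but from a whole block interacting with the two top elements. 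The ``bookkeeping'' you defer is the entire argument.

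The spread case $a_{k-1}\ge 2k-4$ is where the golden mean, and all the difficulty, lives, and there the proposal assumes its conclusion. You set up the inclusion $2^{\wedge}A_{1}\cup(A_{1}+A_{2})\cup 2^{\wedge}A_{2}\subseteq 2^{\wedge}A$ and the recursion $f(k)\ge f(p)+f(q)+(k-1)-(\text{overlap})$, but you never establish any quantitative bound on the overlap terms; without one, the recursion is vacuous, since a priori the overlaps can swallow the whole contribution $k-1$. Declaring that ``solving this optimised recursion'' produces an extremal ratio satisfying $\theta^{2}=\theta+1$ and hence the constant $\theta+1$ is precisely the step that needs proof: in Lev's paper the constant emerges from an explicit induction with controlled overlap estimates, and the unoptimised three-block count reaches $3k-7$ only in the disjoint case, which, as you yourself note, cannot be forced (otherwise Conjecture \ref{Conjecture 1} would follow). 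Your auxiliary input is also too weak to rescue this: Freiman's $3k-4$ theorem combined with $|2^{\wedge}A|\ge|2A|-k$ yields only a bound of order $2k$ in the spread regime, well short of $(\theta+1)k-6\approx 2.618\,k$. In short, the proposal correctly identifies the architecture of Lev's proof (split, recurse, optimise) but leaves both substantive steps---the injective augmentation in the dense case and the overlap-controlled recursion in the spread case---unproved.
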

	\noindent Recently, Conjecture \ref{Conjecture 1} almost settled by Daza et al. \cite{Daza2023}. Using the above theorem, Mohan and Pandey\cite{Mohan_Pandey_2023_restricted_sumset} studied  the following extended inverse theorems.
	\begin{theorem}\textup{\cite[Theorem 1.8]{Mohan_Pandey_2023_restricted_sumset}}\label{Extended inverse thm 2 fold}
		Let $k \geq 7$ be a positive integer. Let $A$ be a finite set of $k$ nonnegative integers with  $\min(A) =0$ and $d(A) = 1$. Then
		\begin{enumerate}
			\item[\upshape(1)]  $\left|2^{\wedge}A\right|= 2k-2$ if and only if $A = [0,k] \setminus \{x\}$, where $x \in \{ 1,2,k-2,k-1\}$;
			
			\item[\upshape(2)] for $k \geq 9$, $\left|2^{\wedge}A\right|= 2k-1$ if and only if  $A=[0,k+1] \setminus\{ x,y\}$, where $\{x,y\}$ is one of the sets  $\{1,2\}$, $\{k-1,k\}$, $\{2,3\}$, $\{k-2,k-1\}$,  $\{1,3\}$, $\{k-2,k\}$,  $\{1,4\}$, $\{k-3,k\}$, $\{1,k\}$, $\{1,k-1\}$, $\{2,k\}$, $\{2,k-1\}$, and $\{i,k+1\}$ where $3 \leq i \leq k-3$;
			
			\item[\upshape(3)] for $k \geq 11$, $\left|2^{\wedge}A\right|= 2k$ if and only if  $A=[0,k+2] \setminus\{ x,y,z\}$, where $\{x,y,z\}$ is one of the sets  $\{3,4,k+2\}$, $\{k-3,k-2, k+2\}$,   $\{2,4,k+2\}$,  $\{k-3,k-1, k+2\}$,  $\{2,5,k+2\}$,  $\{k-4,k-1, k+2\}$,   $\{1,2,3\}$, $\{k-1,k,k+1\}$,  $\{2,3,4\}$, $\{k-2,k-1,k\}$,  $\{1,2,4\}$, $\{k-2,k,k+1\}$, $\{1,2,k+1\}$, $\{1,k,k+1\}$, $\{1,3,4\}$, $\{k-2,k-1,k+1\}$,   $\{1,2,5\}$,  $\{k-3,k,k+1\}$,  $\{1,2,k\}$, $\{2,k,k+1\}$, $\{2,3,k\}$, $\{2,k-1,k\}$,   $\{1,2,6\}$, $\{k-4,k,k+1\}$, $\{2,3,k+1\}$, $\{1,k-1,k\}$, $\{1,3,5\}$, $\{k-3,k-1,k+1\}$, $\{1,3,k\}$, $\{2,k-1,k+1\}$, $\{1,3,k+1\}$, $\{1,k-1,k+1\}$,  $\{1,4,6\}$, $\{k-4,k-2,k+1\}$, $\{1,4,k\}$, $\{2,k-2,k+1\}$, $\{1,4,k+1\}$,  $\{1,k-2,k+1\}$, $\{i,j,k+2\}$ where $i \in \{1,2\}$ with $i+4 \leq j \leq k-2$, and $\{i,j, k+2\}$ where $3 \leq i  \leq j-4$ with $j \in \{k-1,k\} $. 
		\end{enumerate}
	\end{theorem}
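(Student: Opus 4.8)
The plan is to combine the sharp lower bound of Theorem \ref{Lev_restricted_thm} with a direct analysis of the ``missing sums'' that occur when $A$ is a subset of an interval. Write $0 = a_0 < a_1 < \cdots < a_{k-1} = m$. First I would apply Theorem \ref{Lev_restricted_thm} to bound $m$. If $m \ge 2k-4$, then $|2^{\wedge}A| \ge (\theta+1)k - 6$, and a short computation shows that $(\theta+1)k - 6$ exceeds $2k-2$, $2k-1$, and $2k$ precisely when $k \ge 7$, $k \ge 9$, and $k \ge 11$, respectively; this is exactly where the three hypotheses on $k$ come from, and it rules out the range $m \ge 2k-4$. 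Hence $m \le 2k-5$, and the first branch of Theorem \ref{Lev_restricted_thm} gives $a_{k-1} + k - 2 \le |2^{\wedge}A|$, forcing $m \le k$, $m \le k+1$, $m \le k+2$ in the three cases. Since $A \subseteq [0,m]$ contains $0$ and $m$ and has $k$ elements, I would write $A = [0,m] \setminus S$ with $0, m \notin S$ and $|S| = r := m+1-k \in \{0,1,2,3\}$; the case $r=0$ is the arithmetic progression with $|2^{\wedge}A| = 2k-3$ and is discarded. (That $d(A)=1$ holds is automatic, since removing so few elements from a long interval always leaves two consecutive integers.)

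Next I would reformulate the target size as a covering count. Because $2^{\wedge}[0,m] = [1, 2m-1]$, we have $2^{\wedge}A \subseteq [1,2m-1]$, and a sum $s$ fails to lie in $2^{\wedge}A$ exactly when every representation $s = a+b$ with $0 \le a < b \le m$ meets $S$; call such an $s$ \emph{killed}. Then $|2^{\wedge}A| = (2m-1) - \#\{\text{killed } s\}$, so the requirement $|2^{\wedge}A| \in \{2k-2, 2k-1, 2k\}$ becomes the requirement that $S$ kill exactly $2r-1$, $2(r-1)$, or $2r-3$ sums, respectively. The key structural observation is that the number of representations of $s$ in $[0,m]$ is $\lfloor (s+1)/2 \rfloor$ for $s \le m$ and is symmetric about $m$, so a sum with more than $r$ representations can never be killed by removing $r$ elements. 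Consequently every killed sum lies in $[1,2r] \cup [2m-2r, 2m-1]$; since the hypotheses on $k$ guarantee $m > 4r$, these two windows are disjoint, only removed elements near $0$ or near $m$ can kill anything, and the bottom and top contributions are independent. The reflection $x \mapsto m-x$ interchanges the two windows and preserves kill-counts, which halves the casework.

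It then remains to determine, for each $r \in \{1,2,3\}$, every placement of the removed elements near an endpoint together with its kill-count, and to combine the bottom and top contributions so as to meet the exact target. For $r=1$ this is immediate: a single removed $x$ kills exactly one sum iff $x \in \{1,2,m-2,m-1\}$ and kills none otherwise, which yields case (1) and also the ``interior'' subfamily of case (2). For $r = 2, 3$ I would analyze how clusters of removed elements cover the low (resp.\ high) representations. The delicate families come from \emph{overlap} phenomena: for instance removing $\{1,2,6\}$ kills the extra sum $6$ because its three representations $\{0,6\}, \{1,5\}, \{2,4\}$ are simultaneously covered, whereas removing $\{1,2,7\}$ leaves $\{3,4\}$ uncovered and so does not kill $7$. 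Tracking exactly when such coincidences occur is what produces the arithmetic constraints (e.g.\ $i+4 \le j$, or $j \le k-2$ to avoid an accidental kill near the top) appearing in the stated families, and assembling all admissible bottom/top configurations gives the complete lists in (2) and (3).

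The hard part will be Step~4 in case (3), where $r=3$: one must enumerate all ways three removed elements distribute between the two endpoint windows, carefully account for the overlap-induced extra kills, and verify that each surviving configuration produces \emph{exactly} three killed sums---neither more nor fewer. It is precisely this ``exactly three'' bookkeeping, sensitive to every near-endpoint coincidence, that generates the long and tightly constrained list of admissible triples, and getting the boundary constraints on the parametrized families right is where the real work lies.
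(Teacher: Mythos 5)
A point of context first: the statement you were asked to prove is Theorem \ref{Extended inverse thm 2 fold}, which this paper does not prove at all --- it is quoted from Mohan and Pandey \cite{Mohan_Pandey_2023_restricted_sumset} as background for the $h\geq 3$ results. So the fair comparison is with the method the paper uses for its own analogues (Theorem \ref{Lemma 1} through Theorem \ref{3 element EIT gen}): first a containment $A \subseteq [0,k+r-1]$ derived from Lev's bound (Theorem \ref{Lev_restricted_thm}) together with Nathanson's direct theorem, and then a long battery of propositions computing $|h^{\wedge}A|$ exactly for every deletion pattern, from which the admissible patterns are read off. Your first stage is essentially identical (both branches of Theorem \ref{Lev_restricted_thm} force $a_{k-1}\leq k$, $k+1$, $k+2$ in the three cases), but your second stage is genuinely different and, for $h=2$, cleaner: instead of computing $2^{\wedge}A$ family by family, you count the complement in $[1,2m-1]$ via ``killed sums,'' use the fact that distinct representations of a sum are pairwise disjoint pairs (so a sum with more than $r=m+1-k$ representations cannot be killed), localize all killed sums to $[1,2r]\cup[2m-2r,2m-1]$, and use $m>4r$ plus the reflection $x\mapsto m-x$ to make the two windows independent and halve the casework. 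I checked your kill-count targets ($2r-1$, $2(r-1)$, $2r-3$), the localization, and several families (e.g.\ $\{1,4,6\}$, $\{1,2,6\}$ versus $\{1,2,7\}$, and the boundary constraint $j\leq k-2$ in the $\{i,j,k+2\}$ family, which prevents an extra kill at $2m-2$); all are correct, and your framework does reproduce the stated lists. What the paper's method buys is uniformity in $h$ (the same proposition machinery works for all $h\geq 3$); what yours buys is that the $h=2$ enumeration collapses to a finite check over a window of size at most $6$ and its mirror image.

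Two caveats. First, your claim that $(\theta+1)k-6 > 2k$ ``precisely when'' $k\geq 11$ is off: it already holds for $k\geq 10$; the hypothesis $k\geq 11$ is genuinely needed only for your independence condition $m>4r$ when $r=3$, which you also invoke, so the argument is sound but the attribution is inconsistent. Second, your write-up stops short of actually executing the $r=2$ and $r=3$ enumerations, and that exhaustive check \emph{is} the content of parts (2) and (3); your framework makes it mechanical and finite, but as written the theorem is a plan plus verified examples, not yet a complete proof.
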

	
	\begin{theorem}\textup{\cite[Theorem 1.9, Theorem 1.10]{Mohan_Pandey_2023_restricted_sumset}}\label{Mohan-pandey-extended  inverse thm 3^A}
		Let $k \geq 10$ be a positive integer.  Let $A$ be a finite set of $k$ integers with $\min(A) = 0$ and $d(A) = 1$. Then
		\begin{enumerate}
			\item [\upshape(1)]  $\left| 3^{\wedge}A \right| = 3k-7$ if and only if $A = [0,k] \backslash \{x\}$, where $x \in \{1,k-1\}$;
			
			\item [\upshape(2)] for $k \geq 12$, $\left| 3^{\wedge}A \right| = 3k-6$ if and only if $A = [0,k+1] \backslash \{x,y\}$, where $ \{x,y\}$ is one of the sets $\{2,k+1\}$, $\{3,k+1\}$, $\{k-3,k+1\}$,  $\{k-2,k+1\}$, $\{1,2\}$, $\{k-1,k\}$, and $\{1,k\}$;
			
			\item[\upshape(3)] for $k \geq 12$,  $\left| 4^{\wedge}A \right| = 4k-14$ if and only if $A = [0,k] \backslash \{x\}$, where $x \in \{1,k-1\}$.
			
		\end{enumerate} 
	\end{theorem}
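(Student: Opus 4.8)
The plan is to prove each biconditional by verifying the easy ``if'' direction through direct computation and then establishing the ``only if'' direction by a diameter reduction followed by a finite case analysis. For the forward direction, every candidate $A$ is an interval $[0,n]$ (with $n=k$ or $n=k+1$) minus one, two, or three points, so $h^{\wedge}A$ is an interval minus a few points near its ends; computing $\min h^{\wedge}A$ and $\max h^{\wedge}A$ and checking that no interior value is missing yields the stated cardinality. By Theorem~\ref{Direct Thm Nathanson restricted} the minimum of $|3^{\wedge}A|$ is $3k-8$ and of $|4^{\wedge}A|$ is $4k-15$, so the target values $3k-7,\,3k-6$ and $4k-14$ sit just one or two above the minimum; since the minimum is attained only by arithmetic progressions, that extremal case is disposed of at once.

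For the ``only if'' direction I first normalise $0=a_0<a_1<\cdots<a_{k-1}$ with $d(A)=1$ and record the reflection symmetry $|h^{\wedge}A|=|h^{\wedge}(a_{k-1}-A)|$ coming from dilation/translation invariance, which sends $[0,n]\setminus\{x\}$ to $[0,n]\setminus\{n-x\}$ and so halves the eventual casework (this is exactly what produces the paired answers $x\in\{1,k-1\}$, and the reflected pairs in part (2)). The core step is a \emph{diameter bound}: I claim $|3^{\wedge}A|\ge a_{k-1}+2k-7$ in the relevant range, so that $|3^{\wedge}A|=3k-7$ forces $a_{k-1}\le k$ and $|3^{\wedge}A|=3k-6$ forces $a_{k-1}\le k+1$. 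To prove it I set $B=A\setminus\{0\}$ and use the partition $3^{\wedge}A=2^{\wedge}B\cup 3^{\wedge}B$ according to whether $0$ is used; since $2^{\wedge}B\subseteq 3^{\wedge}A$ occupies the lower part while the sums $a_j+a_{k-2}+a_{k-1}$ $(1\le j\le k-3)$ lie strictly above $\max 2^{\wedge}B=a_{k-2}+a_{k-1}$, I obtain $|3^{\wedge}A|\ge |2^{\wedge}B|+(k-3)$, and then feed in Lev's estimate (Theorem~\ref{Lev_restricted_thm}, equivalently Conjecture~\ref{Conjecture 1}) for $|2^{\wedge}B|$ in terms of $\mathrm{diam}(B)=a_{k-1}-a_1$. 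The case $h=4$ is handled by the same device, partitioning $4^{\wedge}A=3^{\wedge}B\cup 4^{\wedge}B$ and bootstrapping from the $h=3$ bound; here $|4^{\wedge}A|=4k-14$ forces $a_{k-1}\le k$.

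Once the diameter is pinned down, $A$ is forced to be $[0,a_{k-1}]$ minus a bounded number of points: for $|3^{\wedge}A|=3k-7$ one has $a_{k-1}=k$ (the value $k-1$ gives a full arithmetic progression, excluded by Theorem~\ref{Direct Thm Nathanson restricted}), so $A=[0,k]\setminus\{x\}$ with $1\le x\le k-1$; for $|3^{\wedge}A|=3k-6$ one has $a_{k-1}\in\{k,k+1\}$, i.e. $A=[0,k]\setminus\{x\}$ or $A=[0,k+1]\setminus\{x,y\}$; and for $|4^{\wedge}A|=4k-14$ again $A=[0,k]\setminus\{x\}$. In each remaining configuration I compute $|h^{\wedge}A|$ explicitly as a function of the deleted point(s), tracking precisely which sums near the two ends of the interval are destroyed by each deletion, and retain only those deletions giving the target cardinality, invoking the reflection symmetry to treat only $x\le n/2$.

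I expect the diameter bound to be the main obstacle. The clean inequality $|3^{\wedge}A|\ge a_{k-1}+2k-7$ needs $a_1=1$, $d(B)=1$, and $\mathrm{diam}(B)$ to lie in the range where Lev's estimate is sharp; when these side conditions fail (for instance $1\notin A$, or $B$ carries a nontrivial common divisor) I will need auxiliary arguments, namely passing to the reflected set, extracting the actual common-divisor structure, or settling a few small configurations directly, together with a careful treatment of the overlap $2^{\wedge}B\cap 3^{\wedge}B$, which the crude inclusion $|3^{\wedge}A|\ge |2^{\wedge}B|+(k-3)$ deliberately sidesteps. The ensuing case analysis, though lengthy, especially the two-deletion families in part (2) with $a_{k-1}=k+1$, is routine interval bookkeeping; the hypotheses $k\ge 10$ and $k\ge 12$ are precisely what keep the lower and upper end effects of $h^{\wedge}A$ from interfering.
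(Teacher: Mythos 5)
You should first be aware of a structural point: the paper never proves this statement at all --- it is quoted from the earlier Mohan--Pandey paper \cite{Mohan_Pandey_2023_restricted_sumset} as background. What the present paper proves are generalizations that subsume it: Theorem~\ref{one element EIT} with $h=3$ and $h=4$ recovers parts (1) and (3), and Theorem~\ref{two element EIT h=3} recovers part (2), and their proofs consist of the diameter bound of Theorem~\ref{Lemma 1} followed by the exhaustive interval-minus-points computations of the propositions in Section~\ref{Proofs}. Your plan has exactly that architecture: embed a restricted $2$-fold sumset into $h^{\wedge}A$, invoke Lev's Theorem~\ref{Lev_restricted_thm} to force $a_{k-1}\le k$ (resp.\ $k+1$), then finish by finite case analysis using the reflection $A\mapsto a_{k-1}-A$. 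So in spirit you have reconstructed the paper's method, and your ``if'' direction and your reduction to $A=[0,k]\setminus\{x\}$ or $[0,k+1]\setminus\{x,y\}$ are sound.

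The genuine gap is the step you yourself flag as the main obstacle, and it is the crux rather than a side issue. You apply Lev's theorem to $B=A\setminus\{0\}$, but Lev's bound requires a set normalized to have minimum $0$ and gcd $1$; after translating by $a_1$ the quantity $d(B-a_1)$ is uncontrolled (and when $a_1\ge 2$ even the clean inequality $|3^{\wedge}A|\ge a_{k-1}+2k-7$ fails as stated), and your text only promises ``auxiliary arguments'' at this point. The paper's proof of Theorem~\ref{Lemma 1} is organized around removing exactly this difficulty: the auxiliary set is chosen as $A_1=\{a_0,a_{h-1},a_h,\ldots,a_{k-1}\}$, which contains $0$ so that no translation is needed, and the Claim $d(A_1)=1$ is proved by contradiction --- if the gcd exceeded $1$, one finds $t<h-1$ with $d(A_2)=1$ and $d(A_3)>1$, so that $(h-1)^{\wedge}A_3+a_t$ and $h^{\wedge}A_3$ are \emph{disjoint} subsets of $h^{\wedge}A$, and Theorem~\ref{Direct Thm Nathanson restricted} then forces $|h^{\wedge}A|>hk-h^2+4$, a contradiction. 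An analogous disjointness argument would repair your decomposition (if $2^{\wedge}B$ and $3^{\wedge}B$ lie in distinct residue classes modulo a common divisor, their union is already too large), and the case $1\notin A$ likewise needs an explicit argument rather than a deferral; until these are written out, your diameter bound --- and with it the entire ``only if'' direction --- is not established. Deferring the terminal case analysis is acceptable in a plan (it is genuinely routine, though it occupies most of the paper's Section~\ref{Proofs}); deferring the gcd/normalization step is not, because that is where the real content of the argument lies.
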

	\noindent Based on the above theorems, they proposed the following conjecture.
	\begin{conjecture}\textup{\cite[Conjecture 4.1]{Mohan_Pandey_2023_restricted_sumset}}\label{Mohan_Pandey_conjecture}
		Let $k$ be a large positive integer and $h$ be a positive integer with $2 \leq h \leq k-2$. Let $A$ be a finite set of $k$ nonnegative integers with min$(A)=0$ and d$(A)=1$.
		\begin{enumerate}
			\item[\upshape(a)] If $|h^{\wedge}A|=hk-h^2+2$, then $A\subset [0,k]$.
			\item [\upshape(b)] If $|h^{\wedge}A|=hk-h^2+3$, then $A\subset [0,k+1]$.
			\item [\upshape(c)] If $|h^{\wedge}A|=hk-h^2+4$, then $A\subset [0,k+2]$.
		\end{enumerate}
	\end{conjecture}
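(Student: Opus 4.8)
The plan is to reduce all three parts to a single \emph{diameter lower bound} and then to establish that bound. Under the given normalizations $\min(A)=0$ and $d(A)=1$, write $A=\{0=a_0<a_1<\cdots<a_{k-1}\}$; since every element lies in $[0,a_{k-1}]$, parts (a), (b), (c) assert precisely that $a_{k-1}\le k-1+\delta$ when $|h^{\wedge}A|=hk-h^2+1+\delta$ for $\delta\in\{1,2,3\}$. The identity $|h^{\wedge}A|=|(k-h)^{\wedge}A|$ from \eqref{Eq-1} lets me assume $2\le h\le k/2$ throughout. The key claim I would isolate is
\begin{equation}\label{diam-bound}
	|h^{\wedge}A|\ \ge\ (hk-h^2+1)+\bigl(a_{k-1}-(k-1)\bigr),
\end{equation}
valid as long as $a_{k-1}$ is not too large. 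Granting \eqref{diam-bound}, the hypothesis $|h^{\wedge}A|=hk-h^2+1+\delta$ immediately forces $a_{k-1}-(k-1)\le\delta$, i.e. $A\subseteq[0,a_{k-1}]\subseteq[0,k-1+\delta]$, which is exactly part (a), (b), or (c). Thus everything rests on \eqref{diam-bound}.

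To prove \eqref{diam-bound} I would take $h=2$ as the base case: there the small-diameter branch of Lev's Theorem \ref{Lev_restricted_thm} gives $|2^{\wedge}A|\ge a_{k-1}+k-2$, which is exactly \eqref{diam-bound} for $h=2$ and is valid for $a_{k-1}\le 2k-5$, a range that comfortably contains the diameters near $k$ relevant here once $k$ is large. To raise $h$ I would peel off the largest element: with $B=A\setminus\{a_{k-1}\}$ one has
\[
	h^{\wedge}A=h^{\wedge}B\ \cup\ \bigl(a_{k-1}+(h-1)^{\wedge}B\bigr),
\]
and every element of $a_{k-1}+(h-1)^{\wedge}B$ exceeding $\max\bigl(h^{\wedge}B\bigr)=\sum_{i=k-1-h}^{k-2}a_i$ is automatically new. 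Writing $N$ for the number of such new elements and bounding $|h^{\wedge}B|$ below by Nathanson's Theorem \ref{Direct Thm Nathanson restricted} applied to the $(k-1)$-set $B$, the target \eqref{diam-bound} reduces to the combinatorial estimate $N\ge\bigl(a_{k-1}-(k-1)\bigr)+h$. Since the elements of $(h-1)^{\wedge}B$ near its maximum are governed, by reflection, by the small-gap structure near $a_0$, and since $a_{k-1}-a_{k-1-h}$ measures the length of the top $h$ steps of $A$, this is where the excess $a_{k-1}-(k-1)$ enters with the correct slope $1$.

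The main obstacle I anticipate is twofold. First, making the count $N$ precise: I must show $N$ grows at least as fast as the excess $a_{k-1}-(k-1)$ uniformly in $h$ and in the gap pattern of $A$, which requires a careful reflection analysis of the top of $(h-1)^{\wedge}B$ together with control of the overlap between $h^{\wedge}B$ and $a_{k-1}+(h-1)^{\wedge}B$; the complication that $d(B)$ may exceed $1$ even when $d(A)=1$ (for instance $A=\{0,3,5\}$ gives $B=\{0,3\}$ with $d(B)=3$) has to be absorbed, presumably by exploiting that $a_{k-1}$ is then not divisible by $d(B)$, which can only create additional new elements. Second, \eqref{diam-bound} genuinely fails once $a_{k-1}$ is large (e.g. for $h=2$ and $A=\{0,1,2,10\}$), so a separate \emph{large-diameter} argument is needed to exclude $a_{k-1}\ge k+3$: here I would prove the cruder statement that $a_{k-1}\ge k+3$ already forces $|h^{\wedge}A|\ge hk-h^2+5$, contradicting $|h^{\wedge}A|\le hk-h^2+4$. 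This mirrors the golden-mean branch of Theorem \ref{Lev_restricted_thm}, and producing a clean analogue of it for arbitrary $h$ is, I expect, the hardest part of the argument.
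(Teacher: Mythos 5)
Your reduction of (a)--(c) to the diameter inequality $|h^{\wedge}A|\ge (hk-h^2+1)+\bigl(a_{k-1}-(k-1)\bigr)$ is sound, but the route you propose for proving that inequality breaks at its central step, and not merely for lack of detail: the combinatorial estimate it rests on is false. You bound $|h^{\wedge}B|$ below by Nathanson's Theorem \ref{Direct Thm Nathanson restricted} and then require $N\ge (a_{k-1}-(k-1))+h$, where $N$ counts elements of $a_{k-1}+(h-1)^{\wedge}B$ exceeding $\max(h^{\wedge}B)$. But those elements lie in the half-open interval $\bigl(\max(h^{\wedge}B),\,a_{k-1}+\max((h-1)^{\wedge}B)\bigr]$, whose length is exactly $a_{k-1}-a_{k-h-1}$ (since $\max(h^{\wedge}B)-\max((h-1)^{\wedge}B)=a_{k-h-1}$), so $N\le a_{k-1}-a_{k-h-1}$. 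Your estimate therefore forces $a_{k-h-1}\le k-h-1$, i.e. $[0,k-h-1]\subseteq A$: it can only hold when every gap of $A$ sits among its top $h$ elements. It already fails for the extremal set of part (a), $A=[0,k]\setminus\{1\}$: there $a_{k-1}=k$ and $a_{k-h-1}=k-h$, so $N\le h$, while you need $N\ge h+1$. What keeps the diameter inequality true (with equality) in this example is that $|h^{\wedge}B|=h(k-1)-h^2+2$ for $B=[0,k-1]\setminus\{1\}$ (Proposition \ref{one element EIT prop} with $k-1$ in place of $k$), which exceeds Nathanson's bound by $1$ --- precisely the slack your reduction discards. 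Any repair must propagate the excess of $B$ itself (for instance, induction on $k$ carrying the full diameter-dependent bound for $B$, whenever $d(B)=1$ and its diameter is controlled), rather than invoking the uniform Nathanson bound; counting only the new sums above $\max(h^{\wedge}B)$ can never compensate, as the example shows.

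The second load-bearing piece is simply absent: to exclude $a_{k-1}\ge k+3$ you would need an $h$-fold analogue of the golden-mean branch of Theorem \ref{Lev_restricted_thm}, and you explicitly defer it as ``the hardest part'' without any argument. So the proposal is a program whose two essential lemmas are, respectively, false as formulated and missing. It is worth noting how the paper's proof of Theorem \ref{Lemma 1} avoids both problems at once by never requiring any $h$-fold analogue of Lev's theorem: it forms $A_1=\{a_0,a_{h-1},a_h,\ldots,a_{k-1}\}$ (which has $k-h+2$ elements and the same maximum $a_{k-1}$), proves $d(A_1)=1$ by a counting argument, and embeds into $h^{\wedge}A$ the union $(a_1+\cdots+a_{h-2}+2^{\wedge}A_1)\cup(a_{k-2}+a_{k-1}+(h-2)^{\wedge}A_4)$, whose two pieces meet in a single point; Nathanson's bound applied to the $(h-2)^{\wedge}A_4$ piece then yields $|2^{\wedge}A_1|\le 2k-h$, and for $k\ge 3h+1$ this is below the golden-mean branch of Lev's theorem, so the large-diameter case is ruled out and the small-diameter branch gives $a_{k-1}\le k$ (resp. $k+1$, $k+2$) directly.
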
 
	\noindent In this article, we first prove the above conjecture in the form of following theorem.
	\begin{theorem}\label{Lemma 1}
		Let   $h\geq 3$ and $k$  be positive integers. Let  $A$ be a  finite set of $k$ integers such that $\min(A) =0$ and $d(A) =1$.
		\begin{enumerate}
			\item[\upshape(1)] If $k \geq 3h +1$ and $\left|h^{\wedge}A\right| = hk-h^{2}+2$, then $A \subseteq [0,k].$
			
			\item[\upshape(2)] If $k \geq 3h+3$ and $\left|h^{\wedge}A\right| = hk-h^{2}+3$, then $A \subseteq [0,k+1].$
			
			\item[\upshape(3)] If $k \geq 3h+4$ and $\left|h^{\wedge}A\right| = hk-h^{2}+4$, then $A \subseteq [0,k+2].$
		\end{enumerate}

	\end{theorem}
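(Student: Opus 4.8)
The plan is to convert all three statements into a single lower bound for $|h^{\wedge}A|$ in terms of the diameter of $A$. Write $A=\{a_0<a_1<\cdots<a_{k-1}\}$ with $a_0=0$. Since $|A|=k$ and $\min(A)=0$, we have $A\subseteq[0,k+j]$ if and only if $a_{k-1}\le k+j$, so part (1) (resp. (2), (3)) is equivalent to the claim that $|h^{\wedge}A|=hk-h^{2}+2$ (resp. $+3$, $+4$) forces $a_{k-1}\le k$ (resp. $k+1$, $k+2$). Thus it suffices to prove the diameter estimate
\begin{equation*}
	|h^{\wedge}A|\ \ge\ a_{k-1}+(h-1)k-h^{2}+2
\end{equation*}
in the range $a_{k-1}\le 2k-5$: granting it, $|h^{\wedge}A|=hk-h^{2}+m$ with $m\in\{2,3,4\}$ yields $a_{k-1}\le k+(m-2)$, which is exactly the desired conclusion. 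This estimate reduces at $h=2$ to the first (linear) bound of Lev's Theorem \ref{Lev_restricted_thm}, and it is sharp, with equality for the progression $A=[0,k-1]$.

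To pass from $h=2$ to general $h$ I would prove the telescoping inequality
\begin{equation*}
	|h^{\wedge}A|\ \ge\ |(h-1)^{\wedge}A|+(k-2h+1),
\end{equation*}
which is itself sharp for arithmetic progressions. Summing it over the levels $h,h-1,\dots,3$ and substituting the base case gives
\begin{equation*}
	|h^{\wedge}A|\ \ge\ |2^{\wedge}A|+(h-2)k-h^{2}+4.
\end{equation*}
Now invoke Lev's Theorem \ref{Lev_restricted_thm}. If $a_{k-1}\le 2k-5$, the linear bound $|2^{\wedge}A|\ge a_{k-1}+k-2$ turns this into exactly the diameter estimate above. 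If instead $a_{k-1}\ge 2k-4$, the constant bound $|2^{\wedge}A|\ge(\theta+1)k-6$ gives $|h^{\wedge}A|\ge(\theta+h-1)k-h^{2}-2$, which exceeds $hk-h^{2}+4$ once $(\theta-1)k>6$; since $k\ge 3h+1$ this is automatic, so the large-diameter case never occurs under our hypotheses and may be discarded. The thresholds $k\ge 3h+c$ are calibrated precisely so that every intermediate level stays in the regime where these Lev bounds are available.

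The real work, and the step I expect to be the main obstacle, is the telescoping inequality. The natural approach is to peel off the largest element: with $C=A\setminus\{a_{k-1}\}$ one has $h^{\wedge}A=h^{\wedge}C\cup\bigl(a_{k-1}+(h-1)^{\wedge}C\bigr)$ and likewise $(h-1)^{\wedge}A=(h-1)^{\wedge}C\cup\bigl(a_{k-1}+(h-2)^{\wedge}C\bigr)$. Using Theorem \ref{Direct Thm Nathanson restricted} to control the \emph{untouched} parts $h^{\wedge}C$ and $(h-1)^{\wedge}C$, the inequality comes down to comparing the number of genuinely new elements that $a_{k-1}$ contributes at level $h$ against the number it contributes at level $h-1$; equivalently, one must count how many restricted $(h-1)$-fold sums of $C$ land in a prescribed window just below $\max(h^{\wedge}C)$. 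This window count is where the gap sequence $a_{i+1}-a_i$ and the condition $d(A)=1$ have to be exploited, and pinning it down — rather than the surrounding bookkeeping — is the delicate heart of the argument.
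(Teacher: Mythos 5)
Your outer framework is arithmetically sound: if the telescoping inequality $|j^{\wedge}A|\ge|(j-1)^{\wedge}A|+(k-2j+1)$ held for all $3\le j\le h$, then summing does give $|h^{\wedge}A|\ge|2^{\wedge}A|+(h-2)k-h^{2}+4$, and Lev's Theorem \ref{Lev_restricted_thm} applied to $A$ itself (which satisfies $\min(A)=0$, $d(A)=1$) finishes exactly as you say: in the regime $a_{k-1}\le 2k-5$ one gets $a_{k-1}\le k+(m-2)$, and the regime $a_{k-1}\ge 2k-4$ is impossible since $(\theta-1)k>6$ once $k\ge 3h+1\ge 10$. The genuine gap is that the telescoping inequality --- which you yourself flag as ``the main obstacle'' and ``the delicate heart of the argument'' --- is never proved, and it is not a known result one can cite. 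Nor is it bookkeeping: it is equivalent to the assertion that the excess $\epsilon_j:=|j^{\wedge}A|-(jk-j^{2}+1)$ is non-decreasing in $j$, and that assertion is \emph{false} without a restriction on $j$. Indeed, by the reflection identity \eqref{Eq-1}, at $j=k-1$ your inequality would read $k=|(k-1)^{\wedge}A|\ge|(k-2)^{\wedge}A|+(3-k)=|2^{\wedge}A|+3-k$, i.e.\ $|2^{\wedge}A|\le 2k-3$, which by Theorem \ref{Direct Thm Nathanson restricted} forces $A$ to be an arithmetic progression; so the inequality fails at the top levels for every non-progression. Hence any proof must exploit $j\le h\le(k-1)/3$ in an essential way, and your peeling sketch does not engage with this: writing $C=A\setminus\{a_{k-1}\}$, the identity $(j-1)^{\wedge}A=(j-1)^{\wedge}C\cup\bigl(a_{k-1}+(j-2)^{\wedge}C\bigr)$ only yields $|(j-1)^{\wedge}C|\ge|(j-1)^{\wedge}A|-|(j-2)^{\wedge}C|$, which loses far more than the margin $k-2j+1$ you need; the ``window count'' you defer is precisely the whole difficulty.

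For comparison, the paper never compares consecutive levels at all. It forms $A_{1}=\{a_{0},a_{h-1},a_{h},\dots,a_{k-1}\}$ and $A_{4}=A\setminus\{a_{0},a_{k-2},a_{k-1}\}$, first proves $d(A_{1})=1$ by a contradiction argument resting on Theorem \ref{Direct Thm Nathanson restricted}, then observes that $(a_{1}+\cdots+a_{h-2}+2^{\wedge}A_{1})\cup(a_{k-2}+a_{k-1}+(h-2)^{\wedge}A_{4})\subseteq h^{\wedge}A$ with the two pieces meeting in exactly one point; Nathanson's \emph{direct} bound on $(h-2)^{\wedge}A_{4}$ then yields $|2^{\wedge}A_{1}|\le 2k-h$, and Lev's theorem applied to $A_{1}$ (not to $A$) gives $a_{k-1}\le k$. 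So the paper needs only the direct theorem at intermediate levels, never an inequality between restricted sumsets of consecutive orders. To salvage your route you would have to supply a complete proof of the telescoping lemma in the range $j\le k/3$; as it stands, that lemma looks at least as hard as the theorem it is meant to prove.
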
 
	\noindent	In addition, we prove the following extended inverse results for $h^{\wedge}A$.
	\begin{theorem}\label{one element EIT}
		Let $h\geq 3$ and $k \geq 3h+1$ be positive integers. Let $A$ be a finite set of $k$  integers such that $\min(A) =0$, $d(A) =1$, and $|h^{\wedge}A|= hk-h^2+2$. Then $$A=[0,k] \setminus \{x\},$$ where $ x \in \{1,k-1\}.$
	\end{theorem}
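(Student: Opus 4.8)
The plan is to leverage Theorem~\ref{Lemma 1}(1), which already tells us that under the hypotheses $h\ge 3$, $k\ge 3h+1$, $\min(A)=0$, $d(A)=1$, and $|h^{\wedge}A|=hk-h^2+2$, the set $A$ must satisfy $A\subseteq[0,k]$. Since $|A|=k$ and $A\subseteq[0,k]$ is a set of $k$ integers inside an interval of $k+1$ integers, exactly one element of $[0,k]$ is missing; that is, $A=[0,k]\setminus\{x\}$ for a unique $x\in[1,k-1]$ (the endpoints $0$ and $k$ are forced to lie in $A$, since $\min(A)=0$ and $\max(A)=k$ is needed to keep $d(A)=1$ together with the cardinality constraint --- I would first nail down why $k\in A$). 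So the entire theorem reduces to determining \emph{which} values of $x$ yield $|h^{\wedge}A|=hk-h^2+2$ rather than a strictly larger cardinality.

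The core computation is therefore a direct-problem calculation: for each candidate $x\in[1,k-1]$, compute $|h^{\wedge}(\,[0,k]\setminus\{x\}\,)|$ and determine exactly when it equals $hk-h^2+2$. I would start from the baseline $h^{\wedge}[0,k]=[\,\binom{h}{2},\,hk-\binom{h}{2}\,]$ (the full restricted sumset of the complete interval $[0,k]$ is itself an interval), whose size is $hk-h^2+h+1$. Removing the single element $x$ from the ground set deletes certain sums from $h^{\wedge}A$, and the key is to count, for each $s$ in the sumset range, how many $h$-element subsets of $[0,k]\setminus\{x\}$ sum to $s$, and hence which sums disappear entirely. The endpoints of the range are the most sensitive: the smallest achievable sum $\binom{h}{2}$ requires the subset $\{0,1,\dots,h-1\}$, so it is lost exactly when $x\in\{1,\dots,h-1\}$, and similarly for the largest sum near the top. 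The heuristic is that $x=1$ and $x=k-1$ (by the symmetry $|h^{\wedge}A|=|(k-h)^{\wedge}A|$ together with the reflection $A\mapsto k-A$) remove the minimal possible number of sums, giving exactly the target size, while all intermediate $x$ remove too few (leaving the sumset too large) or create gaps.

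The main technical step will be establishing the sharp count of missing sums as a function of $x$, and in particular proving that for $2\le x\le k-2$ one genuinely gets $|h^{\wedge}A|>hk-h^2+2$. I would organize this by bounding how many consecutive sums at each end of the range collapse: near the bottom, the sums $s=\binom{h}{2}, \binom{h}{2}+1,\dots$ are each represented by very few $h$-subsets when $x$ is small, so deleting $x$ near the low end kills a predictable number of them. The interior case $x=2,\dots,k-2$ is where the sumset stays too dense to lose enough elements, and this is the heart of the argument; here the hypothesis $k\ge 3h+1$ is what guarantees enough room for multiple representations so that no interior deletion can drop the count all the way to $hk-h^2+2$. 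I expect the main obstacle to be this interior ruling-out: it demands a careful representation-counting argument (or an explicit exhibition of two distinct $h$-subsets summing to each threatened $s$) rather than a slick symmetry, and getting the counting uniform in $h$ and $k$ is where the care is needed. The symmetry relation~\eqref{Eq-1} will cut the casework roughly in half, letting me treat $x\le k/2$ and reflect.
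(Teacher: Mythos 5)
Your proposal follows essentially the same route as the paper: invoke Theorem~\ref{Lemma 1}(1) to get $A\subseteq[0,k]$, write $A=[0,k]\setminus\{x\}$, and then do a direct computation of $\left|h^{\wedge}\left([0,k]\setminus\{x\}\right)\right|$ for each $x$ to isolate $x\in\{1,k-1\}$; the paper packages this computation as Proposition~\ref{one element EIT prop}, where in each case $h^{\wedge}A$ is written down explicitly as an interval (or an interval plus one isolated point), so the ``interior'' case you anticipate as the heart of the argument is in fact routine rather than delicate. One correction to the point you flagged: the exclusion of $x=k$ cannot come from $d(A)=1$, since $A=[0,k-1]$ also has $d(A)=1$ and $\min(A)=0$; the correct reason is that $[0,k-1]$ is an arithmetic progression, so by Theorem~\ref{Direct Thm Nathanson restricted} its restricted sumset has size $hk-h^{2}+1$, contradicting the hypothesis $|h^{\wedge}A|=hk-h^{2}+2$ (this is exactly how the paper dismisses $x\in\{0,k\}$). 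With that one-line fix, your outline is correct and matches the paper's proof.
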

	\begin{theorem}\label{two element EIT h=3}
		Let $k\geq 12$ be a positive integers. Let $A$ be a finite set of $k$  integers such that $\min(A) =0$, $d(A) =1$, and $|3^{\wedge}A|= 3k-6$. Then $$A= [0,k+1] \setminus \{x,y\},$$ where $\{x,y\}$ is one of the sets $\{1,2\},\{k-1,k\}, \{1,k\},\{2,k+1\}, \{3,k+1\},\{k-2,k+1\}$, and $\{k-3,k+1\}$.
	\end{theorem}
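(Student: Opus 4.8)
The plan is to first pin down the shape of $A$ using the inclusion result already established, and then run a finite case analysis on the two missing elements. Since $\left|3^{\wedge}A\right| = 3k-6 = 3k-3^{2}+3$ and $k \geq 12 = 3\cdot 3 + 3$, Theorem \ref{Lemma 1}(2) with $h=3$ applies and gives $A \subseteq [0,k+1]$. As $|A| = k$ while $[0,k+1]$ has $k+2$ elements, $A$ arises from $[0,k+1]$ by deleting exactly two elements, say $A = [0,k+1]\setminus\{x,y\}$ with $1 \leq x < y \leq k+1$ (here $0 \in A$ since $\min(A)=0$). I would measure $3^{\wedge}A$ against the full sumset $3^{\wedge}[0,k+1] = [3,3k]$ of cardinality $3k-2$; then $\left|3^{\wedge}A\right| = 3k-6$ is equivalent to saying that deleting $x$ and $y$ destroys exactly four of the integers in $[3,3k]$, where a sum $s$ is destroyed when every representation of $s$ as a sum of three distinct elements of $[0,k+1]$ uses $x$ or $y$.

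For sufficiency I would compute $3^{\wedge}A$ directly for each of the seven candidate pairs. In every case $3^{\wedge}A$ is an interval $[m,M]$, with $m$ the sum of the three smallest elements of $A$ and $M$ the sum of the three largest; for example $\{x,y\}=\{1,2\}$ yields $A=\{0\}\cup[3,k+1]$ and $3^{\wedge}A=[7,3k]$, while $\{x,y\}=\{1,k\}$ yields $3^{\wedge}A=[5,3k-2]$, each of size $3k-6$. The seven sets fall into symmetry classes under the reflection $a \mapsto (k+1)-a$, which preserves $\left|3^{\wedge}A\right|$: namely $\{1,2\}\leftrightarrow\{k-1,k\}$, the self-paired $\{1,k\}$, and, after a further translation, $\{2,k+1\}\leftrightarrow\{k-2,k+1\}$ and $\{3,k+1\}\leftrightarrow\{k-3,k+1\}$, so only four genuinely distinct computations are required.

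For necessity I would split on whether $k+1$ is deleted. If $k+1\in\{x,y\}$ then $A=[0,k]\setminus\{x\}$ with $x\in[1,k]$, and since $3^{\wedge}[0,k]=[3,3k-3]$ has $3k-5$ elements, I must find the $x$ for which the deletion destroys exactly one sum. Inspecting the sums of few representations shows destruction occurs only near the two ends: deleting $x$ kills sum $3$ iff $x\in\{1,2\}$, kills sum $4$ iff $x\in\{1,3\}$, and symmetrically near $3k-3$; a short computation then isolates exactly one destroyed sum precisely when $x\in\{2,3,k-2,k-3\}$, giving the four pairs containing $k+1$. If instead $k+1\notin\{x,y\}$, then $0,k+1\in A$, and I would stratify by how $\{x,y\}$ meets the bottom zone $\{1,2\}$, the top zone $\{k-1,k\}$, and the middle $[3,k-2]$, again using $a\mapsto(k+1)-a$ to halve the work. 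The conclusion is that any deletion hitting the middle, alone or combined with a single end deletion, destroys a number of sums different from four, so that only $\{1,2\}$, $\{k-1,k\}$, and $\{1,k\}$ survive.

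The main obstacle will be the bookkeeping in the necessity step, where a naive ``count the boundary shift'' argument fails for two reasons. First, deleting an element close to an extreme can destroy a sum that has several representations, because all of them may share that element; for instance both representations of $3k-5$ in $[0,k]$ use $k$, so deleting $k$ kills it, and such coincidences must be tracked to get the count exactly right. Second, I must confirm that in each surviving case $3^{\wedge}A$ has no internal gaps, so its cardinality is indeed $M-m+1$ minus the destroyed boundary sums; this is precisely where the hypothesis $k\geq 12$ enters, ensuring that every integer strictly between $m$ and $M$ still admits a representation avoiding both $x$ and $y$.
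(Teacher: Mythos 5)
Your overall strategy coincides with the paper's: apply Theorem \ref{Lemma 1}(2) with $h=3$ (here $3k-6=3k-3^{2}+3$ and $k\geq 12=3\cdot 3+3$) to force $A\subseteq[0,k+1]$, write $A=[0,k+1]\setminus\{x,y\}$, and then settle the theorem by an exhaustive determination of $\left|3^{\wedge}A\right|$ over all pairs $\{x,y\}$. The paper carries out that second step via its bank of propositions (for deletions $\{x,x+1\}$, $\{x,x+2\}$, $\{x,k\}$, $\{1,y\}$, $\{x,k-1\}$, $\{2,y\}$, and the general well-separated pair), whereas you organize the same case analysis by counting sums of $3^{\wedge}[0,k+1]=[3,3k]$ that are destroyed and by the reflection $a\mapsto (k+1)-a$; your necessity counts (exactly one destroyed sum when $k+1$ is deleted, exactly four otherwise) agree with the paper's computed values, so the bookkeeping scheme itself is sound.

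There is, however, one concrete false claim in your sufficiency step, and as stated it would produce the wrong answer for two of the seven pairs: you assert that in every case $3^{\wedge}A$ is the interval $[m,M]$, where $m$ is the sum of the three smallest and $M$ the sum of the three largest elements of $A$. For $\{x,y\}=\{3,k+1\}$, i.e.\ $A=\{0,1,2\}\cup[4,k]$, one has $m=3$ and $M=3k-3$, but $4\notin 3^{\wedge}A$ since its unique representation $0+1+3$ uses the deleted element $3$; hence $3^{\wedge}A=\{3\}\cup[5,3k-3]$ is not an interval, and the interval recipe gives $3k-5$ rather than the correct $3k-6$ (the four destroyed sums are $4$, $3k-2$, $3k-1$, $3k$). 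The same defect occurs for $\{k-3,k+1\}$ by reflection, and it is exactly the structure recorded in the paper's Proposition \ref{one element EIT prop}(iii). Your own necessity paragraph ("deleting $x$ kills sum $4$ iff $x\in\{1,3\}$") already contains the correct fact, so the proposal is internally inconsistent on this point; the fix is to abandon interval-ness (and the closing claim that surviving cases have "no internal gaps," also false for these two pairs) and rely uniformly on your destroyed-sum count, which is the invariant that actually equals $(3k-2)-\left|3^{\wedge}A\right|$.
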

	
	\begin{theorem}\label{two element EIT}
		Let $h\geq 4$ and $k \geq 3h+3$ be positive integers. Let $A$ be a finite set of $k$  integers such that $\min(A) =0$, $d(A) =1$, and $|h^{\wedge}A|= hk-h^2+3$. Then $$A= [0,k+1] \setminus \{x,y\},$$ where $\{x,y\}$ is one of the sets $\{1,2\},\{k-1,k\}, \{1,k\},\{2,k+1\}$, and $\{k-2,k+1\}$.
	\end{theorem}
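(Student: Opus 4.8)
The plan is to combine the containment result already in hand with a direct computation of the restricted sumset once the shape of $A$ is pinned down. First I would invoke Theorem~\ref{Lemma 1}(2): since $h\ge 4$, $k\ge 3h+3$ and $|h^{\wedge}A|=hk-h^{2}+3$, it gives $A\subseteq[0,k+1]$. As $|A|=k$ and $|[0,k+1]|=k+2$, the set $A$ misses exactly two points of $[0,k+1]$, and because $0=\min(A)\in A$ these are two integers $1\le x<y\le k+1$; write $A=[0,k+1]\setminus\{x,y\}$. The problem thus reduces to deciding, for each such pair, whether $|h^{\wedge}A|=hk-h^{2}+3$. To halve the number of pairs I would use the reflection $a\mapsto (k+1)-a$, which sends $A$ to $[0,k+1]\setminus\{(k+1)-y,(k+1)-x\}$ and preserves $|h^{\wedge}A|$ (the translation/dilation invariance noted in the introduction); after normalising the image back to minimum $0$ this pairs the candidate holes as $\{1,2\}\leftrightarrow\{k-1,k\}$ and $\{2,k+1\}\leftrightarrow\{k-2,k+1\}$, with $\{1,k\}$ self-paired, so it suffices to analyse holes near the bottom end.

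The key device is a deficiency decomposition. Writing $\mu$ and $M$ for the sum of the $h$ smallest and the $h$ largest elements of $A$, one has $h^{\wedge}A\subseteq[\mu,M]$, and I would record
\begin{equation*}
	|h^{\wedge}A|=\bigl(hk-h^{2}+2h+1\bigr)-\bigl(\delta_{\mathrm{bot}}+\delta_{\mathrm{top}}+|G|\bigr),
\end{equation*}
where $\delta_{\mathrm{bot}}=\mu-\binom{h}{2}\ge 0$ and $\delta_{\mathrm{top}}=\bigl(h(k+1)-\binom{h}{2}\bigr)-M\ge 0$ measure how the removal of $x,y$ shortens the sum-interval at each end, and $G=[\mu,M]\setminus h^{\wedge}A$ is the set of interior gaps. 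Hence $|h^{\wedge}A|=hk-h^{2}+3$ is exactly the condition $\delta_{\mathrm{bot}}+\delta_{\mathrm{top}}+|G|=2h-2$. Both $\delta_{\mathrm{bot}}$ and $\delta_{\mathrm{top}}$ are explicit piecewise-linear functions of the hole positions: for instance $\delta_{\mathrm{bot}}=h-x$ when $x\le h-1<h+1\le y$, and $\delta_{\mathrm{bot}}=2h+1-x-y$ when $x<y\le h-1$ (so $\delta_{\mathrm{bot}}\le 2h-2$ with equality iff $\{x,y\}=\{1,2\}$), and symmetrically for $\delta_{\mathrm{top}}$ after reflection. Computing these, the five listed pairs are precisely the ones realising $\delta_{\mathrm{bot}}+\delta_{\mathrm{top}}=2h-2$ with no gaps, which I would confirm by writing $h^{\wedge}A$ as the union of the sub-sumsets obtained by prescribing whether a representation uses the near-isolated elements $0$ and/or $k+1$, and checking that for $k\ge 3h+3$ the resulting intervals overlap and tile $[\mu,M]$, so that $|G|=0$.

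The main obstacle is controlling the gap set $G$ for all remaining pairs, and this is exactly where the hypothesis $h\ge 4$ enters. For two interior holes ($h\le x$ and $y\le k+1-h$) one has $\delta_{\mathrm{bot}}=\delta_{\mathrm{top}}=0$, and I would show $|G|=0$, so $|h^{\wedge}A|=hk-h^{2}+2h+1>hk-h^{2}+3$; thus interior holes are never solutions. The delicate configurations are those with one hole at distance comparable to $h$ from an end, where a genuine gap can occur: a value just above $\mu$ (or just below $M$) may fail to be representable because the only small-element patterns realising it require the removed point. I would prove that for $h\ge 4$ such forced gaps do not arise for the off-list pairs, whereas for $h=3$ they do, which is precisely why Theorem~\ref{two element EIT h=3} carries the two extra pairs $\{3,k+1\}$ and $\{k-3,k+1\}$; concretely, for $\{3,k+1\}$ with $h=3$ the value $4$ is missing, giving $\delta_{\mathrm{bot}}=0$, $\delta_{\mathrm{top}}=3$, $|G|=1$ and total deficiency $2h-2$, whereas for $h\ge 4$ the same pair has $\delta_{\mathrm{bot}}=h-3$, $\delta_{\mathrm{top}}=h$ and $|G|=0$, so the total is $2h-3\ne 2h-2$. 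Carrying out this gap bookkeeping uniformly in $h$ for every near-boundary pair, and discarding each pair whose total deficiency is not $2h-2$, leaves exactly the five claimed sets and completes the proof.
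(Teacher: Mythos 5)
Your overall route coincides with the paper's: invoke Theorem \ref{Lemma 1}(2) to get $A\subseteq[0,k+1]$, write $A=[0,k+1]\setminus\{x,y\}$, and then decide for each pair whether $|h^{\wedge}A|=hk-h^{2}+3$. The paper performs the second step through its battery of propositions (Proposition \ref{one element EIT prop} through Proposition \ref{gen {x,y} deletion prop}), which compute $h^{\wedge}A$ explicitly as a union of intervals for every configuration of the two holes; you reorganize the same case analysis via the identity $|h^{\wedge}A|=(hk-h^{2}+2h+1)-(\delta_{\mathrm{bot}}+\delta_{\mathrm{top}}+|G|)$ together with the reflection $a\mapsto(k+1)-a$. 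That identity is correct, your values of $\delta_{\mathrm{bot}},\delta_{\mathrm{top}}$ for the five listed pairs are correct (each totals $2h-2$, and the paper's interval computations confirm $|G|=0$ there), and the reflection does halve the enumeration; so the skeleton is sound and somewhat tidier than the paper's.

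However, two of your concrete claims about the gap set $G$ are false, and one of them is a step your plan depends on. First, your notion of ``interior'' includes $x=h$ and $y=k+1-h$, and there $|G|=0$ fails: for $A=[0,k+1]\setminus\{h,s\}$ with $s$ interior, the value $\binom{h}{2}+1$ is not in $h^{\wedge}A$, since its only representation as a sum of $h$ distinct nonnegative integers is $0+1+\cdots+(h-2)+h$ and $h\notin A$; so $|G|=1$, and for $\{h,k-h+1\}$ one even has $|G|=2$ (this is Proposition \ref{gen {x,y} deletion prop}(vi),(vii), giving $hk-h^{2}+2h$ and $hk-h^{2}+2h-1$ rather than $hk-h^{2}+2h+1$). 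Second, forced gaps near the boundary are not a phenomenon special to $h=3$: they occur for off-list pairs at every $h\geq3$ (e.g.\ $\{1,h+1\}$, $\{r,k-h+1\}$, $\{h,k\}$ all have $|G|\geq1$); what distinguishes $h\geq4$ is only that the resulting totals $\delta_{\mathrm{bot}}+\delta_{\mathrm{top}}+|G|$ never equal $2h-2$. Neither error is fatal to your strategy: in the configurations where you wrongly assert $|G|=0$ the actual gaps have size at most $2$ while $2h-2\geq6$, so those pairs are still excluded, and your closing sentence in any case prescribes the correct procedure (compute the total deficiency for every pair and keep exactly those equal to $2h-2$). But as written, the two assertions ``$|G|=0$ for interior holes'' and ``no forced gaps for off-list pairs when $h\geq4$'' would fail if you tried to prove them, and must be replaced by genuine gap computations of the kind the paper's propositions carry out.
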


	\begin{theorem}\label{3 element EIT h=3}
		Let $k \geq 13$ be a positive integer. Let $A$ be a set of $k$  integers such that $\min(A) =0$, $d(A) =1$, and $|3^{\wedge}A|= 3k-5$. Then $$A= [0,k+2] \setminus \{x,y,z\},$$ where $\{x,y,z\}$ is one of the sets $\{1,2,3\}, \{k-1,k,k+1\}, \{1,2,k+1\}, \{1,k,k+1\}, \{1,3,k+2\}, \{k-2,k,k+2\}, \{1,4,k+2\}, \{k-3,k,k+2\}, \{2,k,k+2\}, \{1,k-1,k+2\}, \{1,k-2,k+2\}, \{3,k,k+2\}$, and $\{r,k+1,k+2\}$ where $4 \leq r \leq k-4.$
	\end{theorem}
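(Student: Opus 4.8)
The plan is to first apply the containment result and then reduce to a finite, end-localized computation. Since $k\geq 13=3\cdot 3+4$ and $|3^{\wedge}A|=3k-5=3k-3^{2}+4$, Theorem~\ref{Lemma 1}(3) applies with $h=3$ and gives $A\subseteq[0,k+2]$. As $|A|=k$ while $[0,k+2]$ has $k+3$ elements, exactly three integers are deleted, so $A=[0,k+2]\setminus\{x,y,z\}$ with $1\leq x<y<z\leq k+2$ (the value $0$ is never deleted because $\min(A)=0$). The hypothesis $d(A)=1$ is automatic here: with only three holes and $k$ large, $A$ contains two consecutive integers. Finally, the reflection $a\mapsto(k+2)-a$ preserves $[0,k+2]$ and, being a dilation by $-1$ followed by a translation, preserves $|3^{\wedge}A|$; after renormalizing the minimum to $0$ it permutes the candidate hole-triples. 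I will use this symmetry to halve the casework, noting that it pairs the listed triples (for instance $\{1,2,3\}\leftrightarrow\{k-1,k,k+1\}$, $\{1,2,k+1\}\leftrightarrow\{1,k,k+1\}$, $\{1,3,k+2\}\leftrightarrow\{k-2,k,k+2\}$, and $r\leftrightarrow k-r$ inside the family $\{r,k+1,k+2\}$).

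It remains to decide, for each triple, whether $|3^{\wedge}A|=3k-5$. Since $3^{\wedge}[0,k+2]=[3,3k+3]$ has $3k+1$ elements and $3^{\wedge}A\subseteq[3,3k+3]$, the target is equivalent to the statement that exactly $6$ integers of $[3,3k+3]$ fail to be represented as a sum of three distinct elements of $A$. To locate the missing set I will write $3^{\wedge}A$ as an explicit union of intervals: because at most three elements are removed, $A$ is a union of a long interval together with a few exceptional elements near $0$ and near $k+2$, and choosing $j\in\{0,1,2,3\}$ summands among the exceptional elements and $3-j$ from the long interval produces, in each case, a full interval of sums (using $3^{\wedge}[u,v]=[3u+3,3v-3]$ and its shifts). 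The key structural claim is that these intervals overlap to cover everything strictly between the minimal sum $m_{-}$ (the sum of the three smallest elements of $A$) and the maximal sum $m_{+}$ (the sum of the three largest), so that missing sums can occur only near the two ends; this holds because the long interval has length far exceeding $3$ when $k\geq 13$. Consequently the number of missing sums splits as $D=(m_{-}-3)+(3k+3-m_{+})+g$, where $g$ counts the finitely many internal gaps forced near the ends. In particular a single isolated hole at an interior position contributes nothing to $D$, so all of the deficiency is produced by holes within distance about $3$ of $0$ or of $k+2$.

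The constraint $D=6$ then forces the holes to cluster at the ends, and I will organize the analysis by the number of such \emph{active} holes. If all three holes are active, a finite enumeration of the low/high deletion patterns (computing $m_{-}$, $m_{+}$, and the induced gaps $g$, and invoking reflection to treat only the cases with low defect not exceeding high defect) yields the finitely many admissible triples; for instance $\{1,2,3\}$ gives $m_{-}=9$, $m_{+}=3k+3$, $g=0$, hence $D=6$, whereas $\{1,2,4\}$ gives $m_{-}=8$, $m_{+}=3k+3$, $g=0$, hence $D=5$ and is excluded, and $\{1,4,k+2\}$ gives $m_{-}=5$, $m_{+}=3k$, together with the single internal gap at the sum $6$ (all of whose representations $\{0,1,5\},\{0,2,4\},\{1,2,3\}$ meet $\{1,4\}$), so $D=2+3+1=6$. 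If exactly two holes are active, their combined end-defect must equal $6$; checking the possibilities shows this happens only for the top pair $\{k+1,k+2\}$, which lowers $m_{+}$ to $3k-3$, and the remaining hole is then a free interior point $r$, giving the one-parameter family $\{r,k+1,k+2\}$ with $4\leq r\leq k-4$ (the endpoints $r=3$ and $r=k-3$ are excluded because there the lone hole itself deletes the sum $4$, respectively $3k-4$, forcing $D=7$). If at most one hole is active, then $D\leq 2<6$, so there is no solution.

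The routine part is the exhaustive bookkeeping of the end patterns. The genuinely delicate point, and the main obstacle, is the correct accounting of the internal gaps $g$: these are produced \emph{jointly} by several end-holes acting together, as in $\{1,4,k+2\}$ where the holes $1$ and $4$ cooperate to delete the sum $6$, so one cannot treat the low holes and their induced gaps hole-by-hole. Handling this interaction carefully is exactly what justifies confining all missing sums to the two ends, what pins down the boundary condition $4\leq r\leq k-4$ in the one-parameter family, and what separates admissible triples such as $\{1,2,3\}$ from near-misses such as $\{1,2,4\}$.
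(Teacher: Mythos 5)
Your skeleton is exactly the paper's: since $3k-5=3k-3^{2}+4$ and $k\geq 13=3\cdot 3+4$, Theorem \ref{Lemma 1}(3) gives $A=[0,k+2]\setminus\{x,y,z\}$, and the theorem reduces to a finite determination of which hole-triples yield $|3^{\wedge}A|=3k-5$. The paper carries out this determination through the explicit interval computations of Propositions \ref{one element EIT prop}--\ref{prop3.15} (the triples containing $k+2$ falling under the two-hole propositions, the family $\{r,k+1,k+2\}$ under Proposition \ref{one element EIT prop}), whereas you repackage the same computation as counting the $D=(3k+1)-(3k-5)=6$ missing sums of $[3,3k+3]$, split into end-defects plus jointly created gaps, with reflection halving the casework. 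The repackaging is sound and your sample computations are correct, including the joint gap at $6$ for $\{1,4,k+2\}$ and the exclusion of $r\in\{3,k-3\}$ from the family $\{r,k+1,k+2\}$ (one slip: a single active hole at $k+2$ gives $D=3$, not $D\leq 2$, though still $<6$).

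The genuine gap is the one statement that makes your bounded enumeration provably exhaustive: a locality lemma asserting that every missing sum lies within a fixed distance of $3$ or of $3k+3$, and that a hole far from both ends contributes nothing to $D$ \emph{even jointly} with other holes. You assert this in two forms, both false as stated. The claim that the intervals cover everything strictly between $m_{-}$ and $m_{+}$ is contradicted by your own example $\{1,4,k+2\}$, where $6$ is missing although $m_{-}=5$; and the window ``within distance about $3$'' is too narrow, since holes at distance $4$, respectively $5$, from an end do create joint gaps --- this is precisely why $\{1,4,k+2\}$ and $\{k-3,k,k+2\}$ (whose three holes jointly kill the sum $3k-3$) belong to the final list. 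An enumeration that treats $4$ and $k-3$ as inert interior points predicts defect $2+3=5$ for the first triple and $5$ for the second, and would wrongly discard both. So until the window is pinned down by a proved lemma, your case split into ``at most one / exactly two / three active holes'' is not well defined, and its exhaustiveness (in particular the additivity $D=D_{\mathrm{bottom}}+D_{\mathrm{top}}$ used in the two-active-hole step) is unjustified. The fix is a quantitative coverage lemma --- for instance, every $s$ in the middle range admits four pairwise disjoint representations as a sum of three distinct elements of $[0,k+2]$, so three holes cannot destroy all of them --- with a window of size roughly $9$, and one must check the constants survive $k=13$, where the two windows nearly meet; after that, the finite enumeration your proposal only samples must actually be completed. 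The paper never needs such a lemma, because its propositions simply compute $3^{\wedge}A$ for every configuration, parametrized interior ones included; that brute-force completeness is what your shorter write-up has not yet replaced.
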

	
	\begin{theorem}\label{3 element EIT h=4}
		Let  $k \geq 16$ be a positive integer. Let $A$ be a finite set of $k$ integers such that $\min(A) =0$, $d(A) =1$, and $|4^{\wedge}A|= 4k-12$. Then $$A= [0,k+2] \setminus \{x,y,z\},$$ where $\{x,y,z\}$ is one of the sets $\{1,2,3\}, \{k-1,k,k+1\}, \{1,2,k+1\}, \{1,k,k+1\}, \{1,3,k+2\}, \{k-2,k,k+2\}, \{2,k,k+2\}, \{1,k-1,k+2\}, \{3,k+1,k+2\},\{k-3,k+1,k+2\},\{4,k+1,k+2\}$, and  $\{k-4,k+1,k+2\}.$
	\end{theorem}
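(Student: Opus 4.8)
The plan is to reduce at once to a deletion problem and then to count lost sums. Since here $h=4$ and $k\ge 16=3h+4$, part (3) of Theorem~\ref{Lemma 1} applies and gives $A\subseteq[0,k+2]$. As $|A|=k$ while $\lvert[0,k+2]\rvert=k+3$, the set $A$ is obtained from $[0,k+2]$ by deleting exactly three elements, so $A=[0,k+2]\setminus\{x,y,z\}$ with $x<y<z$, and $0\in A$ forces $x\ge 1$. Throughout I would use the reflection $A\mapsto(k+2)-A$, which maps $[0,k+2]$ onto itself and, by the dilation--translation invariance of $\lvert h^{\wedge}A\rvert$, preserves $\lvert4^{\wedge}A\rvert$; after renormalizing the minimum back to $0$ it sends a deleted triple to another admissible deleted triple. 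This pairs the twelve target sets into the six reflection classes $\{1,2,3\}\leftrightarrow\{k-1,k,k+1\}$, $\{1,2,k+1\}\leftrightarrow\{1,k,k+1\}$, $\{1,3,k+2\}\leftrightarrow\{k-2,k,k+2\}$, $\{2,k,k+2\}\leftrightarrow\{1,k-1,k+2\}$, $\{3,k+1,k+2\}\leftrightarrow\{k-3,k+1,k+2\}$, and $\{4,k+1,k+2\}\leftrightarrow\{k-4,k+1,k+2\}$, so it suffices to treat one representative from each class.

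For the sufficiency direction I would, for each triple, write $A$ explicitly and evaluate $4^{\wedge}A$ as a union of arithmetic intervals, splitting sums according to whether they use the smallest surviving element. For example, for $\{x,y,z\}=\{1,2,3\}$ one has $A=\{0\}\cup[4,k+2]$; the sums containing $0$ form $0+3^{\wedge}[4,k+2]=[15,3k+3]$ and those avoiding $0$ form $4^{\wedge}[4,k+2]=[22,4k+2]$, and since $k\ge 16$ these overlap, giving $4^{\wedge}A=[15,4k+2]$ of size $4k-12$. The remaining representatives are handled by the same interval bookkeeping, and their reflections follow automatically.

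The necessity direction is the crux. Writing $I=[0,k+2]$, one has $4^{\wedge}I=[6,4k+2]$ with $\lvert4^{\wedge}I\rvert=4k-3$ and $4^{\wedge}A\subseteq4^{\wedge}I$, so the hypothesis $\lvert4^{\wedge}A\rvert=4k-12$ is equivalent to demanding that \emph{exactly nine} integers of $[6,4k+2]$ be \emph{lost}, i.e.\ have every representation as a sum of four distinct elements of $I$ meeting $\{x,y,z\}$. The key structural point, valid because $k\ge16$, is that the lost integers are confined to a ``bottom'' block near $6$ and a ``top'' block near $4k+2$: any sum lying sufficiently far from both ends admits four pairwise element-disjoint representations and hence survives any three deletions, while a small sum uses only small elements and so can be lost only through the small deleted elements (symmetrically at the top). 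A short computation then records the bottom losses produced by each lower deletion pattern---for instance deleting $\{1\}$ loses exactly $\{6,7,8\}$, deleting $\{2\}$ loses two sums, each of $\{3\},\{4\}$ loses one, $\{1,2\}$ loses $\{6,\dots,11\}$, $\{1,3\}$ loses five, and $\{1,2,3\}$ loses $\{6,\dots,14\}$---with the top losses being the exact reflections. Requiring the bottom and top losses to sum to nine then leaves only the twelve listed configurations.

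The hard part is making this bookkeeping both rigorous and exhaustive, because the decoupling into two independent blocks is only approximate: a deletion near (but not at) the boundary, combined with the others, can both shift the loss count and open an \emph{internal} gap in $4^{\wedge}A$ (e.g.\ deleting $\{1,2,6\}$ kills $\{6,\dots,11\}\cup\{13\}$ but not $12$), and these extra losses must be counted too. Thus for every candidate I must verify whether $4^{\wedge}A$ is gapless between its least and greatest elements, and it is precisely here that $k\ge16$ is used to force the two boundary intervals to meet. I would organize the argument as a finite case analysis according to how the three deleted elements distribute between the two endpoint zones, showing that a genuinely central deletion leaves every extreme sum representable and hence keeps $\lvert4^{\wedge}A\rvert$ strictly above $4k-12$, and that no all-lower or all-upper triple other than $\{1,2,3\}$ and $\{k-1,k,k+1\}$ attains loss nine; each surviving case reduces to a direct interval computation as in the sufficiency step. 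The proof of the $h=3$ analogue, Theorem~\ref{3 element EIT h=3}, provides a template, but the $h=4$ loss-counts must be recomputed from scratch, which is exactly why the one-parameter family $\{r,k+1,k+2\}$ present for $h=3$ collapses here to the four values $r\in\{3,4,k-4,k-3\}$.
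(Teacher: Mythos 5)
Your proposal is correct, and its first two steps coincide with the paper's: both invoke Theorem~\ref{Lemma 1}(3) (applicable here since $k\ge 16=3h+4$) to force $A=[0,k+2]\setminus\{x,y,z\}$, and both verify sufficiency by writing $4^{\wedge}A$ as an explicit union of intervals for each listed triple. The difference is in how necessity is established. The paper settles it by sheer enumeration: its battery of propositions (Propositions~\ref{prop3.10}, \ref{prop 3.13}, \ref{prop 3.16(i)}, \ref{prop 3.17(i)}, \ref{prop 3.16}, \ref{prop3.17} and their unlabeled companions, together with the two-deletion propositions for triples containing $k+2$ and Proposition~\ref{one element EIT prop} for triples containing $\{k+1,k+2\}$) computes $\lvert h^{\wedge}A\rvert$ exactly for \emph{every} shape of deleted triple, and the theorem is read off by checking which formulas equal $hk-h^2+4$ at $h=4$; in particular, the entries $x\in\{h,k-h\}$ of Proposition~\ref{one element EIT prop} are precisely what produce $\{4,k+1,k+2\}$ and $\{k-4,k+1,k+2\}$, the collapse of the $h=3$ family you correctly identified. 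You instead count the nine elements of $4^{\wedge}[0,k+2]=[6,4k+2]$ that must be lost, dismiss triples containing a central element via the four-pairwise-element-disjoint-representations argument, and do bookkeeping only near the two ends. That route is sound---your loss counts check out (e.g.\ $\{1\}$ loses $3$, $\{1,2\}$ loses $6$, $\{1,3\}$ loses $5$, $\{1,2,3\}$ loses $9$, and deleting $k+2$ alone loses four sums, the formal reflection of the inadmissible deletion of $0$)---and it is leaner for the single value $h=4$; but it requires exactly the care you flag, since losses from the two ends are additive only after internal gaps are excluded (your $\{1,2,6\}$ example, or $\{1,5\}$, which loses $6,7,8,10$ but not $9$). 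What the paper's heavier machinery buys is uniformity in $h$: the same propositions, proved for general $h$, simultaneously serve Theorems~\ref{two element EIT h=3}--\ref{3 element EIT gen}, whereas your loss table must be recomputed for each $h$.
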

	
	\begin{theorem}\label{3 element EIT gen}
		Let $h\geq 5$ and $k \geq 3h+4$ be positive integers. Let $A$ be a finite set of $k$  integers such that $\min(A) =0$, $d(A) =1$, and $|h^{\wedge}A|= hk-h^2+4$. Then $$A= [0,k+2] \setminus \{x,y,z\},$$ where $\{x,y,z\}$ is one of the sets $\{1,2,3\}, \{k-1,k,k+1\}, \{1,2,k+1\}, \{1,k,k+1\}, \{1,3,k+2\}, \{k-2,k,k+2\}, \{2,k,k+2\}, \{1,k-1,k+2\}, \{3,k+1,k+2\}$, and $\{k-3,k+1,k+2\}.$
	\end{theorem}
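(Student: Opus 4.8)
The plan is to first pin down the skeleton of $A$ using the containment result already proved, then show that $h^{\wedge}A$ is a genuine interval, and finally turn the size hypothesis into an elementary additive condition on the three deleted points that can be solved by hand.

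First I would invoke Theorem \ref{Lemma 1}(3): since $h\geq 5$, $k\geq 3h+4$ and $|h^{\wedge}A|=hk-h^{2}+4$, we get $A\subseteq[0,k+2]$, and because $|A|=k$ while $|[0,k+2]|=k+3$, this forces $A=[0,k+2]\setminus\{x,y,z\}$ for three distinct $x<y<z$ in $[1,k+2]$ (note $0\in A$ as $\min A=0$). I would also record the reflection $a\mapsto (k+2)-a$: by the dilation (by $-1$) and translation invariance of $|h^{\wedge}A|$ noted after \eqref{Eq-1}, it preserves all the hypotheses and the value $hk-h^{2}+4$, and it pairs the ten target sets (for instance $\{1,2,3\}\leftrightarrow\{k-1,k,k+1\}$ and $\{1,3,k+2\}\leftrightarrow\{k-2,k,k+2\}$), so it halves the eventual verification; one only has to keep track that when $k+2$ itself is deleted the reflected set must be retranslated so that its minimum is again $0$.

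The technical heart is the claim that for such an $A$ the restricted sumset is the full interval $h^{\wedge}A=[\mu,M]$, where $\mu$ is the sum of the $h$ smallest and $M$ the sum of the $h$ largest elements of $A$. I would prove this by a shifting argument: since at most three integers are missing from $[0,k+2]$, the consecutive gaps of $A$ are all at most $4$, and $A$ contains a run $[c,d]\subseteq A$ with $d-c\gg h$ (this is where $k\geq 3h+4$ enters). Fixing an appropriate $(h-1)$-subset and sliding the remaining summand through the long run produces a block of consecutive sums, and one checks that these blocks, together with the rare $\geq 2$ shifts across a deleted point, overlap to cover every integer from $\mu$ to $M$; hence $h^{\wedge}A$ has no gaps. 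Granting this, $|h^{\wedge}A|=M-\mu+1$ and the hypothesis becomes exactly $M-\mu=hk-h^{2}+3$.

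Finally I would translate this into a condition on $\{x,y,z\}$. Put $\beta=\mu-\binom{h}{2}\geq 0$ and $\tau=\bigl(h(k+2)-\binom{h}{2}\bigr)-M\geq 0$; then $M-\mu=hk-h^{2}+3h-\beta-\tau$, so the condition is precisely $\beta+\tau=3(h-1)$. Writing $b$ (resp.\ $t$) for the number of deletions among the $h$ smallest (resp.\ $h$ largest) elements of $A$, a direct count gives $\beta=hb+\binom{b}{2}-\sigma$ and $\tau=ht+\binom{t}{2}-\rho$, where $\sigma$ is the sum of the bottom deletions and $\rho$ the sum of the co-distances $(k+2)-r$ of the top deletions. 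Since $\sigma\geq\binom{b+1}{2}$ and $\rho\geq\binom{t}{2}$, one gets $\beta\leq b(h-1)$ and $\tau\leq ht$; a deep-interior deletion contributes $0$ to both, so if one existed then $b+t\leq 2$ and $\beta+\tau\leq 2h<3(h-1)$ for $h\geq 5$, contradicting $\beta+\tau=3(h-1)$. Thus $b+t=3$, and I would run the four cases $(b,t)\in\{(3,0),(2,1),(1,2),(0,3)\}$: each reduces to expressing a fixed small constant as $\sigma$, $\rho$, or $\sigma+\rho$ via distinct admissible values, e.g.\ $(3,0)$ forces $\sigma=6$ hence $\{x,y,z\}=\{1,2,3\}$, while $(0,3)$ forces $\rho=6$ hence the three top configurations $\{k-1,k,k+1\}$, $\{k-2,k,k+2\}$, $\{k-3,k+1,k+2\}$; the mixed cases yield the remaining six sets. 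Totaling the cases gives exactly the ten listed sets, and the forward direction is the routine (symmetry-reduced) check that each realizes $M-\mu=hk-h^{2}+3$. The main obstacle is the interval claim of the previous paragraph: one must argue carefully that no gap occurs for any placement of $x,y,z$, and that the closed forms for $\mu,M$ (equivalently $\beta,\tau$) remain valid uniformly; once this is secured, the rest is a finite Diophantine enumeration.
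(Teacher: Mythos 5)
Your opening step (invoking Theorem \ref{Lemma 1}(3) to force $A=[0,k+2]\setminus\{x,y,z\}$) matches the paper, and your closing Diophantine bookkeeping is internally correct: \emph{if} one grants that $h^{\wedge}A$ is always the full interval $[\mu,M]$, then $\beta+\tau=3(h-1)$ does enumerate exactly the ten listed sets. The genuine gap is that the claim you call the technical heart is false, and no shifting argument can repair it, because the holes are really there. For $\{x,y,z\}=\{h,h+1,h+2\}$ we have $A=[0,h-1]\cup[h+3,k+2]$; the minimum $\mu=\binom{h}{2}$ is attained only by $\{0,1,\dots,h-1\}$, and every other $h$-subset of $A$ contains an element $\geq h+3$, so $\mu+1,\mu+2,\mu+3\notin h^{\wedge}A$ (this is the paper's Proposition \ref{prop3.10}(iii); see also Proposition \ref{prop 3.13}(iii)). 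Such holes occur whenever a deleted point sits at one of the critical distances from an end of $[0,k+2]$. Consequently the hypothesis $|h^{\wedge}A|=hk-h^2+4$ translates not into $\beta+\tau=3(h-1)$ but into $\beta+\tau+G=3(h-1)$, where $G\geq 0$ is the number of integers of $[\mu,M]$ missing from $h^{\wedge}A$; your argument treats only the solutions with $G=0$ and contains no argument excluding solutions with $G\geq 1$.

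That exclusion is not a formality; it is where the work lies, and it is exactly what the paper's long list of propositions supplies (they compute $|h^{\wedge}A|$, holes included, for every placement of $x,y,z$). The sharpest evidence that your interval claim cannot be ``secured'' comes from the neighboring case $h=4$: for $\{x,y,z\}=\{4,k+1,k+2\}$, i.e.\ $A=[0,k]\setminus\{h\}$, one has $\beta+\tau=2h=8$ and $G=1$, so $\beta+\tau+G=9=3(h-1)$, and indeed $|4^{\wedge}A|=4k-12$ (Proposition \ref{one element EIT prop}(iii)). This is precisely why Theorem \ref{3 element EIT h=4} lists twelve sets while the present theorem lists ten; your method, run verbatim at $h=4$, would wrongly discard $\{4,k+1,k+2\}$ and $\{k-4,k+1,k+2\}$. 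For $h\geq 5$ it happens that every configuration with a hole has $|h^{\wedge}A|\geq hk-h^2+h>hk-h^2+4$, so your ten sets are the right answer --- but that inequality is exactly what must be proved, and your proposal does not prove it. If you add a lemma locating the holes and showing that for $h\geq 5$ no configuration with $G\geq 1$ satisfies $\beta+\tau+G=3(h-1)$, your skeleton becomes a correct proof, and arguably a tidier one than the paper's exhaustive case-by-case computation; as written, it is incomplete at its central point.
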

	\noindent Proof of all the above Theorems given in Section \ref{Proofs}. Due to \eqref{Eq-1}, above results also true for $(k-h)^{\wedge}A$.

	\section{Proofs}\label{Proofs}
	\begin{proof}[Proof of Theorem \ref{Lemma 1}]
		Let   $h\geq 3$ and $k$  be positive integers with $k \geq 3h +1$.	Let $A = \{a_{0}, a_{1}, \ldots , a_{k-1}\}$ be a set of $k$ integers such that $$0=a_{0}<a_{1}< \cdots < a_{k-1}$$ with $d(A) = 1$ and $\left|h^{\wedge}A\right| = hk-h^{2}+2$.  Let $A_{1} = \{a_{0}, a_{h-1}, a_{h}, \ldots, a_{k-1}\}$.
		
		\textbf{Claim:} $d(A_{1}) =1$. 
		
		If $d(A_{1}) >1$, then there exists a positive integer $t$ such that 
		\begin{equation}\label{Theorem-Equation1}
			1  \leq t < h-1,
		\end{equation} $d(A_{2}) =1$ and $d(A_{3}) >1$, where $A_{2} = \{a_{0}, a_{t}, a_{t+1}, \ldots, a_{k-1}\}$ and $A_{3} = A_{2}\setminus \{a_{t}\}$. To get a contradiction,  we show that $\left|h
		^{\wedge}A\right| > hk-h^{2}+2$.
		
		Since $d(A_{3})>1$ and $d(A_{2}) =1$, the sets $(h-1)^{\wedge}A_{3} + a_{t}$ and $h^{\wedge}A_{3}$ are disjoint subsets of $h^{\wedge}A$. This, together with \eqref{Theorem-Equation1} and  Theorem \ref{Direct Thm Nathanson restricted}, we get that \begin{align*}
			\left|h^{\wedge}A\right| &\geq \left|(h-1)^{\wedge}A_{3} + a_{t}\right| + \left|h^{\wedge}A_{3}\right|\\
			& =  \left|(h-1)^{\wedge}A_{3}\right| + \left|h^{\wedge}A_{3}\right|\\
			& \geq (h-1)(k-t)-(h-1)^{2}+1 + h(k-t)-h^{2}+1\\
			& =2hk-2ht-k+t-2h^{2}+2h +1\\
			&\geq hk  +  (h-1)k-2h(h-2) +1-2h^{2} +2h+1\\
			&\geq hk+ (h-1)(3h+1) -4h^{2}+6h+2\\
			& =hk-h^{2}+4h+1>hk-h^{2}+4,
		\end{align*}
		which is a contradiction. Thus, $d(A_{1})=1.$

		Now,  let $A _{4} = A \setminus\{a_{0},a_{k-2}, a_{k-1}\}$. Then, clearly \[(a_{1}+a_{2}+\ldots+a_{h-2}+2^{\wedge}A_{1}) \cup ((h-2)^{\wedge}A_{4}+a_{k-2}+a_{k-1}) \subseteq h^{\wedge}A\] and \[(a_{1}+a_{2}+\ldots+a_{h-2}+2^{\wedge}A_{1}) \cap (a_{k-2}+a_{k-1}+(h-2)^{\wedge}A_{4}) = \{a_{1}+a_{2}+\ldots+a_{h-2}+a_{k-2}+a_{k-1}\}. \] Therefore, by Theorem \ref{Direct Thm Nathanson restricted}, we get that
		\begin{align*}
			hk - h^{2} + 2 = |h^ {\wedge}A| &\geq |a_{1}+a_{2}+\cdots+a_{h-2}+2^ {\wedge}A_{1}| + |a_{k-2}+a_{k-1}+(h-2)^{\wedge}A_{4}|-1\\ & = |2^{\wedge}A_{1}| + |(h-2)^{\wedge}A_{4}|-1 \\
			& \geq |2^{\wedge}A_{1}|+(h-2)(k-3)-(h-2)^{2}+1-1.
		\end{align*} This gives that  $$|2^{\wedge}A_1| \leq 2k-h.$$
		Since $k \geq 3h+1$, we have \[|2^{\wedge}A_1|  \leq 2k-h  < 2.6(k-h+2)-6.\]
		Therefore by Theorem \ref{Lev_restricted_thm}, we get that  $a_{k-1} \leq 2(k-h+2)-5$. Again, using  Theorem \ref{Lev_restricted_thm}, we obtain \[2k-h \geq \left|2^{\wedge}A_{1}\right| \geq a_{k-1} + k-h+2-2.\]
		This gives $a_{k-1} \leq k$. Hence $A \subseteq  [0,k]$. This proves $(1)$. Using a similar argument, we can prove $(2)$ and $(3)$. 
	\end{proof}
	Now, we prove auxillary propositions to prove Theorem \ref{one element EIT} -
	Theorem \ref{3 element EIT gen}. 
	\begin{prop}\label{one element EIT prop}
		Let $ h \geq 3$ and $k \geq 3h+1 $ be positive integers. Let  \[A=[0,k] \setminus \{x\}, ~\text{where}~ x \in [1, k-1].\]
		\begin{enumerate}
			\item [\upshape(i)] If $x \in [1, h-1]$, then $|h^{\wedge}A|= hk-h^2+x+1$.
			
			\item[\upshape (ii)] If $x \in [k-h+1,k-1]$, then $|h^{\wedge}A|=
			(h+1)k-h^2-x+1$.
			
			\item [\upshape(iii)]  If $x \in \{h,k-h\}$, then  $|h^{\wedge}A|= hk-h^2+h $.      
			\item [\upshape(iv)]  If $x \in [h+1, k-h-1]$, then $|h^{\wedge}A|=hk-h^2+h+1$.
			
		\end{enumerate}    
	\end{prop}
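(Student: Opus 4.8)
The plan is to exploit the fact that $A$ is an interval with a single point deleted, so that $A = L \cup U$ where $L = [0,x-1]$ and $U = [x+1,k]$ are disjoint blocks of consecutive integers, of sizes $x$ and $k-x$. Splitting an $h$-element subset of $A$ according to how many of its elements lie in $L$ gives
\[
h^{\wedge}A \;=\; \bigcup_{j} \Big( j^{\wedge}L + (h-j)^{\wedge}U \Big),
\]
the union being over all $j$ with $0 \le j \le \min(h,x)$ and $0 \le h-j \le k-x$; for $k \ge 3h+1$ and $x \le k-h-1$ the second condition is automatic once $j\ge 0$, so $j$ ranges over $[0,\min(h,x)]$. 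Since $L$ and $U$ are sets of consecutive integers, each $j^{\wedge}L$ and $(h-j)^{\wedge}U$ is again a block of consecutive integers whose endpoints and length are given by Theorem \ref{Direct Thm Nathanson restricted} applied to an arithmetic progression; hence each
\[
P_j := j^{\wedge}L + (h-j)^{\wedge}U = [\alpha_j,\beta_j]
\]
is an interval of integers, and the whole computation reduces to (a) locating $\alpha_j,\beta_j$ and (b) deciding when consecutive $P_j$ overlap. I would treat $x \le k-h-1$ directly (this covers (i), (iv), and the value $x=h$ of (iii)) and obtain the remaining values $x\in[k-h,k-1]$ from the reflection $a\mapsto k-a$.

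For step (a) I would record $\alpha_j = \binom{j}{2} + (h-j)x + \binom{h-j+1}{2}$ and $\beta_j = jx - \binom{j+1}{2} + (h-j)k - \binom{h-j}{2}$, and then check the identities $\alpha_j - \alpha_{j+1} = x+h-2j$ and $\beta_j-\beta_{j+1} = (k-x)-h+2+2j$. Over the admissible range both are nonnegative (using $k-x\ge h+1$), so $\alpha_j$ and $\beta_j$ are decreasing in $j$. Thus the smallest value attained is $\alpha_{j_{\max}}$ with $j_{\max}=\min(h,x)$ and the largest is $\beta_0 = hk-\binom{h}{2} = \max h^{\wedge}[0,k]$; a short computation gives $\alpha_{j_{\max}} = \binom{h}{2} + (h-x)$ when $x\le h$ and $\binom{h}{2}$ when $x\ge h$.

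The heart of the argument is step (b): the union is a single interval precisely when $\alpha_j \le \beta_{j+1}+1$ for every consecutive pair, and a failure deletes $\alpha_j-\beta_{j+1}-1$ integers. Substituting the formulas turns each overlap condition into a linear inequality in $x,k$; because of the factor $(h-j-1)k$ in $\beta_{j+1}$, every pair with $j\le h-2$ holds with ample room once $k\ge 3h+1$, and the bottom pair $(P_0,P_1)$ also holds (its condition is $(h-1)(k-x)\ge h^2-h+1$, valid since $k-x\ge h+1$). The only delicate pair is the top one, $(P_{h-1},P_h)$, which arises only for $x\ge h$; there $\beta_h-\alpha_{h-1} = (h-1)x-(h^2-h+2)$, so the overlap fails exactly when $x=h$, and then it fails by a single integer (the value $\binom{h}{2}+1$ is omitted, since $P_h=\{\binom{h}{2}\}$ and $\alpha_{h-1}=\binom{h}{2}+2$). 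This single borderline inequality, separating case (iii) from case (iv), is the main obstacle; everything else is comfortable bookkeeping controlled by the $(h-j-1)k$ terms.

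Assembling the pieces yields the four cases directly. For $x\in[1,h-1]$ every pair overlaps, so $h^{\wedge}A = [\binom{h}{2}+(h-x),\,hk-\binom{h}{2}]$ has size $hk-h^2+x+1$, giving (i); for $x\in[h+1,k-h-1]$ every pair overlaps and $h^{\wedge}A=[\binom{h}{2},\,hk-\binom{h}{2}]$ has size $hk-h^2+h+1$, giving (iv); for $x=h$ exactly one integer is lost, so the size is $hk-h^2+h$, which is the $x=h$ half of (iii). Finally the reflection $a\mapsto k-a$ sends $A=[0,k]\setminus\{x\}$ to $[0,k]\setminus\{k-x\}$ and preserves $|h^{\wedge}\cdot|$ by the dilation and translation invariance noted in the introduction: it carries $x=k-h$ to $x=h$ (completing (iii)) and $x\in[k-h+1,k-1]$ to $x\in[1,h-1]$, turning the size $hk-h^2+x'+1$ with $x'=k-x$ into $(h+1)k-h^2-x+1$ and thereby establishing (ii).
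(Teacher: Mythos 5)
Your proposal is correct and follows essentially the same route as the paper's own proof: both identify $h^{\wedge}A$ explicitly as a single interval in cases (i) and (iv), as an interval together with the isolated point $\frac{h(h-1)}{2}$ (with $\frac{h(h-1)}{2}+1$ omitted) when $x=h$, and both obtain case (ii) and the $x=k-h$ half of case (iii) from the reflection $a\mapsto k-a$ together with translation--dilation invariance of $|h^{\wedge}A|$. The only difference is one of rigor rather than of method: where the paper writes ``it is easy to see,'' your decomposition $h^{\wedge}A=\bigcup_{j}\bigl(j^{\wedge}L+(h-j)^{\wedge}U\bigr)$ with the monotone-endpoint and overlap analysis supplies the actual verification, including the one delicate non-overlap at $x=h$ that produces the missing element.
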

	\begin{proof}
		\begin{enumerate}
			\item [\upshape(i).]
			If $x \in [1, h-1]$, then $A=[0,x-1] \cup [x+1,k]$. It is easy to see that \begin{center} 
				$h^{\wedge}A =
				\bigg[\dfrac{h(h+1)}{2}-x, hk - \dfrac{h(h-1)}{2}\bigg].$
			\end{center} 
			Therefore, we have $|h^{\wedge}A|=hk-h^2+x+1.$
			\item [\upshape (ii)] If $ x \in [k-h+1,k-1]$, then
			\begin{align*}
				A& =[0,x-1] \cup [x+1,k]\\ & = k- ([0,k-x-1] \cup [k-x+1,k])
				\\ & = k- ([0,y-1] \cup [y+1,k]),
			\end{align*}
			where $y=k-x \in [1,h-1].$
			Since the cardinality of $h^{\wedge}A$ is translation and dilation invariant, we have 
			\begin{center}
				$|h^{\wedge}A| = |h^{\wedge}([0,y-1] \cup  [y+1,k])|, ~ \text{where}~ y \in [1,h-1].$ 
			\end{center} Therefore, from the previous case, we have  \begin{align*}
				|h^{\wedge}A| &=hk-h^2+y+1 \\&=hk-h^2+(k-x)+1\\&= (h+1)k -h^2 -x+1.
			\end{align*}
			\item 	[\upshape(iii)]  
			\begin{enumerate}
				\item If $x=h$,  then $A=[0,h-1] \cup [h+1,k]$. It is easy to observe that $$\dfrac{h(h- 1)}{2}+1 \notin h^{\wedge}A$$ and 
				\begin{center}$h^{\wedge}A = \left\{\dfrac{h(h-1)}{2}\right\} \bigcup   \left[\dfrac{h(h- 1)}{2}+2, hk-\dfrac{h(h-1)}{2}\right].$
				\end{center}
				Therefore,  $ |h^{\wedge}A| = hk-h^2+h.$
				\item 	If $x=k-h$, then 
				$A=[0,k-(h+1)] \cup [k-(h-1),k]= k-([0,h-1] \cup [h+1,k]).$
				Since the cardinality of $h^{\wedge}A$ is translation and dilation invariant, we have 
				$$|h^{\wedge}A|= |h^{\wedge}([0,h-1] \cup [h+1,k])|.$$
				Therefore, in both the cases, we have  $ |h^{\wedge}A| = hk-h^2+h.$
			\end{enumerate}

			\item 	[\upshape(iv)]   If $x \in [h+1,k-h-1]$, then $A=[0,x-1] \cup [x+1,k]$. It follows that 
			\begin{center}
				$h^{\wedge}A =\bigg[ \dfrac{h(h-1)}{2}, hk-\dfrac{h(h-1)}{2}\bigg].$
			\end{center} Therefore, $ |h^{\wedge}A| = hk-h^2+h+1.$
		\end{enumerate}
	\end{proof}
	\noindent In  the following proposition we compute the cardinality of $h^{\wedge}A$ when $A=[0,k+1] \setminus \{x,x+1\}, ~\text{where}~ x\in [1, k-1].$
	\begin{prop}
		Let $ h \geq 3$ and $k \geq 3h+3 $ be positive  integers. Let  \[A=[0,k+1] \setminus \{x,x+1\}, ~\text{where}~ x\in [1, k-1].\]
		\begin{enumerate}
			\item [\upshape(i)] If $x \in [1, h-1] $, then  $|h^{\wedge}A|= hk-h^2+2x+1$.
			
			\item[\upshape(ii)] If $x \in [k-h+1,k-1]$, then $|h^{\wedge}A|=	(h+2)k-h^2-2x+1$. 
			
			\item [\upshape(iii)]    If $x \in \{h,k-h\}$, then $|h^{\wedge}A|= hk-h^2+2h-1$.
			\item [\upshape(iv)]  If $x \in [h+1, k-h-1]$, then $|h^{\wedge}A|=hk-h^2+2h+1$.
			
		\end{enumerate}    
	\end{prop}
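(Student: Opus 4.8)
The plan is to pin down the set $h^{\wedge}A$ explicitly in each of the four ranges of $x$ and then simply read off its cardinality, exactly in the spirit of the preceding proposition. Writing $A=[0,x-1]\cup[x+2,k+1]$, I would rely on two tools. The first is the translation–dilation invariance $h^{\wedge}(a\ast A+b)=a\ast(h^{\wedge}A)+hb$: applying the reflection $a=-1,\ b=k+1$ sends $A$ to $[0,k+1]\setminus\{k-x,\,k-x+1\}$ and preserves $\left|h^{\wedge}A\right|$, so that $x\mapsto k-x$ is a symmetry of the problem. The second is a direct computation of $\min(h^{\wedge}A)$ and $\max(h^{\wedge}A)$ together with a ``no gaps'' analysis showing which integers between them are attained. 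The hypothesis $k\ge 3h+3$ is what guarantees the surviving block $[x+2,k+1]$ is long enough for these arguments.

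For case (i), $x\in[1,h-1]$, the block $[0,x-1]$ has fewer than $h$ elements, so every $h$-subset must use all of $0,\dots,x-1$ and at least one element of $[x+2,k+1]$. Choosing the $h-x$ smallest such elements gives $\min(h^{\wedge}A)=\tfrac{h(h+3)}{2}-2x$, while the $h$ largest elements of $A$ give $\max(h^{\wedge}A)=hk+h-\tfrac{h(h-1)}{2}$. I would then argue that $h^{\wedge}A$ is the \emph{full} interval between these values: from the minimal subset one raises the sum by exactly $1$ by sliding the largest used element one step up inside the consecutive run $[x+2,k+1]$, and iterating (sliding lower elements once the top saturates) reaches every value up to the maximum, yielding $\left|h^{\wedge}A\right|=hk-h^2+2x+1$. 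Case (iv), $x\in[h+1,k-h-1]$, is analogous but the removed pair now lies strictly between the $h$ smallest and the $h$ largest elements, so $\min=\tfrac{h(h-1)}{2}$ and $\max=hk+h-\tfrac{h(h-1)}{2}$; because $h\ge 3$ provides enough slack to slide past the interior gap without stalling, $h^{\wedge}A$ is again a full interval and $\left|h^{\wedge}A\right|=hk-h^2+2h+1$.

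Case (ii), $x\in[k-h+1,k-1]$, follows with no extra work from the reflection above, since $k-x\in[1,h-1]$ lands in the range of case (i); hence
\[
\left|h^{\wedge}A\right|=hk-h^2+2(k-x)+1=(h+2)k-h^2-2x+1.
\]
Case (iii) is the delicate one. For $x=h$ we have $A=[0,h-1]\cup[h+2,k+1]$, and the minimal $h$-subset is the \emph{isolated} consecutive run $\{0,\dots,h-1\}$ with sum $\tfrac{h(h-1)}{2}$; any other $h$-subset must use an element $\ge h+2$, and the cheapest move (drop $h-1$, insert $h+2$) already raises the sum by $3$. Thus $\min+1$ and $\min+2$ are unattained while $\min+3$ is, and from $\min+3$ upward the long block fills every value to the maximum, giving
\[
h^{\wedge}A=\Bigl\{\tfrac{h(h-1)}{2}\Bigr\}\cup\Bigl[\tfrac{h(h-1)}{2}+3,\ hk+h-\tfrac{h(h-1)}{2}\Bigr],
\]
of cardinality $hk-h^2+2h-1$. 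The subcase $x=k-h$ reflects to $x=h$ and produces the same count.

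The arithmetic — the min/max formulas and the final tallies — is routine; the genuine content, and the step I expect to require the most care, is the ``no gaps'' claim, i.e.\ verifying that apart from the two explicitly missing values in case (iii) every integer in the computed range is realized as a sum of $h$ distinct elements. I would make this precise by an explicit increment argument: exhibit one $h$-subset attaining the minimum (respectively $\min+3$ in case (iii)), then describe a step that replaces a single used element by the next unused element to increase the sum by exactly $1$, and check that this never stalls before the maximum. The real subtlety is exactly the boundary between cases (i), (iii), (iv): the double gap at the extreme low end isolates the minimal run only when $[0,x-1]$ has \emph{precisely} $h$ elements (that is, $x=h$), which is why case (iii) alone loses two values while the neighboring ranges do not. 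Confirming this dichotomy carefully, and confirming that the interior gap in case (iv) is always bridged because $h\ge 3$, is where the argument must be watertight.
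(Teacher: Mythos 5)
Your proposal is correct and follows essentially the same route as the paper: in each range you determine $h^{\wedge}A$ explicitly as a full interval (or, for $x\in\{h,k-h\}$, an isolated minimum together with an interval starting three above it), use the reflection $x\mapsto k-x$ to reduce cases (ii) and half of (iii) to earlier cases, and read off the count; your min/max formulas agree with the paper's after simplification. The only difference is presentational — the paper asserts the interval structure as ``easy to see'' where you sketch the single-step sliding argument — so there is no substantive gap between the two.
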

	
	\begin{proof}
		\begin{enumerate}
			\item [\upshape(i)]
			If $x \in [1, h-1]$, then $A=[0,x-1] \cup [x+2,k+1]$. It is easy to see that 
			$$h^{\wedge}A =
			\bigg[\dfrac{(h+1)(h+2)}{2}-(2x+1), hk - \dfrac{h^2-3h}{2}\bigg].$$
			Therefore,  $|h^{\wedge}A|=hk-h^2+2x+1 .$
			\item  [\upshape (ii) ] If $ x \in [k-h+1,k-1]$, then
			\begin{align*}A &=[0,x-1] \cup [x+2,k+1]
				\\&= (k+1)- \big([0,k-x-1] \cup  [k-x+2,k+1]\big) \\&= (k+1)- \big([0,y-1] \cup  [y+2,k+1]\big), 
			\end{align*} where  $y =k-x\in [1,h-1].$
			Since the cardinality of $h^{\wedge}A$ is translation and dilation invariant, we have 
			$$|h^{\wedge}A| = |h^{\wedge}([0,y-1]  \cup [y+2,k+1])|, ~ \text{where}~ y \in [1,h-1].$$
			Therefore, from the previous case, we have  \begin{align*}
				|h^{\wedge}A| &=hk-h^2+2y+1\\ &=hk-h^2+2(k-x)+1\\&=(h+2)k -h^2 -2x +1.
			\end{align*}

			\item 	[\upshape(iii)]  \begin{enumerate}
				\item 
				If $x=h$, then $A=[0,h-1] \cup [h+2,k+1]$. It is easy to observe that
				$\dfrac{h(h- 1)}{2}+1$, $\dfrac{h(h- 1)}{2}+2$ are not in $h^{\wedge}A$,
				and   
				\begin{center}$h^{\wedge}A =\bigg\{ \dfrac{h(h- 1)}{2}\bigg\} \bigcup \bigg[\dfrac{h(h- 1)}{2}+3, hk-\dfrac{h(h-3)}{2}\bigg] .$\end{center}
				Therefore, $ |h^{\wedge}A| =  hk-h^2+2h-1.$

				\item 	If $x=k-h$, then we have \begin{align*}
					A& =[0,k-h-1] \cup [k-(h-2),k+1]\\
					& = (k+1)-\big([0,h-1] \cup [h+2,k+1]\big).\end{align*} Since the cardinality of $h^{\wedge}A$ is translation and dilation invariant, we have \[|h^{\wedge}A| = |h^{\wedge}\big([0,h-1] \cup [h+2,k+1]\big)|.\] Therefore, in both the cases, we have \[ |h^{\wedge}A| = hk-h^2+2h-1.\]
			\end{enumerate}
			
			\item 	[\upshape(iv)]   If $x \in [h+1,k-h-1]$, then $A=[0,x-1] \cup [x+2,k+1]$. It is evident that $$h^{\wedge}A =\bigg[ \dfrac{h(h-1)}{2}, hk-\dfrac{h^2-3h}{2}\bigg].$$ Therefore, $ |h^{\wedge}A| = hk-h^2+2h+1.$		
		\end{enumerate}
	\end{proof}
	
	\noindent	In the following proposition, we compute the cardinality of $h^{\wedge}A$ when $A=[0,k+1] \setminus \{x,x+2\}, ~\text{where}~ x\in [1,k-2].$
	
	\begin{prop}
		Let $ h \geq 3$ and $k \geq 3h+3$ be positive  integers. Let  \[A=[0,k+1] \setminus \{x,x+2\}, ~\text{where}~ x\in [1,k-2].\]
		\begin{enumerate}
			\item [\upshape(i)] If $x \in [1, h-2]$, then $|h^{\wedge}A|=hk-h^2+2x+2$.
			
			\item[\upshape(ii)] If $x \in [k-h+1,k-2]$, then $|h^{\wedge}A|= (h+2)k-h^2-2x$.
			
			\item [\upshape(iii)] If $x \in \{h-1,k-h\}$, then  $|h^{\wedge}A|=hk-h^2+2h-1$.
			\item [\upshape(iv)]  If $x \in \{h,k-h-1\}$, then
			$|h^{\wedge}A|= hk-h^2+2h $.      
			\item [\upshape(v)]  If $x \in [h+1, k-h-2]$, then $|h^{\wedge}A|=hk-h^2+2h+1$.
			
		\end{enumerate}    
	\end{prop}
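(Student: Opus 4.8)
The plan is to follow the template of the two preceding propositions: write $A$ explicitly as a union of intervals with a single isolated point, read off $h^{\wedge}A$ as an explicit interval (possibly with one missing value), and deduce the cardinality; the upper half of each range will then be handed to us for free by a reflection symmetry. Removing $x$ and $x+2$ while retaining $x+1$ gives $A=[0,x-1]\cup\{x+1\}\cup[x+3,k+1]$. Observe that $(k+1)-A=[0,k+1]\setminus\{k-1-x,\,k+1-x\}=[0,k+1]\setminus\{y,y+2\}$ with $y=k-1-x$, and that $|h^{\wedge}A|$ is invariant under $a\mapsto(k+1)-a$. Hence the involution $x\mapsto k-1-x$ sends $[1,h-2]$ onto $[k-h+1,k-2]$, interchanges the two members of each pair $\{h-1,k-h\}$ and $\{h,k-h-1\}$, and preserves $[h+1,k-h-2]$. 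It therefore suffices to compute $|h^{\wedge}A|$ for $x\in[1,h-2]$, $x=h-1$, $x=h$, and $x\in[h+1,k-h-2]$; cases (ii), the second member of (iii), and the second member of (iv) then follow from $|h^{\wedge}A|=|h^{\wedge}((k+1)-A)|$.

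For these base cases I would first pin down the two extreme sums. In every case treated directly we have $x\le k-h-2$, so the $h$ largest elements $k+1,k,\dots,k-h+2$ of $A$ survive and $\max(h^{\wedge}A)=h(k+1)-\frac{h(h-1)}{2}=hk-\frac{h^2-3h}{2}$. The bottom end is governed by how the two holes meet the $h$ smallest elements. For $x\in[1,h-2]$ the smallest admissible $h$-set is $\{0,\dots,x-1,x+1,x+3,\dots,h+1\}$, giving $\min(h^{\wedge}A)=\frac{(h+1)(h+2)}{2}-(2x+2)$, and I claim $h^{\wedge}A$ is the entire interval up to the maximum, so $|h^{\wedge}A|=hk-h^{2}+2x+2$. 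For $x=h-1$ the value $h-1$ is deleted, so the smallest $h$-set is $\{0,\dots,h-2,h\}$ with $\min=\frac{h(h-1)}{2}+1$, and the single value $\frac{h(h-1)}{2}+2$ is skipped, yielding $h^{\wedge}A=\{\frac{h(h-1)}{2}+1\}\cup[\frac{h(h-1)}{2}+3,\,hk-\frac{h^2-3h}{2}]$ of size $hk-h^{2}+2h-1$. For $x=h$ the smallest $h$ elements are $0,1,\dots,h-1$, so $\min=\frac{h(h-1)}{2}$, but $\frac{h(h-1)}{2}+1$ is unreachable (the smallest sum that uses any element beyond $0,\dots,h-1$ exceeds it by at least $2$), giving $h^{\wedge}A=\{\frac{h(h-1)}{2}\}\cup[\frac{h(h-1)}{2}+2,\,hk-\frac{h^2-3h}{2}]$ of size $hk-h^{2}+2h$. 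Finally, for $x\in[h+1,k-h-2]$ both $\{0,\dots,h-1\}$ and the top block are intact and the two holes sit strictly inside, so $h^{\wedge}A=[\frac{h(h-1)}{2},\,hk-\frac{h^2-3h}{2}]$ and $|h^{\wedge}A|=hk-h^{2}+2h+1$.

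The endpoint arithmetic is routine; the real content is the \emph{filling} claim, that apart from the single exceptional gap in the cases $x=h-1$ and $x=h$ every integer between the minimal and maximal sum is represented. I would establish this by a unit-step argument: beginning from the minimal $h$-set, repeatedly replace a chosen element $a\in S$ by $a+1\in A\setminus S$ so as to raise the sum by exactly $1$, continuing until the maximal set is reached. The delicate point --- and the main obstacle --- is to show that such a unit increment is always available except at the one place where a hole forces a jump of size $2$. Because the two deleted values are $x$ and $x+2$ and everything above $x+2$ forms the contiguous block $[x+3,k+1]$, a forced double jump can only occur at the bottom, when the configuration is pinned against the holes; a short case check shows this happens exactly once when $x\in\{h-1,h\}$ (producing precisely the recorded gap) and never when $x\in[1,h-2]$ or $x\in[h+1,k-h-2]$, since in those ranges the surplus of available small elements lets one bridge the holes without skipping a value. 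Granting this, the four cardinalities above, together with the reflection, complete the proof.
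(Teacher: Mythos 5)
Your proposal is correct and follows essentially the same route as the paper: direct computation of the explicit structure of $h^{\wedge}A$ (minimum, maximum, and the single skipped value $\min+1$ when $x\in\{h-1,h\}$) for the base cases $x\in[1,h-2]$, $x=h-1$, $x=h$, $x\in[h+1,k-h-2]$, followed by the reflection $a\mapsto (k+1)-a$ (equivalently $x\mapsto k-1-x$) to dispose of cases (ii) and the second members of (iii) and (iv); all your endpoint formulas and the resulting cardinalities agree with the paper's. The only difference is one of rigor, in your favor: the paper dismisses the interval-filling claim with ``it is easy to see,'' whereas you isolate it as the real content and sketch the unit-step swap argument that justifies it.
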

	
	\begin{proof}
		\begin{enumerate}
			\item [\upshape(i)]
			If $x \in [1, h-2]$, then $A=[0,x-1] \cup \{x+1\} \cup [x+3,k+1]$.
			Clearly, \begin{center}
				$h^{\wedge}A =
				\bigg[\dfrac{(h+1)(h+2)}{2}-(2x+2), hk - \dfrac{h^2-3h}{2}\bigg].$
			\end{center}
			Therefore, $|h^{\wedge}A|=	hk-h^2+2x+2 .$
			\item[\upshape(ii)]  If $ x \in [k-h+1,k-2]$, then
			\begin{align*}
				A &=[0,x-1] \cup\{x+1\} \cup [x+3,k+1]\\
				&=(k+1)- ([0,k-x-2] \cup \{k-x\} \cup  [k-x+2,k+1])\\
				&= (k+1)- ([0,y-1] \cup \{y+1\} \cup  [y+3,k+1]),
			\end{align*}   
			where $y=k-x-1 \in [1,h-2].$ Since the cardinality of $h^{\wedge}A$ is translation and dilation invariant, we have 
			\[|h^{\wedge}A| = |h^{\wedge}([0,y-1] \cup \{y+1\} \cup  [y+3,k+1]),~\text{where}~ y \in [1,h-2].\] Therefore,  from the previous case, we have \begin{align*}
				|h^{\wedge}A|&=hk-h^2+2y+2 \\&=hk-h^2+2(k-x-1)+2 \\& =(h+2)k-h^2-2x.
			\end{align*}
			
			\vspace{.1cm}
			\item 	[\upshape(iii)]  \begin{enumerate}
				\item 
				If $x=h-1$,  then $A=[0,h-2] \cup \{h\} \cup [h+2,k+1]$. It is easy to see that 
				$\dfrac{(h-1)(h-2)}{2}+h+1 \notin h^{\wedge}A$ and   
				\begin{center}
					$h^{\wedge}A =\bigg\{\dfrac{(h-1)(h-2)}{2}+h \bigg \} \bigcup \bigg[ \dfrac{(h-2)(h- 1)}{2}+h+2, hk-\dfrac{h(h-3)}{2}\bigg] .$
				\end{center}
				Therefore,  $|h^{\wedge}A| = hk-h^2+2h-1.$
				
				\item 	If $x=k-h$, then  \begin{align*}
					A &=[0,k-h-1] \cup \{k-h+1\} \cup [k-(h-3),  k+1]\\ & = (k+1)-([0,h-2] \cup \{h\} \cup [h+2,k+1]).
				\end{align*} 
				Since the cardinality of $h^{\wedge}A$ is translation and dilation invariant, we have 
				$$|h^{\wedge}A|= |h^{\wedge}([0,h-2] \cup \{h\} \cup  [h+2,k+1])|.$$   Therefore, in both the cases, we have   $|h^{\wedge}A| = hk-h^2+2h-1.$
			\end{enumerate}
			
			\item 	[\upshape(iv)]  \begin{enumerate}
				\item 
				If $x=h$,  then $A=[0,h-1] \cup \{h+1\} \cup [h+3,k+1]$. It is easy to observe that $\dfrac{h(h- 1)}{2}+1 \notin h^{\wedge}A$ and  \begin{center}$h^{\wedge}A =\bigg\{\dfrac{h(h-1)}{2} \bigg\} \bigcup \bigg[ \dfrac{h(h- 1)}{2}+2, hk-\dfrac{h(h-3)}{2}\bigg].$\end{center}
				Therefore,   $|h^{\wedge}A| = hk-h^2+2h.$
				\item 	If $x=k-h-1$, then we have \begin{align*}
					A&=[0,k-h-2] \cup\{k-h\} \cup [k-(h-2),k+1]\\
					&= (k+1)-([0,h-1] \cup \{h+1\} \cup [h+3,k+1]).
				\end{align*} Since the cardinality of $h^{\wedge}A$ is translation and dilation invariant, we have 
				\begin{align*}|h^{\wedge}A|= |h^{\wedge}([0,h-1] \cup \{h+1\} \cup [h+3,k+1])|. \end{align*}  Therefore, in both the cases, we have  $$ |h^{\wedge}A| = hk-h^2+2h.$$
			\end{enumerate}
			
			\item 	[\upshape(v)]   If $x \in [h+1,k-h-2]$, then $A=[0,x-1] \cup \{x+1\} \cup [x+3,k+1]$. Clearly, \begin{center}$h^{\wedge}A =\bigg[ \dfrac{h(h-1)}{2}, hk-\dfrac{h^2-3h}{2}\bigg].$
			\end{center} Therefore, $ |h^{\wedge}A| = hk-h^2+2h+1.$
		\end{enumerate}
	\end{proof}
	
	\noindent In the following proposition we compute the cardinality of $h^{\wedge}A$ when $A=[0,k+1] \setminus \{x,k\}$, where $x\in [1, k-3].$

	\begin{prop} \label{prop1.4}
		Let $ h \geq 3$ and $k \geq 3h+3 $ be positive integers. Let  \[A=[0,k+1] \setminus \{x,k\}, ~\text{where}~ x\in [1, k-3].\]
		\begin{enumerate}
			\item [\upshape(i)] If $x \in [1, h-1]$, then  $|h^{\wedge}A|= hk-h^2+x+2.$
			
			\item [\upshape(ii)]    If $x \in \{h,k-h\}$, then $|h^{\wedge}A|= hk-h^2+h+1.$
			
			\item [\upshape(iii)]  If $x \in [h+1, k-h-1]$, then $|h^{\wedge}A|=hk-h^2+h+2.$

			\item [\upshape(iv)]  If $x \in [k-(h-1), k-3]$, then $|h^{\wedge}A|=(h+1)k-h^2-x+2.$

		\end{enumerate} 
	\end{prop}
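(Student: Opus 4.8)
The plan is to pin down $h^{\wedge}A$ explicitly in each of the four ranges of $x$ and then simply count, exactly in the spirit of the three preceding propositions. Note first that, unlike the deleted pairs $\{x,x+1\}$ and $\{x,x+2\}$ treated above, the pair $\{x,k\}$ is not invariant under the reflection $a \mapsto (k+1)-a$ (it is sent to $\{1,k+1-x\}$), so the four ranges must genuinely be handled one by one rather than collapsed to a single case. The structural observation I would exploit is the decomposition $A = C \cup \{k+1\}$ with $C = [0,k-1]\setminus\{x\}$: here $C$ is an interval of $k$ integers with a single point deleted, $\max C = k-1$, and the surviving top element $k+1$ sits two units above $\max C$ because $k$ itself is absent. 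Splitting the $h$-subsets of $A$ according to whether they use $k+1$ gives
\[
h^{\wedge}A = h^{\wedge}C \cup \big(k+1+(h-1)^{\wedge}C\big).
\]
Since $C$ is an interval minus a point, both $h^{\wedge}C$ and $(h-1)^{\wedge}C$ are integer intervals with at most one omitted value, and their explicit descriptions are exactly those produced in the proof of Proposition \ref{one element EIT prop} with $k$ replaced by $k-1$.

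The first step is to read off the two endpoints $\min h^{\wedge}A$ and $\max h^{\wedge}A$ from the extremal $h$-element subsets of $A$. The $h$ smallest elements are $\{0,1,\dots,h-1\}$ when $x\ge h$, with sum $\tfrac{h(h-1)}{2}$, and $\{0,\dots,x-1\}\cup\{x+1,\dots,h\}$ when $x\le h-1$, with sum $\tfrac{h(h+1)}{2}-x$. The $h$ largest elements are $\{k+1\}\cup\{k-1,\dots,k-h+1\}$ when $x\le k-h$, with sum $hk+1-\tfrac{h(h-1)}{2}$, and $\{k+1\}\cup\big([k-h,k-1]\setminus\{x\}\big)$ when $x\ge k-h+1$, with sum $(h+1)k+1-\tfrac{h(h+1)}{2}-x$. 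Substituting the appropriate pair of endpoints in each range and assuming for the moment that $h^{\wedge}A$ is the full integer interval between them already yields the four claimed cardinalities in cases (i), (iii), (iv), and yields one more than the claimed value in case (ii); hence in case (ii) exactly one interior value must be missing.

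The second and main step is to justify that the sumset really is this interval, with the single deletion occurring precisely in case (ii). Away from the two ends this is the usual sliding argument: because the block $[0,x-1]\cup[x+1,k-1]$ is a long run of consecutive integers (guaranteed by $k\ge 3h+3$), any attained sum below the maximum can be increased by one by replacing a chosen element with the next available one, so the attained values are consecutive except possibly near $\min h^{\wedge}A$ and near $\max h^{\wedge}A$, where the deletion of $x$ and the absence of $k$ come into play. In case (i) the deleted $x$ merely raises the floor while $k+1$ cleanly extends the ceiling; in cases (iii) and (iv) the hole at $x$ is either interior or already skipped inside the top block, so in all three no value is lost. In case (ii), however, the hole sits at a critical position: for $x=h$ the value $\tfrac{h(h-1)}{2}+1$ is unreachable, since raising the minimal sum $\tfrac{h(h-1)}{2}$ by one would require exchanging $h-1$ for the absent $h$, whereas $\tfrac{h(h-1)}{2}+2$ and everything above are attainable; dually, for $x=k-h$ the value $\max h^{\wedge}A-1$ is unreachable, since it would require exchanging $k-h+1$ for the absent $k-h$, whereas its two neighbours are attainable. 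Either way exactly one value drops out, giving $hk-h^2+h+1$.

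The step I expect to be the main obstacle is precisely this end analysis in case (ii): one must show both that the flagged value is genuinely unattainable and that no other value is lost, in particular that $\max h^{\wedge}A-1$ is attainable when $x=h$ and that $\tfrac{h(h-1)}{2}+1$ is attainable when $x=k-h$. The cleanest way to make this airtight is through the decomposition above. For $x=k-h$ the omitted value comes entirely from $(h-1)^{\wedge}C$, whose critical deletion lands on its own top element (because $k-h=(k-1)-(h-1)$), so that $k+1+(h-1)^{\wedge}C$ contributes an isolated maximum separated by a single gap, while $h^{\wedge}C$, whose critical index $k-h-1$ is not equal to $x$, is a full interval that covers everything below the gap. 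The sub-case $x=h$ is handled by the mirror bookkeeping, the deletion now arising inside $h^{\wedge}C$ at its bottom. Once these explicit sets are assembled, the four cardinalities follow by direct counting, completing the proof.
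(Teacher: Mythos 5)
Your proposal is correct and arrives at exactly the same explicit descriptions of $h^{\wedge}A$ as the paper, but by a genuinely different route. The paper treats the four ranges directly: in each case it writes $A$ out, asserts by inspection the interval structure of $h^{\wedge}A$ — noting that $\tfrac{h(h-1)}{2}+1$ is the unique missing value when $x=h$ and that $hk-\tfrac{h(h-1)}{2}$ is the unique missing value when $x=k-h$ — and then counts. You instead split every $h$-subset according to whether it contains $k+1$, obtaining $h^{\wedge}A = h^{\wedge}C \cup \bigl(k+1+(h-1)^{\wedge}C\bigr)$ with $C=[0,k-1]\setminus\{x\}$, and import the structure of both pieces from Proposition \ref{one element EIT prop} (with $k-1$ in place of $k$, and $h-1$ in place of $h$ for the second piece). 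This is more modular and arguably more rigorous: it replaces the paper's ``it is evident'' assertions by appeals to an established result, and it locates the single gap in case (ii) automatically — for $x=k-h$ the gap is inherited from the critical case of $(h-1)^{\wedge}C$ (since $k-h=(k-1)-(h-1)$), for $x=h$ from the critical case of $h^{\wedge}C$ — while the overlap of the two pieces (valid because $k\ge 3h+3$) guarantees nothing else is lost. Two points should be made explicit for the argument to be airtight. First, when $h=3$ the second piece is $2^{\wedge}C$, which falls outside the stated hypothesis $h\ge 3$ of Proposition \ref{one element EIT prop}, so you must verify the $h=2$ analogue of those descriptions directly (this is immediate, but it is not covered by the citation). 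Second, in the boundary subcases $x=h-1$ (inside your range (i)) and $x=k-h-1$ (inside your range (iii)), one of the two pieces has a gap of its own: $(h-1)^{\wedge}C$ misses $\tfrac{(h-1)(h-2)}{2}+1$ when $x=h-1$, and $h^{\wedge}C$ misses the value just below its maximum when $x=k-h-1=(k-1)-h$. In both situations the other piece covers the missing value — this follows from the same overlap estimate you use — but your prose (``the deleted $x$ merely raises the floor,'' ``no value is lost'') glosses over these subcases, and they are exactly where a careless assembly of the two pieces could go wrong.
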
   
	
	\begin{proof}
		\begin{enumerate}
			\item [\upshape(i)]
			If $x \in [1, h-1]$, then $A=[0,x-1]\cup [x+1,k-1] \cup \{k+1\}$. It is evident that \begin{center}
				$h^{\wedge}A =
				\bigg[\dfrac{h(h+1)}{2}-x, hk+1 - \dfrac{h(h-1)}{2}\bigg].$
			\end{center} 
			Therefore, $|h^{\wedge}A|=	hk-h^2+x+2.$
				
			\item 	[\upshape(ii)]
			\begin{enumerate}
				\item 
				If $x =h$, then $A=[0,h-1]\cup [h+1,k-1] \cup \{k+1\}$.  It is easy to observe that $\dfrac{h(h-1)}{2} +1 \notin h^{\wedge}A$ and
				\begin{center}
					$h^{\wedge}A =
					\bigg\{\dfrac{h(h-1)}{2}\bigg\} \bigcup	\bigg[\dfrac{h(h-1)}{2}+2, hk+1 - \dfrac{h(h-1)}{2}\bigg].$
				\end{center} 
			Therefore, we have $|h^{\wedge}A|=hk-h^2+h+1.$

				\item If $x =k-h$, then $A=[0,k-h-1]\cup [k-h+1,k-1] \cup \{k+1\}$. It follows that  $hk - \dfrac{h(h-1)}{2} \notin h^{\wedge}A$ and
				\begin{center}
					$h^{\wedge}A =
					\bigg[\dfrac{h(h-1)}{2}, hk - \dfrac{h(h-1)}{2} -1\bigg] \bigcup \bigg\{ hk - \dfrac{h(h-1)}{2} +1\bigg\}.$
				\end{center}
				Therefore, we have $|h^{\wedge}A|=hk-h^2+h+1.$
			\end{enumerate}

			\item 	[\upshape(iii)]   If $x \in [h+1,k-h-1]$, then $A=[0,x-1] \cup [x+1,k-1] \cup \{k+1\}$. 
			Clearly,  \begin{center}
				$h^{\wedge}A =\bigg[ \dfrac{h(h-1)}{2}, hk+1-\dfrac{h(h-1)}{2}\bigg].$
			\end{center} Therefore, 
			$|h^{\wedge}A| = hk-h^2+h+2.$
			
			\item [\upshape(iv)]
			If $x \in [k-(h-1), k-3]$, then $A=[0,x-1]\cup [x+1,k-1] \cup \{k+1\}$.
			It is easy to  observe that 
			\begin{center}
				$h^{\wedge}A =
				\bigg[\dfrac{h(h-1)}{2}, (h+1)k+1- \dfrac{h(h+1)}{2}-x\bigg].$
			\end{center} 
			Therefore, $|h^{\wedge}A|=	hk-h^2+k+2-x.$
		\end{enumerate}
	
	\end{proof}
	
	\noindent In the following proposition, we compute the cardinality of $h^{\wedge}A$ when $A=[0,k+1] \setminus \{1,y\}$, where $y\in [4, k].$ Since $h^{\wedge}A$ is translation and dilation invariant, we have $|h^{\wedge}A|=|h^{\wedge}(k+1-A)|=||h^{\wedge}([0,k+1] \setminus \{x,k\})|$, where $x\in [1, k-3].$ Therefore,  the proof of the following proposition follows from Proposition \ref{prop1.4}.
	
	\begin{prop}
		Let $h \geq 3$ and $k \geq 3h-3$ be positive integers. Let $$A=[0,k+1] \setminus \{1,y\}, ~ \text{where}~ y\in [4, k].$$
		\begin{enumerate}
			\item[ \upshape (i)] If $y \in [k-(h-2),k]$, then $|h^{\wedge}A|= (h+1)k-h^2-y+3.$
			\item[ \upshape (ii)] If $y\in \{h+1,k-(h-1)\}$, then $|h^{\wedge}A|=hk-h^2+h+1.$
			
			\item[ \upshape (iii)] If $y \in [h+2,k-h]$, then $|h^{\wedge}A|=hk-h^2+h+2.$
			\item[ \upshape (iv)] If $y \in [4,h]$, then $|h^{\wedge}A|= (h-1)k-h^2+y+1.$
		\end{enumerate}
		
	\end{prop}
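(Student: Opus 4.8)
The plan is to avoid any fresh sumset computation and instead reduce the statement entirely to Proposition~\ref{prop1.4} via the reflection $a\mapsto k+1-a$, exactly as flagged in the sentence preceding the proposition. In other words, the whole proof should be a change of variables plus careful bookkeeping of how the four subintervals of the deleted element transform, so that each of the four cases here is matched against a case of Proposition~\ref{prop1.4}.

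First I would reflect the set. With $A=[0,k+1]\setminus\{1,y\}$, the affine map $a\mapsto k+1-a$ fixes $[0,k+1]$ as a set, sends the deleted point $1$ to $k$, and sends the deleted point $y$ to $k+1-y$. Hence $k+1-A=[0,k+1]\setminus\{x,k\}$ where $x:=k+1-y$, and as $y$ runs through $[4,k]$ the parameter $x$ runs through $[1,k-3]$, which is precisely the admissible range in Proposition~\ref{prop1.4}. Since $|h^{\wedge}(\,\cdot\,)|$ is invariant under dilation and translation, we have $|h^{\wedge}A|=|h^{\wedge}(k+1-A)|=|h^{\wedge}([0,k+1]\setminus\{x,k\})|$, so the value can be read straight off Proposition~\ref{prop1.4} with $x=k+1-y$.

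It then remains to rewrite each of the four cases of Proposition~\ref{prop1.4} in the variable $y=k+1-x$. The range $x\in[1,h-1]$ corresponds to $y\in[k-(h-2),k]$, and the formula $hk-h^2+x+2$ becomes $(h+1)k-h^2-y+3$, which is case (i). The pair $x\in\{h,k-h\}$ corresponds to $y\in\{h+1,k-(h-1)\}$ with constant value $hk-h^2+h+1$, which is case (ii). The range $x\in[h+1,k-h-1]$ corresponds to $y\in[h+2,k-h]$ with value $hk-h^2+h+2$, which is case (iii). Finally $x\in[k-(h-1),k-3]$ corresponds to $y\in[4,h]$, and substituting $x=k+1-y$ into $(h+1)k-h^2-x+2$ yields $hk-h^2+y+1$, which is case (iv).

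I do not anticipate a deep obstacle; once the reflection is in place the argument is mechanical. The two points needing care are: (a) confirming that the five pieces $[1,h-1]$, $\{h\}$, $[h+1,k-h-1]$, $\{k-h\}$, $[k-h+1,k-3]$ genuinely tile $[1,k-3]$ and translate to a tiling of $[4,k]$, which forces the size hypothesis on $k$ to be at least that of Proposition~\ref{prop1.4}, so it is really $k\geq 3h+3$ (rather than the weaker $k\geq 3h-3$ written in the statement) that legitimises the reduction; and (b) the endpoint arithmetic in case (iv), which is the one place a bookkeeping slip is easy, since the reflection turns the coefficient $(h+1)k$ into $hk$ and reverses the orientation of the interval, so both endpoints and the coefficient of $k$ must be tracked simultaneously.
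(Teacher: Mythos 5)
Your proposal is exactly the paper's proof: the paper handles this proposition with the same reflection $|h^{\wedge}A|=|h^{\wedge}(k+1-A)|=|h^{\wedge}([0,k+1]\setminus\{x,k\})|$, $x=k+1-y$, and then simply cites Proposition \ref{prop1.4}. One substantive remark: your substitution in case (iv) correctly yields $hk-h^2+y+1$ (direct computation confirms this, e.g.\ $h=4$, $y=4$), which does \emph{not} agree with the stated value $(h-1)k-h^2+y+1$; so rather than asserting this ``is case (iv)'', you should flag that the coefficient $(h-1)k$ in the proposition is a typo for $hk$ --- and your observation that the hypothesis must be $k\geq 3h+3$ (inherited from Proposition \ref{prop1.4}) rather than the stated $k\geq 3h-3$ is likewise a correct catch of a second typo.
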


	\noindent In the following proposition, we compute the cardinality of $h^{\wedge}A$ when $A=[0,k+1] \setminus \{x,k-1\},$ where $x\in [1, k-4].$
	
	\begin{prop}\label{prop1.6}
		Let $h \geq 3$ and $k \geq 3h+3$ be positive integers. Let $$A=[0,k+1] \setminus \{x,k-1\}, ~ \text{where}~ x\in [1, k-4].$$
		\begin{enumerate}
			\item[ \upshape (i)] If $x \in [1,h-1]$, then $|h^{\wedge}A|= hk-h^2+x+3.$
			\item[ \upshape (ii)] If $x\in \{h,k-h\}$, then $|h^{\wedge}A|=hk-h^2+h+2.$
			
			\item[ \upshape (iii)] If $x \in [h+1, k-h-2]$, then $|h^{\wedge}A|= hk-h^2+h+3.$
			
			\item[ \upshape (iv)] If $x=k-h-1$, then $|h^{\wedge}A|=
			\begin{cases}
				hk-h^2+h+3, & \text{ if } h \geq 4;\\
				3k-4, & \text{ if } h=3.
			\end{cases}$
			
			\item[\upshape (v)] If $x \in [k-h+1,k-4]$, then $|h^{\wedge}A|= 	(h+1)k-h^2-x+3.$
		\end{enumerate}
		
	\end{prop}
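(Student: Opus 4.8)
The plan is to compute $h^{\wedge}A$ directly in each of the five ranges of $x$, exactly as in the preceding propositions: write $A$ as an explicit union of intervals and isolated points, identify $h^{\wedge}A$ as an interval (or an interval together with one or two isolated points), and read off the cardinality. Throughout I write $L=\frac{h(h-1)}{2}$ for the smallest possible sum $0+1+\cdots+(h-1)$, and I record the two basic endpoints: the minimum of $h^{\wedge}A$ is the sum of the $h$ smallest elements of $A$, and the maximum is the sum of the $h$ largest. Since $x\le k-4$, the elements $k,k+1$ are always present and $k-1$ is the only hole in the top block, so the $h$ largest elements of $A$ are $\{k+1,k,k-2,k-3,\dots,k-h+1\}$ whenever $x$ lies below this block, giving maximal sum $S=hk-L+2$.

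First I would dispose of the generic ranges (i), (iii), (v), where $h^{\wedge}A$ turns out to be a full interval. For (i) ($x\le h-1$) the hole $x$ only lowers the bottom: the $h$ smallest elements are $\{0,1,\dots,h\}\setminus\{x\}$ with sum $\frac{h(h+1)}{2}-x$, while the top is the unaffected $S$; one checks that each consecutive value is attainable by sliding a single summand through the long block $[x+1,k-2]$, so $h^{\wedge}A=\big[\frac{h(h+1)}{2}-x,\ S\big]$ and the count is $hk-h^{2}+x+3$. For (iii) ($h+1\le x\le k-h-2$) the hole sits in the interior and touches neither endpoint, so $h^{\wedge}A=[L,S]$ with cardinality $hk-h^{2}+h+3$. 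For (v) ($k-h+1\le x\le k-4$, which is nonempty only for $h\ge5$) the hole $x$ now lies inside the top block; removing it pushes $k-h$ into the top $h$ elements, raising the maximum to $(h+1)k-\frac{h(h+1)}{2}-x+2$, while the minimum stays at $L$; again a sliding argument (now below $k-h$, where no holes occur) shows the interval is complete, giving $(h+1)k-h^{2}-x+3$. In each of these the only real content is to verify that no interior value is skipped, which follows from the presence of long consecutive blocks in $A$.

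The genuinely delicate ranges are the boundary cases (ii) and (iv), where exactly one value is missing. For $x=h$ the minimum $L$ is attained by $\{0,\dots,h-1\}$, but the next value $L+1$ would force the step $h-1\mapsto h$, which is blocked; the first reachable value above $L$ is $L+2$, so $h^{\wedge}A=\{L\}\cup[L+2,S]$ and the cardinality drops by one to $hk-h^{2}+h+2$. The mirror subcase $x=k-h$ produces the single hole at the \emph{top}: the two deletions $k-1$ and $k-h$ straddle the top block so that $S-1$ admits no representation (every single decrement of a top summand lands on a deleted or already-used value), whence $h^{\wedge}A=[L,S-2]\cup\{S\}$ with the same count. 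Case (iv), $x=k-h-1$, is the crux. For $h\ge4$ the decrement $k-h\mapsto k-h-1$ is blocked, but the block $\{k-2,\dots,k-h+1\}$ has at least two elements, so $S-2$ can be recovered by the internal reshuffle that replaces $k-h+2$ by $k-h$ while keeping $k-h+1$; hence the interval stays full and the count is $hk-h^{2}+h+3$. For $h=3$ the top block degenerates to the single element $k-2$, no such reshuffle exists, and $S-2=3k-3$ is genuinely unattainable, forcing $h^{\wedge}A=[3,3k-4]\cup\{3k-2,3k-1\}$ and the count $3k-4$.

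The main obstacle is exactly this $h=3$ versus $h\ge4$ dichotomy in (iv): one must show that for $h\ge4$ the missing value near the top can always be rerouted through an alternative $h$-subset, whereas for $h=3$ the scarcity of summands makes the gap unavoidable. The same care—pinning down the precise location of the unique omitted value and proving that nothing else is omitted—is what distinguishes the boundary cases (ii) and (iv, $h=3$) from the generic full-interval cases; everywhere else the argument is the routine sliding and verification already used in the earlier propositions, and the remaining bookkeeping reduces to the arithmetic of $L$, $S$, and the stated endpoints.
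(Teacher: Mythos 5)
Your proposal is correct and follows essentially the same route as the paper: in each range of $x$ you identify $h^{\wedge}A$ explicitly as an interval $[L,S]$ (with $L=\frac{h(h-1)}{2}$, $S=hk+2-\frac{h(h-1)}{2}$) or such an interval with exactly one gap ($x=h$, $x=k-h$, and the $h=3$, $x=k-4$ case), and your exchange arguments for where the gaps occur and for the $h\geq 4$ versus $h=3$ dichotomy in (iv) match the paper's computations, including all resulting cardinalities. If anything, you justify the attainability/non-attainability of boundary values more explicitly than the paper, which merely states the sets.
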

	\begin{proof}
		
		\begin{enumerate}
			\item [\upshape(i)]
			
			If $x \in [1, h-1]$, then $A=[0,x-1]\cup [x+1,k-2] \cup \{k,k+1\}$. It is apparent that \begin{center}
				$h^{\wedge}A =
				\bigg[\dfrac{h(h+1)}{2}-x, hk+2 - \dfrac{h(h-1)}{2}\bigg].$
			\end{center}
			Therefore,
				$|h^{\wedge}A|=	hk-h^2+x+3.$
				
			\item 	[\upshape(ii)]
			\begin{enumerate}
				\item 
				If $x =h$, then $A=[0,h-1]\cup [h+1,k-2] \cup \{k,k+1\}$.  Note that $\dfrac{h(h-1)}{2} +1 \notin h^{\wedge}A$ and \begin{center}
					$h^{\wedge}A = \bigg\{\dfrac{h(h-1)}{2} \bigg\}\bigcup
					\bigg[\dfrac{h(h-1)}{2}+2, hk+2 - \dfrac{h(h-1)}{2}\bigg].$
				\end{center}
				Therefore,	$|h^{\wedge}A|=hk-h^2+h+2.$

				\item If $x =k-h$, then $A=[0,k-h-1]\cup [k-h+1,k-2] \cup \{k,k+1\}$.  Note that 
				$hk+1 - \dfrac{h(h-1)}{2}  \notin h^{\wedge}A$. Clearly, \begin{center}
					$h^{\wedge}A =
					\bigg[\dfrac{h(h-1)}{2}, hk - \dfrac{h(h-1)}{2}\bigg] \bigcup \bigg\{ hk +2 - \dfrac{h(h-1)}{2}\bigg\}.$
				\end{center} 
				Therefore,
					$|h^{\wedge}A|=hk-h^2+h+2.$
			\end{enumerate}

			\item 	[\upshape(iii)]   If $x \in [h+1,k-h-2]$, then $A=[0,x-1] \cup [x+1,k-2] \cup \{k,k+1\}$. It is easy to see that \begin{center}
				$h^{\wedge}A =\bigg[ \dfrac{h(h-1)}{2}, hk+2-\dfrac{h(h-1)}{2}\bigg].$
			\end{center} Therefore,  $|h^{\wedge}A| = hk-h^2+h+3.$
			
			\item 	[\upshape(iv)] 
			\begin{enumerate}
				\item 
				If $h=3$ and $x=k-4$, then $A=[0,k-5] \cup \{k-3,k-2,k,k+1\}$. Note that $3k-3 \notin 3^{\wedge}A$ and $3^{\wedge}A=[3,3k-4] \cup \{3k-2,3k-1\}$. Therefore, $|3^{\wedge}A|=3k-4.$
				
				\item If $h\geq 4$ and $x=k-h-1$, then $A=[0,k-h-2] \cup [k-h,k-2] \cup \{k,k+1\}$. It follows that 
				\begin{center}
					$h^{\wedge}A =\bigg[ \dfrac{h(h-1)}{2}, hk+2-\dfrac{h(h-1)}{2}\bigg].$
				\end{center} Therefore, 
				$|h^{\wedge}A| = hk-h^2+h+3.$
			\end{enumerate}

			\item [\upshape(v)]
			
			If $x \in [k-(h-1), k-4]$, then $A=[0,x-1]\cup [x+1,k-2] \cup \{k,k+1\}$. One can see easily 
			\begin{center}
				$h^{\wedge}A =
				\bigg[\dfrac{h(h-1)}{2}, (h+1)k+2- \dfrac{h(h+1)}{2}-x\bigg].$
			\end{center} 
			Therefore, 
				$|h^{\wedge}A|=	hk-h^2+k+3-x.$ 
		\end{enumerate}
	\end{proof}
	
	\noindent In the following proposition, we compute the cardinality of $h^{\wedge}A$ when $A=[0,k+1] \setminus \{2,y\}$, where $y\in [5, k].$ Since the cardinality of  $h^{\wedge}A$ is translation and dilation invariant, we have $|h^{\wedge}A| = |h^{\wedge}(k+1-A)| = |h^{\wedge}[0,k+1] \setminus \{x,k-1\},$ where $x\in [1, k-4].$ Therefore, the proof of the following proposition follows from the Proposition \ref{prop1.6}.
	
	\begin{prop}
		Let $h \geq 3$ and $k \geq 3h+3$ be positive integers. Let $$A=[0,k+1] \setminus \{2,y\},~ \text{where}~ y\in [5, k].$$
		\begin{enumerate}
			\item[ \upshape (i)] If $y \in [k-(h-2),k]$, then $|h^{\wedge}A|= hk-h^2+k-y+4.$
			\item[ \upshape (ii)] If $y\in \{h+1,k-(h-1)\}$, then $|h^{\wedge}A|=hk-h^2+h+2.$
			\item[ \upshape (iii)] If $y \in [h+3,k-h]$, then $|h^{\wedge}A|=hk-h^2+h+3.$
			\item[ \upshape (iv)] If $x=h+2$, then $|h^{\wedge}A|=
			\begin{cases}
				hk-h^2+h+3, & \text{ if } h \geq 4;\\
				3k-4, & \text{ if } h=3.
			\end{cases}$
			
			\item[ \upshape (v)] If $y \in [5,h]$, then $|h^{\wedge}A|= hk-h^2+y+2.$
		\end{enumerate}
		
	\end{prop}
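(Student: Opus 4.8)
The plan is to prove this proposition as a direct corollary of Proposition \ref{prop1.6} via the reflection $a \mapsto (k+1)-a$, exactly as indicated in the sentence preceding the statement. First I would compute the image of $A = [0,k+1]\setminus\{2,y\}$ under this reflection. Since $a \mapsto (k+1)-a$ permutes $[0,k+1]$, sends $2$ to $k-1$, and sends $y$ to $k+1-y$, I obtain
\[
(k+1)-A = [0,k+1]\setminus\{\,k-1,\;k+1-y\,\}.
\]
Writing $x := k+1-y$ and noting that $y \in [5,k]$ forces $x \in [1,k-4]$, this is precisely a set of the form $[0,k+1]\setminus\{x,k-1\}$ treated in Proposition \ref{prop1.6}.

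Next I would invoke the dilation--translation invariance of $|h^{\wedge}A|$ recorded in the introduction: because $(k+1)-A = (-1)\ast A + (k+1)$, we have $h^{\wedge}((k+1)-A) = (-1)\ast(h^{\wedge}A) + h(k+1)$, and hence $|h^{\wedge}A| = |h^{\wedge}((k+1)-A)|$. Thus the value of $|h^{\wedge}A|$ can simply be read off from Proposition \ref{prop1.6} applied with parameter $x = k+1-y$, and no fresh sumset computation is required.

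The remaining work is bookkeeping: I would translate each case of Proposition \ref{prop1.6} back through the substitution $x = k+1-y$. The range $x\in[1,h-1]$ corresponds to $y\in[k-(h-2),k]$ and gives $hk-h^2+x+3 = hk-h^2+(k-y)+4$, yielding (i); the exceptional values $x\in\{h,k-h\}$ correspond to $y\in\{k-(h-1),h+1\}$ and give $hk-h^2+h+2$, yielding (ii); the middle range $x\in[h+1,k-h-2]$ corresponds to $y\in[h+3,k-h]$ with value $hk-h^2+h+3$, yielding (iii); the single value $x=k-h-1$ corresponds to $y=h+2$ and carries the split answer $hk-h^2+h+3$ for $h\geq 4$ versus $3k-4$ for $h=3$, yielding (iv) (here the statement's ``$x=h+2$'' should read ``$y=h+2$''); and finally $x\in[k-h+1,k-4]$ corresponds to $y\in[5,h]$ with $(h+1)k-h^2-x+3 = hk-h^2+y+2$, yielding (v).

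The only step requiring genuine care---and the nearest thing to an obstacle---is checking that the interval endpoints transform consistently, i.e.\ that the five $x$-ranges of Proposition \ref{prop1.6} map onto the five $y$-ranges claimed here with no gaps or overlaps. This reduces to a short monotone verification, since $y \mapsto k+1-y$ is the order-reversing bijection between $[5,k]$ and $[1,k-4]$; it sends the endpoints $y=5,h,h+1,h+2,h+3,\dots,k-h,k-(h-1),k-(h-2),k$ to $x=k-4,k-h+1,k-h,k-h-1,k-h-2,\dots,h+1,h,h-1,1$, confirming the case-by-case correspondence above. Deducing the result this way avoids redoing the structural analysis of $h^{\wedge}A$ and is the whole point of phrasing the proposition in terms of $\{2,y\}$ so that its reflection lands in the already-established $\{x,k-1\}$ case.
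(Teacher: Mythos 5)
Your proof is correct and takes essentially the same route as the paper: the paper likewise obtains this proposition by reflecting $A$ through $a \mapsto (k+1)-a$, invoking the translation--dilation invariance of $|h^{\wedge}A|$, and reading off the values from Proposition \ref{prop1.6} under the substitution $x = k+1-y$, which is exactly your case-by-case correspondence (and your remark that ``$x=h+2$'' in part (iv) should read ``$y=h+2$'' correctly identifies a typo in the statement).
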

	
	\noindent Proposition \ref{gen {x,y} deletion prop} demonstrates the cardinality of $h^{\wedge}A$ when $A=[0,k+1] \setminus \{x,y\},~ \text{where} ~ x,y \in [3,k-2] ~ \text{and} ~ y-x \geq 3.$ 
	\begin{prop}\label{gen {x,y} deletion prop}
		Let $h \geq 3$ and $k \geq 3h+3$ be positive  integers. Let $$A=[0,k+1] \setminus \{x,y\},~ \text{where} ~ x,y \in [3,k-2] ~ \text{and} ~ y-x \geq 3.$$
		\begin{enumerate}
			\item[\upshape (i)] If $h\geq 6$, $x \in [3,h-3]$ and $y \in [6,h]$,  then $|h^{\wedge}A|= hk-h^2+x+y.$ 
			\item[\upshape (ii)] If $h\geq 5$ and $\{x,y\}=\{r,h+1\}$ where $r \in [3,h-2]$,  then $|h^{\wedge}A|= hk-h(h-1)+r.$
			\item [\upshape (iii)] If $h\geq 4$, $x \in [3,h-1]$ and $y \in [h+2,k-h]$, then $|h^{\wedge}A|=hk-h(h-1)+x+1.$
			\item[\upshape (iv)] If $h\geq 4$ and $\{x,y\}=\{r,k-h+1\}$ where $r \in [3,h-1]$, then $|h^{\wedge}A|= hk-h(h-1)+r.$
			
			\item[\upshape (v)] If $h\geq 4$, $x\in [3,h-1]$ and $y \in [k-(h-2),k-2]$, then $|h^{\wedge}A|=(h+1)k-h^2+x-y+2.$
			\item[\upshape (vi)] If $\{x,y\}$ is one of the sets $\{h,s\}$ with $s \in [h+3,k-h]$, $\{r,k-h+1\}$ with $r \in [h+1, k-h-2]$, then 
			$|h^{\wedge}A|= hk-h^2+2h.$
			\item[\upshape (vii)] If   $\{x,y\} = \{h,k-h+1\}$, then $|h^{\wedge}A|= hk-h^2+2h-1.$
			\item [\upshape (viii)] If $h\geq 4$ and $\{x,y\}=\{h,s\}$ where $s \in [k-h+2,k-2]$, then $|h^{\wedge}A|= (h+1)k-h(h-1) -s +1.$
			\item[\upshape (ix)] If $x \in [h+1,k-h-3]$ and $y \in [h+4,k-h]$, then $|h^{\wedge}A|= hk-h^2+2h+1$.
			\item [\upshape (x)] If $h\geq 4$, $x \in [h+1,k-h-1)]$ and $y \in [k-h+2,k-2]$, then $|h^{\wedge}A|=(h+1)k-h(h-1)-y+2.$
			\item [\upshape (xi)] If $h\geq 5$ and $\{x,y\}=\{k-h,s\}$ where $ s \in [k-h+3,k-2]$, then $|h^{\wedge}A|= (h+1)k-h(h-1) -s +1.$
			\item[ \upshape (xii)] If $h\geq 6$, $x \in [k-h+1,k-5]$, and $y \in [k-h+4,k-2]$, then $|h^{\wedge}A|= (h+2)k-h^2-x-y+2. $
		\end{enumerate}
	\end{prop}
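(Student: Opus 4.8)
The plan is to handle all twelve items by a single three-step recipe and to cut the casework roughly in half using the reflection $A\mapsto (k+1)-A$. In each case I would first write $A$ explicitly as a disjoint union of integer intervals obtained by deleting $x$ and $y$ from $[0,k+1]$, and then read off the two quantities that govern $h^{\wedge}A$: the value $\sigma_{\min}$, the sum of the $h$ smallest elements of $A$, and $\sigma_{\max}$, the sum of the $h$ largest. The guiding fact is that for a genuine interval one has $h^{\wedge}[0,m]=\left[\tfrac{h(h-1)}{2},\,hm-\tfrac{h(h-1)}{2}\right]$, a full interval; after deleting two interior points $A$ is still dense enough that $h^{\wedge}A$ equals $[\sigma_{\min},\sigma_{\max}]$ except possibly for a bounded number of isolated gaps lying immediately above $\sigma_{\min}$ or immediately below $\sigma_{\max}$. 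Since $A\subseteq[0,k+1]$, here $\sigma_{\max}=h(k+1)-\tfrac{h(h-1)}{2}$ when both deletions are away from the top, so the ``full interval'' length is $\sigma_{\max}-\sigma_{\min}+1=hk-h^{2}+2h+1$, which already matches item (ix).

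Before grinding through the arithmetic I would exploit that $|h^{\wedge}A|=|h^{\wedge}((k+1)-A)|$ by translation and dilation invariance; the reflection sends the deleted pair $\{x,y\}$ to $\{k+1-y,\,k+1-x\}$. Checking the index ranges shows this pairs (i) with (xii), (ii) with (xi), (iii) with (x), and (iv) with (viii), sends each of (v), (vii), (ix) to itself, and interchanges the two families inside (vi). Hence it suffices to establish, say, (i), (ii), (iii), (v), (vi), (vii), (ix), and to verify that the reflected count reproduces the claimed formula in the paired item; this is the source of the asymmetric $+x+y$ versus $-x-y$, and of the $(h+1)k$ and $(h+2)k$ shifts, in the remaining cases.

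The technical core is the gap analysis. For the bulk I would use an increment argument: if $S\subseteq A$ with $|S|=h$ and $\sum S=n<\sigma_{\max}$, and some $a\in S$ admits $a+1\in A\setminus S$, then $(S\setminus\{a\})\cup\{a+1\}$ represents $n+1$. Because $k\geq 3h+3$ and $y-x\geq 3$ force $A$ to contain long runs of consecutive integers away from both deleted points, such an increment-by-one is always available in the interior, so $[\sigma_{\min},\sigma_{\max}]$ has no interior holes. It then remains to settle the extreme values: a deletion strictly inside $[h+1,k-h]$ neither moves an endpoint nor makes a hole; a low deletion with $x\leq h-1$ (or high with $y\geq k-h+2$) only pushes $\sigma_{\min}$ (resp.\ $\sigma_{\max}$) up (resp.\ down) without a hole, since the minimal configuration can still be incremented; but a borderline deletion at exactly $x=h$ (resp.\ $y=k-h+1$) forces the unique minimal representation $\{0,1,\dots,h-1\}$ and blocks the very next value $\tfrac{h(h-1)}{2}+1$, deleting exactly one element, and symmetrically at the top. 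Subtracting one from $hk-h^{2}+2h+1$ for each borderline deletion then reproduces (vi) and (vii) at once, while the low/high deletions feed the $+x$, $+r$, $-y$, $-s$ corrections of the other items.

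The step I expect to be most delicate is precisely this boundary bookkeeping together with the edge cases where both deleted points crowd one end. When $x$ and $y$ are both small (item (i), which needs $h\geq 6$) or both large (item (xii)), the block $[0,h+1]$ must absorb two deletions yet still leave the $h$ elements that fix $\sigma_{\min}$; I would check carefully that the smallest $h$ elements are then $\{0,\dots,h+1\}\setminus\{x,y\}$ and that the increment $a\mapsto a+1$ never lands on the second deleted point, which is exactly what the hypotheses $y-x\geq 3$ and the explicit index bounds of each item guarantee. The remaining labor—confirming case by case that no second, deeper hole appears and that the claimed closed form equals $\sigma_{\max}-\sigma_{\min}+1$ minus the gap count—is routine but unavoidable.
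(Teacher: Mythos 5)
Your overall strategy coincides with the paper's: write $A$ explicitly as a union of intervals, determine $h^{\wedge}A$ directly as an interval $[\sigma_{\min},\sigma_{\max}]$ with at most a couple of gaps adjacent to its endpoints, and use the reflection $A\mapsto (k+1)-A$ to halve the casework. Indeed the paper pairs the cases exactly as you do: (viii) is reduced to (iv), (x) to (iii), (xi) to (ii), (xii) to (i), and the second family of (vi) to the first. So the method itself is sound and is the one actually used.

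The genuine problem is your classification of where the gaps occur. You assert that a gap arises only from a deletion at exactly $x=h$ (or $y=k-h+1$), that a low deletion $x\le h-1$ merely raises $\sigma_{\min}$ ``without a hole,'' and that a deletion in $[h+1,k-h]$ changes nothing. Apply this to item (ii), $\{x,y\}=\{r,h+1\}$ with $r\in[3,h-2]$: your rules predict no gap, hence $|h^{\wedge}A|=\sigma_{\max}-\sigma_{\min}+1=hk-h(h-1)+r+1$, which contradicts the claimed value $hk-h(h-1)+r$. In fact there is a gap at $\sigma_{\min}+1$: the $h$ smallest elements of $A$ are $[0,h]\setminus\{r\}$, with sum $\tfrac{h(h+1)}{2}-r$, and any other $h$-element subset must trade some element of $[0,h]\setminus\{r\}$ for an element $\ge h+2$ (since $h+1\notin A$), raising the sum by at least $2$; so $\tfrac{h(h+1)}{2}-r+1\notin h^{\wedge}A$. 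The idea you are missing is that the ``borderline'' site is relative to the set remaining after the other deletion: once some $r\le h-2$ is removed, the minimal configuration already reaches up to $h$, so the critical position shifts from $h$ to $h+1$. The same correction is needed in the mirror case (xi), which your reflection step would faithfully transport. Separately, your increment-by-one lemma needs the standard strengthening — a particular representation of $n$ may be blocked (every maximal run in it ending just below a deleted point or at $k+1$) while $n+1$ is still representable from a different subset, and conversely a gap at $n+1$ requires \emph{all} representations of $n$ to be blocked — but that repair is routine; the misplaced borderline in (ii) and (xi) is the error that would actually produce wrong counts.
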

	\begin{proof}
		\begin{enumerate}
			\item [(i)]
			If $x \in [3, h-3]$ and $y \in [6,h]$, then $A=[0,x-1]\cup [x+1,y-1] \cup  [y+1,k+1]$. It is easy to see that 
			\begin{center}
				$h^{\wedge}A =
				\bigg[\dfrac{(h+1)(h+2)}{2}-(x+y), hk - \dfrac{h(h-3)}{2}\bigg].$
			\end{center} 
			Therefore, we have 
			$|h^{\wedge}A|=	hk-h^2+x+y.$
			\item [(ii)]
			If $\{x,y\}=\{r,h+1\}$, where $r \in [3,h-2]$, then 
			\vspace{.1cm}
			\begin{center}
				$A=[0,r-1]\cup [r+1,h] \cup [h+2,k+1].$
			\end{center}  	\vspace{.1cm}
			Note that $\dfrac{h(h+1)}{2}-r+1 \notin h^{\wedge}A$ and
			\begin{center}
				$h^{\wedge}A=\left\{\dfrac{h(h+1)}{2}-r\right\} \bigcup \left[\dfrac{h(h+1)}{2}-r+2, hk - \dfrac{h(h-3)}{2}\right].$
			\end{center}
			Therefore, $|h^{\wedge}A|=hk-h^2+h+r.$	
			
			\item[\upshape(iii)] If $x \in [3,h-1]$ and $y \in [h+2,k-h]$, then $A=[0,x-1] \cup [x+1,y-1] \cup [y+1,k+1]$. It is easy to see that 
			\begin{center}
				$h^{\wedge}A= \left[\dfrac{h(h+1)}{2}-x, hk-\dfrac{h(h-3)}{2}\right]$.
			\end{center}
			Therefore, we get that 
			$|h^{\wedge}A|= hk-h(h-1)+x+1.$
			
			\item[(iv)]	If $\{x,y\}=\{r,k-h+1\}$ where $r \in [3,h-1]$, then
			\vspace{.1cm}
			\begin{center}
				$A=[0,r-1]\cup [r+1,k-h]\cup [k-h+2,k+1].$
			\end{center} 	\vspace{.1cm}
			Note that $hk- \dfrac{h(h-3)}{2}-1 \notin h^{\wedge}A$ and
			\vspace{.1cm}
			\begin{center}
				$h^{\wedge}A=\left[\dfrac{h(h+1)}{2}-r, hk - \dfrac{h(h-3)}{2}-2\right] \cup \left\{hk- \dfrac{h(h-3)}{2}\right\}.$
			\end{center}
			\noindent Therefore, 
			$|h^{\wedge}A|=hk-h^2+h+r.$
			
			\item [\upshape(v)]
			If $x \in [3,h-1]$ and $y \in [k-h+2,k-2]$, then \begin{center}
				$A=[0,x-1]\cup [x+1,y-1] \cup [y+1,k+1].$
			\end{center} It is easy to see that 
			\begin{center}
				$h^{\wedge}A=\bigg[\dfrac{h(h+1)}{2}-x, (h+1)k +1 - \dfrac{h(h-1)}{2}-y\bigg].$
			\end{center} Therefore, we have $|h^{\wedge}A|=(h+1)k-h^2+x-y+2.$
			
			\item[(vi)] 
			\begin{enumerate}
				\item If $\{x,y\}=\{h,s\}$ where $s \in [h+3,k-h]$, then \[A=[0,h-1] \cup [h+1,s-1]\cup [s+1,k+1].\] Note that $\dfrac{h(h-1)}{2} +1 \notin h^{\wedge}A$ and \begin{center}
					$h^{\wedge}A= \left\{\dfrac{h(h-1)}{2}\right\}   \bigcup \left[\dfrac{h(h-1)}{2}+2, hk-\dfrac{h(h-3)}{2}  \right]$.
				\end{center}
				Therefore, $|h^{\wedge}A|=hk-h^2+2h.$
				\item If $\{x,y\}=\{r,k-h+1\}$ where $r \in [h+1,k-h-2]$, then 
				\begin{align*}
					A &=[0,r-1] \cup [r+1,k-h] \cup [k-h+2,k+1]\\&
					= k+1 - ( [0,h-1] \cup [h+1,k-r]\cup [k-r+2,k+1])\\&
					= k+1 - ( [0,h-1] \cup [h+1,s-1]\cup [s+1,k+1])
				\end{align*}
				where $s=k+1-r \in [h+3,k-h]$. Since the cardinality of  $h^{\wedge}A$ is translation and dilation invariant, therefore, in both of the cases, we have 
				\[|h^{\wedge}A|=hk-h^2+2h.\]
			\end{enumerate}
			
			\item[(vii)] If $\{x,y\}=\{h,k-h+1\}$, then $A=[0,h-1] \cup [h+1,k-h] \cup [k-(h-2),k+1]$. Note that $\dfrac{h(h-1)}{2}+1$ and $hk-\dfrac{h(h-3)}{2}-1$ are not in $h^{\wedge}A$. Also, \begin{align*}
				h^{\wedge}A& =\left\{\dfrac{h(h-1)}{2}\right\} \bigcup \left[ \dfrac{h(h-1)}{2}+2, hk-\dfrac{h(h-3)}{2}-2 \right] \bigcup \left\{hk-\dfrac{h(h-3)}{2}\right\}
			\end{align*}
			Therefore, $|h^{\wedge}A|=hk-h^2+2h-1.$
			
			\item[(viii)] $\{x,y\}=\{h, s \}$ where $s \in [k-h+2,k-2]$, then 
			\begin{align*}
				A &=[0,h-1]\cup [h+1,s-1] \cup [s+1,k+1] \\&
				= (k+1) - ([0,k-s]\cup [k-s+2,k-h] \cup  [k-h+2,k+1])\\&
				= (k+1) - ([0,r-1]\cup [r+1,k-h]\cup [k-h+2,k+1]),
			\end{align*} 
			where $r=k+1-s \in [3,h-1] $. Since the cardinality of $h^{\wedge}A$ is translation and dilation invariant, we have \[|h^{\wedge}A|= |h^{\wedge}([0,r-1]\cup [r+1,k-h]\cup [k-h+2,k+1]),\] 
			where $r \in [3,h-1]$. Therefore,
			from the previous case, we have   \begin{align*}
				|h^{\wedge}A| &=hk-h^{2}+h+r\\ &=hk-h^2+h+k+1-s \\& 
				=(h+1)k-h(h-1)-s+1.\end{align*}
			
			\item [\upshape(ix)] If $x \in [h+1,k-(h+3)]$ and $y \in [h+4,k-h]$, then  \begin{center}
				$h^{\wedge}A=\bigg[\dfrac{h(h-1)}{2}, hk - \dfrac{h(h-3)}{2}\bigg].$
			\end{center} Therefore, we have
				$|h^{\wedge}A|=hk-h^2+2h+1.$

			\item[(x)] 
			If $x \in [h+1,k-h-1]$ and $y \in [k-h+2,k-2]$, then 
			\begin{align*}
				A&=[0,x-1] \cup [x+1,y-1] \cup [y+1,k+1]\\&
				=(k+1)-([0,k-y] \cup [k-y+2,k-x] \cup [k-x+2,k+1])\\&
				=(k+1)-([0,x'-1] \cup [x'+1,y'-1] \cup [y'+1,k+1]),
			\end{align*}
			where $x^{\prime} = k+1-y \in [3,h-1]$ and $y^{\prime} = k+1-x \in [h+2,k-h]$. Therefore, from \upshape(iii), we have 
			\begin{align*}
				|h^{\wedge}A|&= hk-h^2+h+x^{\prime}+1\\ & =  hk-h^2+h+k+1-y+1 \\&
				= (h+1)k -h(h-1)-y+2.
			\end{align*}

			\item [(xi)] If $\{x,y\}=\{k-h,s\}$ where $s \in [k-(h-3),k-2]$, then 
			\begin{align*}
				A &=[0,k-(h+1)]\cup [k-(h-1),s-1] \cup [s+1,k+1]
				\\ &
				= (k+1) - ([0,k-s]\cup [k-s+2,h] \cup [h+2,k+1])\\
				&= (k+1) - ([0,r-1]\cup [r+1,h] \cup [h+2,k+1]),
			\end{align*} 
			where $r = k+1-s \in [3,h-2] $. Since the cardinality of $h^{\wedge}A$ is translation and dilation invariant, we have \[|h^{\wedge}A|= |h^{\wedge}([0,r-1]\cup [r+1,h] \cup [h+2,k+1]),\] 
			where $r \in [3,h-2]$. Therefore, from \upshape(ii), we have 
			\begin{align*}
				|h^{\wedge}A|&=hk-h^2+h+r\\
				& = hk-h^{2}+h+k+1-s\\
				&=(h+1)k-h^{2}+h-s+1.
			\end{align*}
			
			\item [(xii)]  If $ x\in [k-h+1, k-5]$ and $y \in [k-h+4,k-2]$, then we have 
			\begin{align*}
				A &=[0,x-1]\cup [x+1,y-1] \cup [y+1,k+1] \\
				&=(k+1) - ([0,k-y] \cup [k-y+2, k-x] \cup [k-x+2,k+1])\\
				&=(k+1) - ([0,x^{\prime}-1] \cup [x^{\prime}+1,y^{\prime}-1] \cup [y^{\prime}+1,k+1]),
			\end{align*}
			where $x^{\prime} = k+1-y \in [3,h-3]$ and $y^{\prime} = k+1-x \in [6,h]$. Since the cardinality of $h^{\wedge}A$ is translation and dilation invariant, we have 
			\begin{center} $|h^{\wedge}A| = |h^{\wedge}([0,x'-1] \cup [x'+1,y'-1] \cup  [y'+1,k+1]),$ 
			\end{center}
			where $x' \in [3,h-3]$ and $y' \in [6,h]$. Therefore, from \upshape(i), we get that  $$|h^{\wedge}A|=hk-h^2+x^{\prime}+y^{\prime}=hk-h^2+2k+2-x-y.$$ 
		\end{enumerate}
	\end{proof}
	
	%
	%
		%
		%
		%
	
	\noindent In the following proposition, we compute the cardinality of $h^{\wedge}A$ when $A=[0,k+2] \setminus \{x,x+1,x+2\}, ~\text{where}~  x\in [1,k-1].$
	
	\begin{prop}\label{prop3.10}
		Let $ h \geq 3$ and $k \geq 3h+4 $ be positive integers. Let  \[A=[0,k+2] \setminus \{x,x+1,x+2\}, ~\text{where}~  x\in [1,k-1].\]
		\begin{enumerate}
			\item [\upshape(i)] If $x \in [1, h-1]$, then  $|h^{\wedge}A|= hk-h^2+3x+1$.
			
			\item [\upshape (ii)] If $x \in [k-h+1,k-1]$, then  $|h^{\wedge}A|=
			(h+3)k-h^2-3x+1$. 
			\item [\upshape(iii)]    If $x \in \{h,k-h\}$, then $|h^{\wedge}A|= hk-h^2+3h-2.$
			\item [\upshape(iv)]    If $x \in \{h+1,k-h-1\}$, then $|h^{\wedge}A|=
			\begin{cases}
				3k, & \text{ if } h=3;\\
				hk-h^2+3h+1, & \text{ if } h \geq 4.
			\end{cases}$
			\item [\upshape(v)]  If $x \in [h+2, k-h-2]$, then $|h^{\wedge}A|=hk-h^2+3h+1.$
			
		\end{enumerate}    
	\end{prop}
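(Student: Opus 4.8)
The plan is to cut the number of cases in half using the reflection $A\mapsto(k+2)-A$, and then, in each surviving case, to describe $h^{\wedge}A$ explicitly through a single ``count the top elements'' argument.

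First I would record the symmetry. Since $(k+2)-A=[0,k+2]\setminus\{k-x,\,k-x+1,\,k-x+2\}$ and, as noted in the excerpt, $|h^{\wedge}A|$ is invariant under translation and dilation, the substitution $x'=k-x$ turns case (ii) into case (i), the branch $x=k-h$ of (iii) into the branch $x=h$, and the branch $x=k-h-1$ of (iv) into the branch $x=h+1$, while (v) is mapped to itself. Hence it suffices to treat $x\in[1,h-1]$, $x=h$, $x=h+1$, and $x\in[h+2,k-h-2]$, where $A=[0,x-1]\cup[x+3,k+2]$.

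For each such $A$ I would classify the $h$-element subsets by the number $j\in[0,h]$ of elements drawn from the top interval $[x+3,k+2]$. Because both $[0,x-1]$ and $[x+3,k+2]$ are intervals, for every admissible $j$ the sums using exactly $j$ top elements fill a full interval $I_j=[m_j,M_j]$, with $m_j$ built from the $h-j$ smallest bottom elements together with the $j$ smallest top elements, and $M_j$ from the largest ones; thus $h^{\wedge}A=\bigcup_j I_j$. Two quantities are uniform: the maximum $M_h=\sum_{i=k-h+3}^{k+2} i = hk-\tfrac{h^2-5h}{2}$ (the removed block lies strictly below the top $h$ elements once $k\ge 3h+4$), and the overall minimum, which equals $\tfrac{(h+2)(h+3)}{2}-(3x+3)$ when $x\in[1,h-1]$ (here the bottom interval has fewer than $h$ elements, so the least admissible $j$ is $h-x$) and equals $\tfrac{h(h-1)}{2}$ once $x\ge h$.

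The heart of the matter is the overlap test $m_{j+1}\le M_j+1$, which decides whether neighbouring intervals merge. The hypotheses $x\le k-h-2$ and $k\ge 3h+4$ make the top interval long enough that every high-$j$ overlap holds, so a gap can only open between $I_0$ and $I_1$. When the bottom interval is long, $x\ge h+2$, a short computation gives $M_0-m_1\ge 2h-6\ge 0$, so $I_0$ and $I_1$ already meet and $h^{\wedge}A$ is the single interval $\bigl[\tfrac{h(h-1)}{2},\,hk-\tfrac{h^2-5h}{2}\bigr]$, yielding $|h^{\wedge}A|=hk-h^2+3h+1$ in case (v); the full-interval conclusion holds likewise in case (i), giving $hk-h^2+3x+1$. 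The delicate boundary cases are $x=h$ and $x=h+1$, where the bottom interval has exactly $h$ and $h+1$ elements: then $I_0$ degenerates to the point $\tfrac{h(h-1)}{2}$ (resp.\ to the block $\bigl[\tfrac{h(h-1)}{2},\tfrac{h(h+1)}{2}\bigr]$), while $m_1=\tfrac{h(h-1)}{2}+4$ (resp.\ $m_1=\tfrac{h(h-1)}{2}+5$). Comparing $m_1$ with $M_0+1$ shows that three values above the minimum are unreachable when $x=h$, giving $hk-h^2+3h-2$; and when $x=h+1$ exactly one value is missing, precisely when $h=3$, giving $3k$, whereas for $h\ge 4$ the bottom block already reaches $m_1$ and there is no gap, giving $hk-h^2+3h+1$. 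This single threshold comparison is the main obstacle and is exactly what produces the $h=3$ versus $h\ge 4$ split in part (iv); everything else reduces to evaluating $m_j$ and $M_j$ on the explicit intervals.
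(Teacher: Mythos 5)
Your proposal is correct and takes essentially the same route as the paper: the paper also uses the reflection $A\mapsto (k+2)-A$ to reduce case (ii), the branch $x=k-h$ of (iii), and the branch $x=k-h-1$ of (iv) to their low counterparts, and then identifies $h^{\wedge}A$ explicitly as a full interval, or as an interval missing exactly three values (when $x=h$) or exactly one value (when $x=h+1$ and $h=3$) just above the minimum $\frac{h(h-1)}{2}$. Your decomposition into the intervals $I_j$ with the overlap test $m_{j+1}\leq M_j+1$ is a systematic justification of what the paper dispatches with ``it is easy to see'', and your endpoint computations (e.g. $m_1=\frac{h(h-1)}{2}+4$ at $x=h$, and $m_1=\frac{h(h-1)}{2}+5$ versus $M_0=\frac{h(h+1)}{2}$ at $x=h+1$, which produces the $h=3$ split) agree exactly with the sets the paper writes down.
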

	
	\begin{proof}
		\begin{enumerate}
			\item [\upshape(i)]
			If $x \in [1, h-1]$, then $A=[0,x-1]\cup [x+3,k+2]$. It follows that
		 \begin{center}
				$h^{\wedge}A =
				\bigg[\dfrac{(h+2)(h+3)}{2}-(3x+3), hk - \dfrac{h(h-5)}{2}\bigg].$
			\end{center}
			Therefore,  
				$|h^{\wedge}A|=	hk-h^2+3x+1 .$

			\item[\upshape(ii)]  If $ x \in [k-h+1,k-1]$, then
			\begin{align*}
				A& =[0,x-1] \cup [x+3,k+2]\\ 
				&= (k+2)- ([0,k-x-1] \cup [k-x+3,k+2])\\ 
				&= (k+2)- ([0,y-1] \cup [y+3,k+2]), 
			\end{align*} where $y = k-x \in [1,h-1].$ Since the cardinality of $h^{\wedge}A$ is translation and dilation invariant, we have 
			\begin{center} $|h^{\wedge}A| = |h^{\wedge}([0,y-1] \cup [y+3,k+2])|, ~ \text{where}~ y \in [1,h-1].$
			\end{center} 
			Therefore, from previous case, we have  
			\begin{align*}
				|h^{\wedge}A| &=  hk-h^2+3y+1\\
				&= hk-h^2+3(k-x)+1\\
				& = (h+3)k-h^{2}-3x+1.
			\end{align*}

			\item 	[\upshape(iii)]  \begin{enumerate}
				\item 
				If $x=h$,  then $A=[0,h-1] \cup [h+3,k+2]$. It is easy to observe that
				$\dfrac{h(h-1)}{2}+1$, $\dfrac{h(h-1)}{2}+2$, and  $\dfrac{h(h-1)}{2}+3$ do not belongs to $h^{\wedge}A.$
				Also,
				\begin{center}
					$h^{\wedge}A = \left\{\dfrac{h(h-1)}{2}\right\} \bigcup\bigg[ \dfrac{h(h- 1)}{2}+4, hk-\dfrac{h(h-5)}{2}\bigg]. $
				\end{center}
				\item 	If $x=k-h$, then 
				\begin{align*} A&=[0,k-h-1] \cup [k-(h-3),k+2] \\
					& = (k+2)-([0,h-1] \cup [h+3,k+2]).
				\end{align*} Since the cardinality of $h^{\wedge}A$ is translation and dilation invariant, we have 
				\begin{center}
					$|h^{\wedge}A|= |h^{\wedge}([0,h-1] \cup [h+3,k+2])|.$ 
				\end{center} Therefore, in both the cases, we get that
					$|h^{\wedge}A| = hk-h^2+3h-2.$
			\end{enumerate}
			
			\item 	[\upshape(iv)] 
			\begin{enumerate}
				\item If $x=h+1$,  then $A=[0,h] \cup [h+4,k+2]$. Note that, for $h=3$, 
				\begin{align*}\dfrac{h(h- 1)}{2}+4 &=7 \notin 3^{\wedge}A.\end{align*}
				Therefore, $3^{\wedge}A=[3,6] \cup [8,3k+3].$  Let $h \geq 4$. It is evident that  
				\begin{align*}
					h^{\wedge}A &=\left[ \dfrac{h(h- 1)}{2}, hk-\dfrac{h(h-5)}{2}\right].
				\end{align*}
				Therefore, $|h^{\wedge}A|=
				\begin{cases}
					3k, & \text{ if } h=3;\\
					hk-h^2+3h+1, & \text{ if } h \geq 4.
				\end{cases}$
				\item 	If $x=k-h-1$, then we have \begin{align*}
					A &=[0,k-h-2]\cup [k-h+2, k+2] \\&
					= (k+2)-([0,h]\cup [h+4,k+2]).
				\end{align*} Since cardinality of $h^{\wedge}A$ is translation and dilation invariant, we have 
				\begin{align*}|h^{\wedge}A| &= |h^{\wedge}([0,h] \cup [h+4,k+2])|.
				\end{align*} 
				Therefore, from the previous case, we get that  \[|h^{\wedge}A|=
				\begin{cases}
					3k, & \text{ if } h=3;\\
					hk-h^2+3h+1, & \text{ if } h \geq 4.
				\end{cases}\] 
			\end{enumerate}
			
			\item 	[\upshape(v)]   If $x \in [h+2,k-h-2]$, then $A=[0,x-1]\cup [x+3,k+2]$. It is easy to get that \begin{align*}
				h^{\wedge}A &=\left[ \dfrac{h(h-1)}{2}, hk-\dfrac{h(h-5)}{2}\right].
			\end{align*} Therefore, $ |h^{\wedge}A| = hk-h^2+3h+1.$
		\end{enumerate}
	\end{proof}
	
\noindent	In the following proposition, we compute the cardinality of $3^{\wedge}A$ when $A=[0,k+2] \setminus \{x,y,z\},$ where  $2 \leq x<y<z\leq k$ with $y=x+1$ and $z-x \geq 3$.
	
	\begin{prop}\label{prop3.10-1}
		Let $k \geq 13$ be a positive integer. Let $$A=[0,k+2] \setminus \{x,x+1,z\} ~ \text{where} ~ x \in [2,k-3]~ \text{and} ~ z \in [5,k]~ \text{with} ~ z-x \geq 3.$$
		\begin{enumerate}
			\item [\upshape (i)] If $x=2$ and $z \in \{5,6,7,k-1,k\}$, then $|3^{\wedge}A|= 3k-2$.
			
			\item [\upshape (ii)] If $x=2$ and $z \in [8,k-2]$, then $\left|3^{\wedge}A\right| = 3k-1.$
			
			\item[\upshape (iii)] If $x=3$ and $z \in \{k-1,k\}$, then $|3^{\wedge}A|=3k-2$.
			
			\item[\upshape (iv)] If $x=3$ and $z \in [6,k-2]$, then $\left|3^{\wedge}A\right| = 3k.$
			\item[\upshape (v)] If $x \in [4,k-4]$ and $z = k-1$, then $\left|3^{\wedge}A\right| = 3k.$
			\item[\upshape (vi)] If $x \in [4,k-5]$ and $z = k$, then 
			$|3^{\wedge}A|=3k$. 
			\item[\upshape (vii)] If $x \in \{k-4,k-3\}$ and $z = k$, then
			$|3^{\wedge}A|=
			\begin{cases}
				3k-1, & \text{ if } x=k-4;\\
				3k-2, & \text{ if } x=k-3.
			\end{cases}$
			\item[\upshape (viii)] If $x \in [4,k-5]$ and $z \in [7,k-2]$, then $\left|3^{\wedge}A\right| = 3k+1.$
		\end{enumerate}
	\end{prop}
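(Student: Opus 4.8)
The plan is to compute $3^{\wedge}A$ explicitly in each of the eight regimes by writing $A$ as a disjoint union of intervals and then exploiting the fact that the restricted three-fold sumset of a nearly-full interval is itself a full interval apart from a bounded number of exceptional values at each extreme. Concretely, deleting $\{x,x+1,z\}$ from $[0,k+2]$ leaves
\[
A=[0,x-1]\cup[x+2,z-1]\cup[z+1,k+2],
\]
with the obvious modifications when one of the three intervals is empty or degenerates to a single point. Since $3^{\wedge}A$ is the set of sums $a+b+c$ of three distinct elements, its smallest element $m$ is the sum of the three smallest members of $A$ and its largest element $M$ is the sum of the three largest; the first step is simply to record $m$ and $M$ in each case.

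The decisive step is the claim that $3^{\wedge}A$ coincides with the full interval $[m,M]$ except inside two bounded windows, one just above $m$ and one just below $M$, and that for $k\geq 13$ these windows are disjoint. Because only three points are removed from $[0,k+2]$, the set $A$ is dense: for any target $t$ away from both extremes one can select a small, a middle, and a large element summing to $t$, so no gaps occur in the interior. Gaps near the bottom are caused exactly by the deletion of the small integers $2,3,4$, hence appear only when $x\in\{2,3\}$ (and, when $x=2$, are further perturbed if $z$ is one of the first available values $5,6,7$); gaps near the top are caused only by the deletion of $k,k-1$, hence only when $z\in\{k-1,k\}$. When $x\geq 4$ and $z\leq k-2$ both windows are clean, $m=0+1+2=3$, $M=k+(k+1)+(k+2)=3k+3$, and $3^{\wedge}A=[3,3k+3]$ has size $3k+1$; this is the generic case (viii) and serves as the baseline from which every other count is obtained by adjusting a few boundary values.

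To organise the endpoint analysis efficiently I would use the reflection $A\mapsto(k+2)-A$, under which $|3^{\wedge}A|$ is invariant and the three largest elements of $A$ become the three smallest of $(k+2)-A$. This turns every top-window correction (the effect of $z\in\{k-1,k\}$ on the largest few sums) into a bottom-window computation (the deletion of the single small element $k+2-z\in\{2,3\}$ in the reflected set), so that only one end need be analysed directly. Carrying this out, each case reduces to listing the finitely many attainable and unattainable values in the relevant window -- for instance, for $x=3$ one checks that $m=3$ is attainable as $0+1+2$ while $4,5$ are not, after which the sumset is continuous, and for $x=2$ with $z\in\{5,6,7\}$ the proximity of $z$ removes one further value such as $6$ -- and the cardinality is then read off as $(M'-m'+1)$ plus the number of isolated points minus the number of interior gaps.

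The main obstacle is the exhaustive and error-free bookkeeping at the two ends, together with the adjacency phenomena: when $z$ sits close to the deleted pair $\{x,x+1\}$ or close to an extreme of $[0,k+2]$, the local structure of $A$ creates an extra short gap, and it is precisely these adjacencies that force the subdivisions of the parameter range (the split of case (i), $z\in\{5,6,7,k-1,k\}$, versus case (ii), $z\in[8,k-2]$; the separate treatment of $z=k-1$ and $z=k$; and the boundary values $x\in\{k-4,k-3\}$ in case (vii)). Verifying that no further exceptional value hides in any window, and that the hypothesis $k\geq 13$ genuinely keeps the bottom and top windows separated in every case, is the part demanding the most care; the interior ``no-gap'' assertion, by contrast, is routine once the density of $A$ is invoked.
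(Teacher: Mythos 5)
Your proposal is correct and follows essentially the same route as the paper: write $A$ explicitly as a union of intervals, observe that $3^{\wedge}A$ is a full interval apart from finitely many exceptional values in bounded windows at the two extremes, and determine those exceptions case by case. The only organizational difference is your use of the reflection $A \mapsto (k+2)-A$ to fold the top-end analysis into the bottom-end one; the paper instead computes every case of this proposition directly and reserves that reflection for deducing the companion proposition (the $\{x,y,y+1\}$ configuration) from this one.
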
		
	
	\begin{proof}
		\begin{enumerate}
			\item [(i)]
			\begin{enumerate}

				\item[\upshape(a)] If $x=2$ and $z \in \{5,6,7\} $, then 
				$A=\{0,1\}\cup [4,z-1] \cup [z+1,k+2]$. So, $3^{\wedge}A=[5,z] \cup [z+2,3k+3].$
				\item[\upshape(b)] If $x=2$ and $z =k-1$, then $A=\{0,1\}\cup [4,k-2] \cup \{k,k+1,k+2\}$. So, $3^{\wedge}A=[5,3k+1]\cup \{3k+3\}.$
				\item[\upshape(c)] If $x=2$ and $z =k$, then $A=\{0,1\}\cup [4,k-1] \cup \{k+1,k+2\}$. So, $3^{\wedge}A=[5,3k+2].$
				
				\noindent	Therefore, in each of the cases, we have $|3^{\wedge}A|=3k-2.$ 
			\end{enumerate}
			\item[\upshape(ii)] If $x=2$ and $z \in [8,k-2]$, then  
			$A=\{0,1\}\cup [4,z-1] \cup [z+1,k+2]$. It follows that  $3^{\wedge}A=[5,3k+3]$ and  $|3^{\wedge}A|=3k-1.$
			\item [(iii)]
			\begin{enumerate}

				\item[\upshape(a)] If $x=3$ and $z =k-1$, then $A=\{0,1,2\} \cup [5,k-2]\cup \{k,k+1,k+2\}$. So, $3^{\wedge}A=\{3\} \cup [6,3k+1] \cup \{3k+3\}.$
				\item[\upshape(b)] If $x=3$ and $z =k$, then $A=\{0,1,2\} \cup [5,k-1]\cup \{k+1,k+2\}$. So, $3^{\wedge}A=\{3\} \cup [6,3k+2].$
			\end{enumerate}
			\noindent	Therefore, in both of the cases, we have $|3^{\wedge}A|=3k-2$.
			
			\item [\upshape (iv)] If $x=3$ and $z \in [6,k-2]$, then $A= \{0,1,2\} \cup [4,z-1] \cup [z+1,k+2]$. So, $3^{\wedge}A=  \{3\} \cup [5,3k+3].$ Therefore $|3^{\wedge}A|=3k.$

			\item[\upshape(v)] If $x \in [4,k-4]$ and $z=k-1$, then $A=[0,x-1] \cup [x+2,k-2] \cup \{k,k+1,k+2\}$. It follows that $3^{\wedge}A=[3,3k+1] \cup \{3k+3\}$ and  $|3^{\wedge}A|=3k.$

			\item [\upshape(vi)] If $x \in [4,k-5]$ and $z=k$, then $A=[0,x-1] \cup [x+2,k-1] \cup \{k+1,k+2\}$. It follows that $3^{\wedge}A=[3,3k+2]$ and  $|3^{\wedge}A|=3k$.
			\item[\upshape(vii)]
			\begin{enumerate}
				\item [\upshape (a) ]If $x = k-4 $ and $z=k$, then $A=[0,k-5] \cup \{k-2,k-1,k+1,k+2\}$. It follows that  $3^{\wedge}A=[3,3k-1] \cup \{3k+1,3k+2\}$ and 	$|3^{\wedge}A|=	3k-1$. 
				
				\item [\upshape (b) ]If $x = k-3 $ and $z=k$, then $A=[0,k-4] \cup \{k-1,k+1,k+2\}$. It follows that  $3^{\wedge}A=[3,3k-1] \cup \{3k+2\}$ and 	$|3^{\wedge}A|=	3k-2$.
			\end{enumerate}
			\item[\upshape(viii)] If  $x \in [4,k-5]$ and $z \in [7,k-2]$, then $A=[0,x-1] \cup [x+2,z-1] \cup [z+1,k+2]$. It follows that $3^{\wedge}A=[3,3k+3]$ and  $|3^{\wedge}A|=3k+1$.
		\end{enumerate}
		This completes the proof of the proposition.
	\end{proof}	
	In the following proposition, we compute the cardinality of $3^{\wedge}A$ when $A=[0,k+2] \setminus \{x,y,z\},$ where  $2 \leq x<y<z\leq k$ with $z=y+1$ and $y-x \geq 2$.	
	Since $|h^{\wedge}A|$ is translation and dilation invariant, the proof of  Proposition \ref{prop3.10-2} follows from the Proposition \ref{prop3.10-1}
	\begin{prop}\label{prop3.10-2}
		Let $k \geq 13$ be a positive integer. Let $$A=[0,k+2] \setminus \{x,y,y+1\}, ~ \text{where} ~ x \in [2,k-3]~ \text{and} ~ y \in [4,k-1]~ \text{with} ~ y-x \geq 2.$$
		\begin{enumerate}
			\item [\upshape (i)]  If $x\in \{2,3,k-5,k-4,k-3\}$ and $y=k-1$, then 	$\left|3^{\wedge}A\right|= 3k-2.$
			\item [\upshape (ii)]  If $x\in [4,k-6]$ and $y=k-1$, then 	$\left|3^{\wedge}A\right| =3k-1$.
			
			\item[\upshape (iii)] If $x \in \{2,3\}$ and $y=k-2$, then $\left|3^{\wedge}A\right| = 3k-2$.

			\item[\upshape (iv)] If $x \in [4,k-4]$ and $y=k-2$, then $\left|3^{\wedge}A\right| = 
			3k-1$.
			\item [\upshape (v)] If $x=3$ and $y \in [5,k-3]$, then $|3^{\wedge}A|= 3k.$
			\item [\upshape (vi)] If $x=2$ and $y \in [6,k-3]$, then $|3^{\wedge}A|=3k$.

			\item [\upshape (vii)] If $x=2$ and $y \in \{4,5\}$, then 	$|3^{\wedge}A|=
			\begin{cases}
				3k-1, & \text{ if } y=5;\\
				3k-2, & \text{ if } y=4.
			\end{cases}$
			\item[\upshape (viii)] If $x \in [4,k-5]$ and $y \in [6,k-3]$, then $\left|3^{\wedge}A\right| = 3k+1$.
		\end{enumerate}
	\end{prop}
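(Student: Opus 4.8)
The plan is to exploit the translation-and-dilation invariance of $|3^{\wedge}A|$ recorded in the introduction and thereby reduce the present proposition to Proposition~\ref{prop3.10-1}, as the preceding remark indicates. Concretely, I would apply the reflection $t \mapsto (k+2)-t$, that is, take $(-1)\ast A + (k+2) = (k+2)-A$, under which $|3^{\wedge}A| = |3^{\wedge}((k+2)-A)|$. Writing $A=[0,k+2]\setminus\{x,y,y+1\}$ and reflecting, the ambient interval $[0,k+2]$ is fixed setwise while the three deleted points become
\[
(k+2)-A = [0,k+2]\setminus\{k+2-x, k+1-y, k+2-y\} = [0,k+2]\setminus\{x',x'+1,z'\},
\]
where $x':=k+1-y$ and $z':=k+2-x$. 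Thus the consecutive pair now sits at the bottom and the isolated element at the top, which is exactly the configuration treated in Proposition~\ref{prop3.10-1}.

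Next I would verify that the reflection sends the hypotheses of this proposition onto those of Proposition~\ref{prop3.10-1}. From $y\in[4,k-1]$ we obtain $x'=k+1-y\in[2,k-3]$; from $x\in[2,k-3]$ we obtain $z'=k+2-x\in[5,k]$; and the gap condition transforms as $z'-x'=(k+2-x)-(k+1-y)=y-x+1\geq 3$ since $y-x\geq 2$. Hence $(x',z')$ is an admissible pair for Proposition~\ref{prop3.10-1}, and by invariance the value of $|3^{\wedge}A|$ is precisely the one recorded there for the pair $(x',z')$.

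It then remains to run through the eight cases, substituting $x'=k+1-y$ and $z'=k+2-x$ and reading off the corresponding cardinality. For example, case~(i) here has $y=k-1$, so $x'=2$, and its five values of $x$ produce $z'\in\{k,k-1,7,6,5\}$, each landing in Proposition~\ref{prop3.10-1}(i) with value $3k-2$; the generic case~(viii) has $x'\in[4,k-5]$ and $z'\in[7,k-2]$, matching Proposition~\ref{prop3.10-1}(viii) with value $3k+1$; and case~(ii) sends $z'$ into $[8,k-2]$, matching Proposition~\ref{prop3.10-1}(ii) with value $3k-1$. The only genuine work is this bookkeeping, and the point requiring the most care is the boundary and exceptional configurations---those in which $x'$ or $z'$ sits near an endpoint, so that one or two extremal triple-sums become unreachable and the count drops below the generic $3k+1$. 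The cases mapping to Proposition~\ref{prop3.10-1}(iv) and~(vii) are the most delicate here, since there a single exceptional value of $y$ (respectively an endpoint of the $x$-interval) must be checked to fall in the intended sub-case and to return the stated smaller cardinality. Once the interval arithmetic of the second paragraph is in hand, each of the eight cases closes by direct reference to the matching item of Proposition~\ref{prop3.10-1}.
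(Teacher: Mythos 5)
Your reduction is precisely the paper's own proof (the paper disposes of this proposition in one line, citing translation--dilation invariance and Proposition \ref{prop3.10-1}), and your hypothesis bookkeeping is correct: under $t\mapsto (k+2)-t$ the set $[0,k+2]\setminus\{x,y,y+1\}$ becomes $[0,k+2]\setminus\{x',x'+1,z'\}$ with $x'=k+1-y\in[2,k-3]$, $z'=k+2-x\in[5,k]$ and $z'-x'=y-x+1\geq 3$. Moreover, seven of the eight cases do close exactly as you say: cases (i), (ii), (iii), (v), (vi), (vii), (viii) here map onto items (i), (ii), (iii), (v), (vi), (vii), (viii) of Proposition \ref{prop3.10-1}, with the values agreeing each time.

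The genuine gap is case (iv), the very case you flag as delicate but never carry out. The reflection sends $x\in[4,k-4]$, $y=k-2$ to $x'=3$, $z'=k+2-x\in[6,k-2]$, which is exactly the hypothesis of Proposition \ref{prop3.10-1}(iv) --- but that item asserts $|3^{\wedge}A|=3k$, whereas the statement you must prove asserts $3k-1$. So ``direct reference to the matching item'' does not return the stated smaller cardinality; it returns a contradiction. The fault is in the paper, not in your arithmetic: Proposition \ref{prop3.10-1}(iv) is wrong as printed, because its proof takes $A=\{0,1,2\}\cup[4,z-1]\cup[z+1,k+2]$, forgetting that $x+1=4$ is also deleted. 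With the correct set $A=\{0,1,2\}\cup[5,z-1]\cup[z+1,k+2]$, the three smallest elements are $0,1,2$ and the next one is $5$, so both $4$ and $5$ are missing from $3^{\wedge}A$; one gets $3^{\wedge}A=\{3\}\cup[6,3k+3]$, hence $|3^{\wedge}A|=3k-1$ (for instance, $k=13$ with deleted set $\{3,4,8\}$ gives $3^{\wedge}A=\{3\}\cup[6,42]$, of size $38=3k-1$). Thus the value $3k-1$ claimed in the present proposition is correct, but it cannot be obtained by citing Proposition \ref{prop3.10-1}(iv) as it stands: to close case (iv) you must either redo this computation for the reflected configuration or first correct that item. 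The same objection applies to the paper's own one-line proof, which silently inherits the inconsistency between the two propositions.
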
				
	
\noindent	In the Proposition \ref{3 elmnt gen case} and Proposition \ref{prop3.12}, we compute $|3^{\wedge}A|$ when $A=[0,k+2] \setminus \{x,y,z\}, ~ \text{where} ~ 2 \leq x<y<z\leq k  ~ \text{with} ~ y-x \geq 2  ~ \text{and} ~ z-y \geq 2.$
	\begin{prop}\label{3 elmnt gen case}
		Let $k \geq 13$ be a positive integer. Let $$A=[0,k+2] \setminus \{x,y,z\}, ~ \text{where} ~ 2 \leq x<y<z\leq k  ~ \text{with} ~ y-x \geq 2  ~ \text{and} ~ z-y \geq 2.$$
		\begin{enumerate}
			\item [\upshape (i)] If $\{x,y,z\}$ is one the sets  $\{2,4,6\}$, $\{2,4,k-1\}$, $\{2,4,k\}$, $\{2,5,k-1\}$, and $\{2,5,k\}$, then $|3^{\wedge}A|=3k-2.$
			
			\item [\upshape (ii)] If $\{x,y,z\}$ is one the sets $\{2,4,t\}$ where $t \in [7,k-2]$, $\{2,5,t\}$ where $t \in [7,k-2] $, $\{2,s,k-1\}$ where $s \in [6,k-3]$, $\{2,s,k\}$ where $s \in [6,k-4]$, and $\{3,s,k-1\}$ where $s \in [5,k-3]$, then $|3^{\wedge}A|=3k-1.$
			
			\item [\upshape (iii)] If $\{x,y,z\}$ is one the sets $\{r,s,k\}$ where $5 \leq r+1<s\leq k-4$, and $\{3,s,t\}$ where $6 \leq s+1< t \leq k-2$, then $|3^{\wedge}A|=3k.$
			
			\item [\upshape (iv)] If $\{x,y,z\}= \{r,s,t\}$ where $4\leq r<s<t\leq k-2 ~ \text{and} ~ s-r \geq 2, ~ t-s \geq 2$, then $|3^{\wedge}A|=3k+1.$
		\end{enumerate}
	\end{prop}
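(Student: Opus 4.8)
The plan is to realise $3^{\wedge}A$ as the full interval $3^{\wedge}[0,k+2]=[3,3k+3]$ with only a short, explicitly determined list of values removed near its two endpoints. Since $A=[0,k+2]\setminus\{x,y,z\}$ we have $3^{\wedge}A\subseteq[3,3k+3]$, and an integer $n$ lies outside $3^{\wedge}A$ exactly when every representation of $n$ as a sum of three distinct elements of $[0,k+2]$ uses one of $x,y,z$. The smallest and largest attainable sums are the sums of the three smallest and three largest surviving elements, so I would first isolate the two short boundary windows where losses can occur and then show that the interior is gap-free.

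The key step is an interior-covering claim: for $k\geq 13$ one has $[9,3k-3]\subseteq 3^{\wedge}A$. I would prove this by anchoring at $0\in A$ (available since $x\geq 2$): to realise a target $n$ it suffices to write $n=0+b+c$ with $\{b,c\}$ a pair of distinct elements of $A\setminus\{0\}=[1,k+2]\setminus\{x,y,z\}$. Writing $r_2(n)$ for the number of pairs $\{b,c\}\subseteq[1,k+2]$ with $b\neq c$ and $b+c=n$, a direct count gives $r_2(n)\geq 4$ precisely for $n\in[9,2k-3]$. The decisive observation is that in this \emph{restricted} two-fold problem each deleted element $v$ belongs to at most one such pair, namely $\{v,n-v\}$; hence deleting $x,y,z$ destroys at most three of the $r_2(n)$ representations, and whenever $r_2(n)\geq 4$ at least one survives, giving $n\in 3^{\wedge}A$. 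This yields $[9,2k-3]\subseteq 3^{\wedge}A$. Applying the same argument to $(k+2)-A$, which is again of the admissible form, and using $3^{\wedge}A=(3k+6)-3^{\wedge}((k+2)-A)$, produces $[k+9,3k-3]\subseteq 3^{\wedge}A$; since $k+9\leq 2k-3$ for $k\geq 12$, the two intervals merge into $[9,3k-3]$.

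It then remains to decide membership in the two windows $[3,8]$ and $[3k-2,3k+3]$, each of width six and disjoint from the covered interior. The reflection $A\mapsto(k+2)-A$ sends the high window to the mirror of the low window and maps each listed configuration to another admissible one, so only the low window needs direct work. There a value $v\in[3,8]$ is absent iff each of its few representations as a distinct triple in $[0,k+2]$ meets $\{x,y,z\}$, and only deleted elements in $\{2,\dots,6\}$ can matter, so this is a bounded enumeration. For instance $3$ is absent iff $x=2$, the value $4$ iff $x=3$, the value $5$ iff $x=2$ and $4\in\{y,z\}$, the value $6$ iff $x=2$ and $5\in\{y,z\}$, while $7$ disappears only for the single triple $\{2,4,6\}$, and $8$ is always present. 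Running the listed families through these conditions records the low-window losses, and reflection supplies the high-window losses.

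Finally I would tally, for each family, the number of low-window losses together with the mirrored number of high-window losses: both ends clean gives $|3^{\wedge}A|=3k+1$ (case (iv)); exactly one loss, at either end, gives $3k$ (case (iii)); two losses, split one-and-one or both at the low end, give $3k-1$ (case (ii)); and three losses give $3k-2$ (case (i)). I expect the main obstacle to be the interior-covering claim of the second paragraph, whose clean proof hinges on the at-most-one-pair property of the restricted two-fold sumset and on $k\geq 13$ forcing the two anchored half-intervals to overlap; by contrast the boundary analysis, though spread over several near-extreme sub-families, is a finite case check, and the separation $3k-3-9$ guaranteed by $k\geq 13$ rules out any interference between the low and high windows.
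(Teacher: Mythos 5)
Your proposal is correct, and it takes a genuinely different route from the paper. The paper's own proof is a direct case-by-case computation: for each listed family it writes $A$ explicitly as a union of intervals and simply exhibits $3^{\wedge}A$ (for instance, for $\{x,y,z\}=\{2,4,6\}$ it states $3^{\wedge}A=\{4,6\}\cup[8,3k+3]$), reading off the cardinality; no unifying lemma is proved and the verification of each displayed sumset is left implicit. You instead prove one structural lemma that handles all families at once: anchoring at $0\in A$, the number of pairs $\{b,c\}\subseteq[1,k+2]$ with $b\neq c$ and $b+c=n$ is at least $4$ precisely for $n\in[9,2k-3]$, and each deleted element can destroy at most one such pair, so $[9,2k-3]\subseteq 3^{\wedge}A$; reflecting through $k+2$ (which preserves the admissible form, since the gap conditions are symmetric) gives $[k+9,3k-3]\subseteq 3^{\wedge}A$, and the two intervals merge because $k\geq 13$. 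This reduces the whole proposition to the two windows $[3,8]$ and $[3k-2,3k+3]$, and your loss conditions there ($3$ missing iff $x=2$; $4$ missing iff $x=3$; $5$ missing iff $(x,y)=(2,4)$; $6$ missing iff $(x,y)=(2,5)$; $7$ missing iff $\{x,y,z\}=\{2,4,6\}$; $8$ never missing; mirrored statements at the top) are correct under the hypotheses $y-x\geq 2$, $z-y\geq 2$. Since $\left|3^{\wedge}A\right|=(3k+1)$ minus the total number of losses, your tally reproduces (i)--(iv); I checked every listed family against the paper's displayed sumsets and the counts agree. What your approach buys: a single pigeonhole-style lemma replaces a dozen ad hoc sumset computations, it pins down the exact location (not just the number) of the gaps for every admissible $\{x,y,z\}$, and it covers for free the reflected families treated in the paper's companion proposition. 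What the paper's approach buys: explicit closed forms for $3^{\wedge}A$ in each case, with no auxiliary counting lemma. One minor imprecision you should repair in a final write-up: for the value $8$ the representation $0+1+7$ involves the element $7$, so deletions outside $\{2,\dots,6\}$ can in principle matter; your conclusion that $8$ always survives is nevertheless correct, because under the gap conditions at least one of the representations $0+2+6$, $0+3+5$, $1+2+5$, $1+3+4$ avoids $\{x,y,z\}$.
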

	\begin{proof}
		\begin{enumerate}
			\item [\upshape(i)] 
			\begin{enumerate}
				\item If $\{x,y,z\}=\{2,4,6\}$, then $A=\{0,1,3,5\}\cup [7,k+2]$ and $3^{\wedge}A=\{4,6\} \cup [8,3k+3].$
				\item If $\{x,y,z\}=\{2,4,k-1\}$, then $A=\{0,1,3\}\cup [5,k-2] \cup [k,k+2]$ and $3^{\wedge}A=\{4\} \cup [6,3k+1]\cup \{3k+3\}.$
				\item If $\{x,y,z\}=\{2,4,k\}$, then $A=\{0,1,3\}\cup [5,k-1] \cup \{k+1,k+2\}$ and $3^{\wedge}A=\{4\} \cup [6,3k+2].$
				\item If $\{x,y,z\}=\{2,5,k-1\}$, then $A=\{0,1,3,4\}\cup [6,k-2] \cup [k,k+2]$ and $3^{\wedge}A=\{4,5\} \cup [7,3k+1]\cup \{3k+3\}.$
				\item If $\{x,y,z\}=\{2,5,k\}$, then $A=\{0,1,3,4\}\cup [6,k-1] \cup \{k+1,k+2\}$ and $3^{\wedge}A=\{4,5\} \cup [7,3k+2].$
			\end{enumerate}
			Therefore, in each of the cases, we have $|3^{\wedge}A|=3k-2.$

			\item[\upshape(ii)]
			\begin{enumerate}
				\item If $\{x,y,z\}=\{2,4,t\}$, where $ t \in [7,k-2]$, then 
				$A=\{0,1,3\}\cup [5,t-1] \cup [t+1,k+2]$ and $3^{\wedge}A=\{4\} \cup [6,3k+3].$
				\item If $\{x,y,z\}=\{2,5,t\}$,  $ t \in [7,k-2]$, then 
				$A=\{0,1,3,4\}\cup [6,t-1] \cup [t+1,k+2]$ and $3^{\wedge}A=\{4,5\} \cup [7,3k+3].$
				\item If $\{x,y,z\}= \{2,s,k-1\}$, where $ s \in [6,k-3]$, then
				$A=\{0,1\}\cup [3,s-1] \cup [s+1,k-2]\cup [k,k+2]$
				and $3^{\wedge}A=[4,3k+1]\cup \{3k+3\}.$
				\item If $\{x,y,z\}=\{2,s,k\}$, where $ s \in [6,k-4]$, then 
				$A=\{0,1\}\cup [3,s-1] \cup [s+1,k-1]\cup \{k+1,k+2\}$ and $3^{\wedge}A=[4,3k+2].$
				\item If $\{x,y,z\}= \{3,s,k-1\}$, where $ s \in [5,k-3]$, then 
				$A=\{0,1,2\}\cup [4,s-1] \cup [s+1,k-2]\cup [k,k+2]$
				and $3^{\wedge}A=\{3\} \cup [5,3k+1]\cup \{3k+3\}.$
			\end{enumerate}
			Therefore, in each of the cases, we have $|3^{\wedge}A|=3k-1.$
			\item[\upshape (iii)] 
			\begin{enumerate}
				\item If $\{x,y,z\}=\{r,s,k\}$, where $5 \leq r+1<s \leq k-4$, then
				$A=[0,r-1] \cup [r+1,s-1] \cup [s+1,k-1] \cup \{k+1,k+2\}$ 
				and $3^{\wedge}A=[3,3k+2].$
				\item If $\{x,y,z\}=\{3,s,t\}$, where $6 \leq s+1<t \leq k-2$, then
				$A=\{0,1,2\} \cup [4,s-1] \cup [s+1,t-1] \cup [t+1,k+2]$ 
				and $3^{\wedge}A=\{3\} \cup [5,3k+3].$
			\end{enumerate}
			Therefore, in both the cases, we have $|3^{\wedge}A|=3k.$
			\item[\upshape (iv)] 
			
			If $\{x,y,z\}=\{r,s,t\}$, where $ 4\leq r<s<t\leq k-2 ~ \text{and} ~ s-r \geq 2,~ t-s \geq 2$, then 
			$A=[0,r-1] \cup [r+1,s-1] \cup [s+1,t-1] \cup [t+1,k+2]$ and $3^{\wedge}A=[3,3k+3].$ 
			Therefore,  $|3^{\wedge}A|=3k+1.$
		\end{enumerate}
	\end{proof}
	
	Since $|3^{\wedge}A|$ is translation invariant, the proof of the following proposition follows from the Proposition \ref{3 elmnt gen case}.
	\begin{prop}\label{prop3.12}
		Let $k \geq 11$ be a positive integer. Let \begin{align*}
			A=[0,k+2] \setminus \{x,y,z\}, ~ \text{where} ~ 2 \leq x<y<z\leq k. 
		\end{align*}
		\begin{enumerate}
			\item [\upshape (i)] If $\{x,y,z\}$ is one of the sets  $\{k-4,k-2,k\}$, $\{3,k-2,k\}$, $\{2,k-2,k\}$,  $\{2,k-3,k\}$, and $\{3,k-3,k\}$, then $|3^{\wedge}A|=3k-2.$
			
			\item [\upshape (ii)] If $\{x,y,z\}$  is one of the sets  $\{r,k-2,k\}$ with $r \in [4,k-5]$, $\{r,k-3,k\}$ with $r \in [4,k-5]$, and $\{3,s,k\}$ with $s \in [5,k-4]$, then $|3^{\wedge}A|=3k-1.$
			
			\item [\upshape (iii)] If $\{x,y,z\}$  is one of the sets $\{2,s,t\}$ with $7 \leq s+1<t\leq k-2$, and $\{r,s,k-1\}$ with $5 \leq r+1< s \leq k-3$, then $|3^{\wedge}A|=3k.$
		\end{enumerate}
		
	\end{prop}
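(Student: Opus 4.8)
The plan is to deduce the proposition from Proposition~\ref{3 elmnt gen case} by exploiting the reflection symmetry of the restricted sumset. As recorded in the introduction, $|h^{\wedge}A|$ is invariant under both dilation and translation; applying the dilation by $-1$ and then translating by $k+2$ gives
\[|3^{\wedge}A| = |3^{\wedge}((k+2)-A)|.\]
Since $A = [0,k+2]\setminus\{x,y,z\}$, one has $(k+2)-A = [0,k+2]\setminus\{k+2-z,\,k+2-y,\,k+2-x\}$, so passing to $(k+2)-A$ simply replaces the deleted triple $\{x,y,z\}$ by its reflection $\{k+2-z,\,k+2-y,\,k+2-x\}$, and the gap conditions $y-x\ge2$, $z-y\ge2$ are preserved. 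Thus it suffices to show that every family of triples listed in parts (i)--(iii) is carried by $x\mapsto k+2-x$ onto a family already treated in Proposition~\ref{3 elmnt gen case}, and then to read off the corresponding value of $|3^{\wedge}A|$.

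First I would verify the matching family by family. For part (i), the reflection sends $\{k-4,k-2,k\},\{3,k-2,k\},\{2,k-2,k\},\{2,k-3,k\},\{3,k-3,k\}$ to $\{2,4,6\},\{2,4,k-1\},\{2,4,k\},\{2,5,k\},\{2,5,k-1\}$, respectively, which are exactly the five triples of Proposition~\ref{3 elmnt gen case}(i), each giving $3k-2$. For part (ii), the families $\{r,k-2,k\}$ and $\{r,k-3,k\}$ with $r\in[4,k-5]$ are sent to $\{2,4,t\}$ and $\{2,5,t\}$ with $t=k+2-r\in[7,k-2]$, while $\{3,s,k\}$ with $s\in[5,k-4]$ is sent to $\{2,k+2-s,k-1\}$ with $k+2-s\in[6,k-3]$; all three occur in Proposition~\ref{3 elmnt gen case}(ii) and yield $3k-1$. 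For part (iii), $\{2,s,t\}$ with $7\le s+1<t\le k-2$ is sent to $\{k+2-t,\,k+2-s,\,k\}$, whose entries satisfy $5\le(k+2-t)+1<(k+2-s)\le k-4$, and $\{r,s,k-1\}$ with $5\le r+1<s\le k-3$ is sent to $\{3,\,k+2-s,\,k+2-r\}$, whose entries satisfy $6\le(k+2-s)+1<(k+2-r)\le k-2$; these are precisely the two families of Proposition~\ref{3 elmnt gen case}(iii), giving $3k$.

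With the matching in place the proof is complete, since the value of $|3^{\wedge}A|$ for a given triple equals the value recorded in Proposition~\ref{3 elmnt gen case} for its reflection. The only substantive work is the endpoint bookkeeping: for each family one must confirm that the reflected index ranges land exactly in the ranges prescribed in Proposition~\ref{3 elmnt gen case} and that the strict-gap inequalities transform correctly under $x\mapsto k+2-x$. I expect no genuine difficulty here beyond this careful tracking of endpoints; the one point worth double-checking is that every case invoked from Proposition~\ref{3 elmnt gen case} remains valid for the slightly smaller range $k\ge11$ assumed in the present statement.
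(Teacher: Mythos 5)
Your proposal is correct and is exactly the paper's own argument: the paper proves Proposition~\ref{prop3.12} in one line by invoking translation/dilation invariance of $|3^{\wedge}A|$ and reducing to Proposition~\ref{3 elmnt gen case} via the reflection $A \mapsto (k+2)-A$, which is precisely the matching you carry out (and verify in more detail than the paper does). Your closing caveat about the hypothesis $k\geq 11$ versus the $k\geq 13$ assumed in Proposition~\ref{3 elmnt gen case} is a discrepancy present in the paper itself, not a defect introduced by your argument.
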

	
\noindent	In the following proposition, we compute the cardinality of $3^{\wedge}A$ when $A=[0,k+2] \setminus \{1,y,z\}, ~ \text{where} ~ 2 \leq y<z\leq k+1$. Since $|3^{\wedge}A|$ is translation and dilation invariant, proof of Proposition \ref{prop3.15} follows from Proposition \ref{3 case for x=1}.

	\begin{prop}\label{3 case for x=1}
		Let $k \geq 13$ be a positive integer. Let $$A=[0,k+2] \setminus \{1,y,z\}, ~ \text{where} ~ 2 \leq y<z\leq k+1.$$
		\begin{enumerate}
			\item [ \upshape(i)] If $\{y,z\} =\{2,k+1\}$, then $|3^{\wedge}A|=3k-5$.
			\item [ \upshape(ii)] If $\{y,z\} $ is one of the sets $\{2,4\}$, $\{2,5\}$, $\{2,k-1\}$, $\{2,k\}$, $\{3,k+1\}$, and $\{4,k+1\}$, then $|3^{\wedge}A|=3k-4$.
			\item [ \upshape(iii)] If $\{y,z\} $ is one of the sets $\{3,4\}$, $\{3,5\}$, $\{3,6\}$, $\{3,k-1\}$, $\{3,k\}$, $\{4,5\}$, $\{4,k-1\}$, $\{4,k\}$, $\{k-1,k\}$, $\{k-2,k-1\}$, $\{k-3,k\}$, $\{k-2,k\}$, $\{2,t\}$ where $t \in [6, k-2]$, and $\{s,k+1\}$ where $s \in [5, k-3]$, then $|3^{\wedge}A|=3k-3$. 
			\item [\upshape (iv)] If $\{y,z\} $ is one of the sets $\{5,6\}$, $\{3,t\}$ where $t \in[7, k-2]$, $\{4,t\}$ where  $t \in [6,k-2]$, $\{s,k-1\}$ where $s \in [5,k-3]$, and $\{s,k\}$ where $s \in [5,k-4]$,  then $|3^{\wedge}A|=3k-2$.
			\item [\upshape (v)] If $\{y,z\}$ is one of the sets $\{s,s+1\}$ where $s \in[6,k-3]$, and $\{s,t\}$ where $6 \leq s+1<t \leq k-2$, then  $|3^{\wedge}A|=3k-1$.
		\end{enumerate}
		\begin{proof}
			\begin{enumerate}
				\item [ \upshape(i)] If $\{y,z\}=\{2,k+1\}$, then $A=\{0\} \cup [3,k] \cup \{k+2\}$ and $3^{\wedge}A=[7,3k+1].$
				Therefore,
				$|3^{\wedge}A|=3k-5.$		
				
				\item [ \upshape(ii)]
				\begin{enumerate}
					\item If $\{y,z\}=\{2,4\}$, then $A=\{0,3\} \cup [5,k+2]$ and $3^{\wedge}A=[8,3k+3].$
					\item If $\{y,z\}=\{2,5\}$, then $A=\{0,3,4\} \cup [6,k+2]$ and $3^{\wedge}A=\{7\} \cup [9,3k+3].$
					\item If $\{y,z\}=\{2,k-1\}$, then $A=\{0\} \cup [3,k-2] \cup [k,k+2]$ and $3^{\wedge}A=[7,3k+1]\cup \{3k+3\}.$
					\item If $\{y,z\}=\{2,k\}$, then $A=\{0\} \cup [3,k-1] \cup \{k+1,k+2\}$ and $3^{\wedge}A= [7,3k+2].$
					\item If $\{y,z\}=\{3,k+1\}$, then $A=\{0,2\} \cup [4,k] \cup \{k+2\}$ and $3^{\wedge}A= [6,3k+1].$
					\item If $\{y,z\}=\{4,k+1\}$, then $A=\{0,2,3\} \cup [5,k] \cup \{k+2\}$ and $3^{\wedge}A= \{5\} \cup [7,3k+1].$
				\end{enumerate}
				Therefore, in each of the cases, we have
			$|3^{\wedge}A|=3k-4.$
				
				\item [ \upshape(iii)]
				\begin{enumerate}
					\item If $\{y,z\}=\{3,4\}$, then $A=\{0,2\} \cup [5,k+2]$ and $3^{\wedge}A=[7,3k+3].$
					\item If $\{y,z\}=\{3,5\}$, then $A=\{0,2,4\} \cup [6,k+2]$ and $3^{\wedge}A=\{6\} \cup [8,3k+3].$		
					\item If $\{y,z\}=\{3,6\}$, then $A=\{0,2,4,5\} \cup [7,k+2]$ and $3^{\wedge}A=\{6,7\} \cup [9,3k+3].$
					\item If $\{y,z\}=\{3,k-1\}$, then $A=\{0,2\} \cup [4,k-2] \cup [k,k+2]$ and $3^{\wedge}A= [6,3k+1]\cup \{3k+3\}.$
					\item If $\{y,z\}=\{3,k\}$, then $A=\{0,2\} \cup [4,k-1] \cup \{k+1,k+2\}$ and $3^{\wedge}A= [6,3k+2].$
					\item If $\{y,z\}=\{4,5\}$, then $A=\{0,2,3\} \cup [6,k+2]$ and $3^{\wedge}A= \{5\} \cup [8,3k+3].$
					\item If $\{y,z\}=\{4,k-1\}$, then $A=\{0,2,3\} \cup [5,k-2] \cup [k,k+2]$ and $3^{\wedge}A= \{5\} \cup [7,3k+1] \cup \{3k+3\}.$
					\item If $\{y,z\}=\{4,k\}$, then $A=\{0,2,3\} \cup [5,k-1] \cup \{k+1,k+2\}$ and $3^{\wedge}A= \{5\} \cup [7,3k+2].$
					\item If $\{y,z\}=\{k-1,k\}$, then $A=\{0\} \cup [2,k-2] \cup \{k+1,k+2\}$, and $3^{\wedge}A= [5,3k+1].$
					\item If $\{y,z\}=\{k-2,k-1\}$, then $A=\{0\} \cup [2,k-3] \cup [k,k+2]$ and $3^{\wedge}A= [5,3k] \cup \{3k+3\}.$
					\item If $\{y,z\}=\{k-3,k\}$, then $A=\{0\} \cup [2,k-4] \cup \{k-2,k-1,k+1,k+2\}$ and $3^{\wedge}A= [5,3k-1] \cup \{3k+1,3k+2\}.$
					\item If $\{y,z\}=\{k-2,k\}$, then $A=\{0\} \cup [2,k-3] \cup \{k-1,k+1,k+2\}$ and $3^{\wedge}A= [5,3k] \cup \{3k+2\}.$
					\item If $\{y,z\}=\{2,t\}$, where $t \in [6,k-2]$, then $A=\{0\} \cup [3,t-1] \cup [t+1,k+2]$ and $3^{\wedge}A= [7,3k+3].$
					\item If $\{y,z\}=\{s,k+1\}$, where $s \in [5,k-3]$, then $A=\{0\} \cup [2,s-1] \cup [s+1,k] \cup \{k+2\}$ and $3^{\wedge}A= [5,3k+1].$
				\end{enumerate}
				Therefore, in each of the cases, we have
					$|3^{\wedge}A|=3k-3.$
				
				\item [ \upshape(iv)]
				\begin{enumerate}
					\item If $\{y,z\}=\{5,6\}$, then $A=\{0\} \cup [2,4] \cup [7,k+2]$ and $3^{\wedge}A=[5,7] \cup [9,3k+3].$
					\item If $\{y,z\}=\{3,t\}$, where $t \in [7,k-2]$, then $A=\{0,2\} \cup [4,t-1] \cup [t+1,k+2]$ and $3^{\wedge}A= [6,3k+3].$
					\item If $\{y,z\}=\{4,t\}$, where $t \in [6,k-2]$, then $A=\{0,2,3\} \cup [5,t-1] \cup [t+1,k+2]$ and $3^{\wedge}A= \{5\} \cup [7,3k+3].$
					\item If $\{y,z\}=\{s,k-1\}$, where $s \in [5,k-3]$, then $A=\{0\} \cup [2,s-1] \cup [s+1,k-2] \cup [k,k+2]$ and $3^{\wedge}A= [5,3k+1] \cup \{3k+3\}.$
					\item If $\{y,z\}=\{s,k\}$, where $s \in [5,k-4] $, then $A=\{0\} \cup [2,s-1] \cup [s+1,k-1] \cup \{k+1,k+2\}$ and $3^{\wedge}A= [5,3k+2].$
				\end{enumerate}
				Therefore, in each of the cases, we have
					$|3^{\wedge}A|=3k-2.$
				
				\item [\upshape (v)]
				\begin{enumerate}
					\item If $\{y,z\}=\{s,s+1\}$, where $s \in [6,k-3]$, then $A=\{0\} \cup [2,s-1] \cup [s+2,k+2]$ and $3^{\wedge}A=[5,3k+3]$.
					\item If $\{y,z\}=\{s,t\}$, where $6 \leq s+1<t \leq k-2$, then $A=\{0\} \cup [2,s-1] \cup [s+1,t-1] \cup [t+1,k+2]$ and $3^{\wedge}A=[5,3k+3]$.
				\end{enumerate}
				Therefore, in both the cases, we have
					$|3^{\wedge}A|=3k-1.$
			\end{enumerate}		
		\end{proof}
	\end{prop}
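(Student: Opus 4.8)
The plan is to compute $3^{\wedge}A$ directly for each configuration of $\{y,z\}$, exploiting that $A$ is the full interval $[0,k+2]$ with only three points removed, so it is a disjoint union of at most four subintervals (blocks). For the full interval one has $3^{\wedge}[0,k+2]=[3,3k+3]$, a single block of consecutive integers, and the effect of deleting $1,y,z$ is merely to trim and perforate this interval near its two ends. Concretely, the minimum of $3^{\wedge}A$ is the sum of the three smallest surviving elements of $A$ and the maximum is the sum of the three largest; the whole difficulty is to decide whether the set is a full interval between these two bounds or carries a few isolated points and single-integer gaps at either end.

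The key structural observation I would establish first is a \emph{decoupling} principle: since $k\geq 13$, whatever the positions of $y$ and $z$, the set $A$ contains a long central interval, and the sums obtained by combining one small element, one large element, and one element of this central block already sweep out every integer strictly between the bottom and top boundary regions without gaps. Hence any exceptional behaviour of $3^{\wedge}A$ is confined to a bounded neighbourhood of its minimum (controlled entirely by which of $0,2,3,4,\dots$ survive, i.e.\ by how small $y$ is) and to a bounded neighbourhood of its maximum (controlled by which of $k+2,k+1,k,\dots$ survive, i.e.\ by how large $z$ is). This reduces the problem to an essentially independent analysis of the two ends, and it is precisely this decoupling that the hypothesis $k\geq 13$ is there to guarantee.

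With the decoupling in hand I would stratify the configurations exactly as in the statement. When both $y$ and $z$ sit in the deep interior (case (v)), the three smallest elements are $0,2,3$ and the three largest are $k,k+1,k+2$, so $3^{\wedge}A=[5,3k+3]$ is a full interval of size $3k-1$; this is the generic baseline. Every other case is a controlled deviation from it: pushing $y$ down to $2,3,4,5$ raises the minimum and may strand the minimal sum as an isolated point followed by a gap (for instance $\{2,5\}$ yields $\{7\}\cup[9,3k+3]$), while pushing $z$ up to $k-1,k,k+1$ lowers the maximum and may strand the maximal sum similarly (for instance $z=k-1$ leaves the isolated top point $3k+3$ separated by a gap at $3k+2$). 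For each listed pair I would write $A$ explicitly as its blocks, read off the three smallest and three largest elements to fix the extreme values, and then check by hand the representability of the handful of candidate integers adjacent to each extreme; summing the resulting interval length with the number of surviving isolated points gives the stated cardinality, and the five target values $3k-5,\dots,3k-1$ correspond precisely to the total deficit accumulated at the two ends.

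The main obstacle will be the boundary bookkeeping rather than any single hard idea: for each configuration one must verify exactly which of the few smallest (respectively largest) candidate sums fail to be expressible as a sum of three \emph{distinct} surviving elements. These checks are elementary but delicate, because a value can be missing either because the only element realizing it (such as the deleted $1$) is gone, or because every candidate representation would be forced to repeat an element; it is the distinctness constraint of the restricted sumset that produces the isolated points and the single-integer gaps (at values such as $8$ or $3k+2$) that distinguish, say, cardinality $3k-4$ from $3k-3$. Organizing these finitely many endpoint verifications, and confirming through the decoupling step that no gap can ever occur away from the two ends, is the real crux of the argument.
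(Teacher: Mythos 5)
Your proposal is correct and follows essentially the same route as the paper: in both, one writes $A$ explicitly as a union of blocks for each configuration of $\{y,z\}$, observes that $3^{\wedge}A$ is a full interval except for finitely many trimmed or isolated values at the two ends (your sample computations, e.g.\ $\{7\}\cup[9,3k+3]$ for $\{2,5\}$ and the isolated $3k+3$ when $z=k-1$, agree exactly with the paper's), and counts. Your explicit ``decoupling'' lemma is merely a uniform justification of the step the paper labels ``it is easy to see,'' not a different method.
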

	
	\begin{prop}\label{prop3.15}
		Let $k \geq 11$ be a positive integer. Let $$A=[0,k+2] \setminus \{x,y,k+1\}, ~ \text{where} ~ 1 \leq x<y\leq k.$$
		\begin{enumerate}
			\item [ \upshape(i)] If $\{x,y\}=\{1,k\}$, then $|3^{\wedge}A|=3k-5$.
			
			\item [ \upshape(ii)] If $\{x,y\}$ is one of the sets $\{k-2,k\}$, $\{k-3,k\}$, $\{3,k\}$, $\{2,k\}$, $\{1,k-1\}$, and $\{1,k-2\}$, then $|3^{\wedge}A|=3k-4$.
			
			\item [ \upshape(iii)] If $\{x,y\} $ is one of the sets $\{k-2,k-1\}$, $\{k-3,k-1\}$, $\{k-4,k-1\}$, $\{3,k-1\}$, $\{2,k-1\}$, $\{k-3,k-2\}$, $\{3,k-2\}$, $\{2,k-2\}$, $\{2,3\}$, $\{3,4\}$, $\{2,5\}$, $\{2,4\}$, $\{r,k\}$ where $ r \in [4,k-4]$, and $\{1,s\}$ where $ s \in [5,k-3]$, then $|3^{\wedge}A|=3k-3$. 
			
			\item [\upshape (iv)] If $\{x,y\}$ is one of the sets $\{k-4,k-3\}$, $\{r,k-1\}$ where $r \in [4,k-5]$, $\{r,k-2\}$ where $r \in [4, k-4]$, $\{3,s\}$ where $s \in [5,k-3]$, and $\{2,s\}$ where $s \in [6,k-3],$  then $|3^{\wedge}A|=3k-2$.
			
			\item [\upshape (v)] If $\{x,y\}$ is one of the sets $\{r,r+1\}$ where $r \in [4,k-5]$, and $\{r,s\}$ where $5 \leq r+1<s \leq k-3$, then  $|3^{\wedge}A|=3k-1$.
		\end{enumerate}
	\end{prop}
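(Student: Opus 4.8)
The plan is to deduce Proposition \ref{prop3.15} directly from the already-established Proposition \ref{3 case for x=1} by exploiting the reflection $a \mapsto (k+2)-a$, exactly as the remark preceding the statement suggests. Writing $A = [0,k+2]\setminus\{x,y,k+1\}$ with $1 \le x < y \le k$, I would first observe that the affine map $a \mapsto (k+2)-a$ fixes the interval $[0,k+2]$ setwise and carries the deleted triple $\{x,y,k+1\}$ to $\{1,\,(k+2)-y,\,(k+2)-x\}$. Hence $(k+2)-A = [0,k+2]\setminus\{1,y',z'\}$, where $y' = (k+2)-y$ and $z' = (k+2)-x$ satisfy $2 \le y' < z' \le k+1$; this is precisely the range of deletions covered by Proposition \ref{3 case for x=1}.

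Next, since $|3^{\wedge}A|$ is translation and dilation invariant (here with dilation factor $-1$ and translation by $k+2$), one has $|3^{\wedge}A| = |3^{\wedge}((k+2)-A)|$. Thus the value of $|3^{\wedge}A|$ for a deletion set $\{x,y,k+1\}$ equals the value given by Proposition \ref{3 case for x=1} for the reflected deletion set $\{1,y',z'\}$, and no independent computation of any restricted sumset is required.

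The remaining work is pure bookkeeping: match each family $\{x,y\}$ appearing in Proposition \ref{prop3.15} with its image $\{y',z'\} = \{(k+2)-y,\,(k+2)-x\}$ in Proposition \ref{3 case for x=1} and read off the claimed cardinality. For example, item (i) is $\{1,k\}\mapsto\{2,k+1\}$ with value $3k-5$; in item (ii) the pairs $\{k-2,k\},\{k-3,k\},\{3,k\},\{2,k\},\{1,k-1\},\{1,k-2\}$ map to $\{2,4\},\{2,5\},\{2,k-1\},\{2,k\},\{3,k+1\},\{4,k+1\}$, all listed in item (ii) of Proposition \ref{3 case for x=1} with value $3k-4$. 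The parametrized families transform under $r \mapsto (k+2)-r$: for instance $\{r,k\}$ with $r\in[4,k-4]$ becomes $\{2,t\}$ with $t=(k+2)-r\in[6,k-2]$, and $\{r,s\}$ with $5 \le r+1 < s \le k-3$ becomes $\{s',t'\}$ with $6 \le s'+1 < t' \le k-2$, matching the ranges of items (iii) and (v) respectively.

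The main (and essentially only) obstacle is to confirm that this case-to-case correspondence is a bijection of the two lists: that every family in Proposition \ref{prop3.15} has an image in Proposition \ref{3 case for x=1}, that none is double-counted, and—most delicately—that the boundary values of each parametrized range transform correctly under reflection, so that a pair does not slip from one cardinality class (say item (iii), value $3k-3$) into the adjacent one (item (iv), value $3k-2$). Once this matching is verified endpoint by endpoint for all five output values $3k-5,\,3k-4,\,3k-3,\,3k-2,\,3k-1$, the conclusion follows immediately from the invariance identity.
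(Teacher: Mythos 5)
Your proposal is correct and is essentially the paper's own proof: the paper likewise obtains Proposition \ref{prop3.15} from Proposition \ref{3 case for x=1} via the reflection $a \mapsto (k+2)-a$ and the translation/dilation invariance of $|3^{\wedge}A|$, with the case-by-case matching of deleted pairs left as routine bookkeeping. Your endpoint checks of the parametrized ranges (e.g.\ $\{r,k\}$, $r\in[4,k-4]$, mapping to $\{2,t\}$, $t\in[6,k-2]$) are exactly the verification the paper implicitly relies on.
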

	
\noindent	In the following propositions, we compute the cardinality of $h^{\wedge}A$ for $A=[0,k+2] \setminus \{x,x+1,k+1\}, ~ \text{where} ~ 1 \leq x \leq k-1$ and $h \geq 4$. Since $|h^{\wedge}A|$ is translation and dilation invariant, proof of  Proposition \ref{prop 3.16(i)} follows from Proposition \ref{prop 3.17(i)}.
	
	\begin{prop}\label{prop 3.16(i)}
		Let $h \geq 4$ and $k \geq 3h+4$ be positive integers. Let $$A=[0,k+2] \setminus \{x,x+1,k+1\}, ~ \text{where} ~ 1 \leq x \leq k-1.$$
		\begin{enumerate}
			\item [\upshape (i)] If $x \in [1,h-1]$, then $|h^{\wedge}A|= hk - h^2 + 2x +2.$
			\item [\upshape (ii)] If $x \in \{h,k-h\}$, then $|h^{\wedge}A|= hk - h^2 + 2h.$
			\item [\upshape (iii)] If $x \in [h+1,k-h-1]$, then $|h^{\wedge}A|= hk - h^2 + 2h +2.$
			\item [\upshape (iv)] If $x \in [k-(h-1),k-1]$, then $|h^{\wedge}A|= (h+2)k - h^2 - 2x+2.$
		\end{enumerate}
	\end{prop}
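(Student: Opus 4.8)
The plan is to follow the explicit method used in the previous propositions: realise $A$ as a union of a few integer intervals, read off the smallest and largest elements of $h^{\wedge}A$, and then count, subtracting only the finitely many ``holes'' that survive between these two extremes. Since $[0,k+2]$ has $k+3$ elements, $A$ has exactly $k$ elements. A convenient first move is the reflection $A\mapsto (k+2)-A$, which by the translation and dilation invariance recorded in the introduction gives $|h^{\wedge}A|=|h^{\wedge}((k+2)-A)|$, where $(k+2)-A=[0,k+2]\setminus\{1,\,k+1-x,\,k+2-x\}$. This interchanges the isolated deletion $k+1$ with the isolated deletion $1$ and turns the low pair $\{x,x+1\}$ into the pair $\{k+1-x,k+2-x\}$, sending the four ranges of $x$ to the corresponding ranges of $y:=k+1-x$; thus the top-heavy range (iv) can be analysed in exactly the same way as the bottom-heavy range (i).

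Next I would pin down the two endpoints of $h^{\wedge}A$. The minimum is the sum of the $h$ smallest elements of $A$ and the maximum is the sum of the $h$ largest. For $x\in[1,h-1]$ the $h$ smallest elements are $\{0,\dots,x-1\}\cup\{x+2,\dots,h+1\}$, of sum $\tfrac{(h+1)(h+2)}{2}-(2x+1)$, while the $h$ largest are $\{k+2\}\cup\{k-h+2,\dots,k\}$, of sum $hk+2-\tfrac{(h-1)(h-2)}{2}$; their difference plus one is exactly $hk-h^{2}+2x+2$, giving (i). For interior $x\in[h+1,k-h-1]$ both ends become ``clean'': the $h$ smallest are $\{0,\dots,h-1\}$ and the $h$ largest are unchanged, and the same subtraction yields $hk-h^{2}+2h+2$, which is (iii). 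For the top-heavy range (iv) I would pass to the reflected set $[0,k+2]\setminus\{1,y,y+1\}$ with $y=k+1-x\in[2,h]$, compute its endpoints in the same fashion, and obtain the full-interval count $hk-h^{2}+2y=(h+2)k-h^{2}-2x+2$.

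The heart of the argument is to prove that $[\min,\max]$ is completely filled, with the sole exceptions occurring in (ii). Two mechanisms could in principle create holes. First, the lone deletion $k+1$ creates none, because $k+2\in A$ and, since $k\ge 3h+4$, the greatest sum that avoids $k+2$ exceeds the least sum that uses $k+2$ by $(h-1)(k-h+1)-2\ge 0$, so these two families of sums overlap and no value is skipped. Second, a deleted consecutive pair creates no hole as soon as a surviving element sits immediately beyond the pair to bridge the unit step. The genuine exception is $x\in\{h,k-h\}$: when $x=h$ the $h$ smallest elements are precisely $\{0,\dots,h-1\}$, yet both $h$ and $h+1$ are deleted, so the second smallest sum exceeds the minimum by $3$; the two values $\min+1$ and $\min+2$ are therefore unattainable, lowering the count from $hk-h^{2}+2h+2$ to $hk-h^{2}+2h$, and $x=k-h$ is its mirror image under the reflection above. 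This produces (ii).

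I expect the gap analysis of the previous paragraph to be the main obstacle. The subtle point is that a deleted pair abutting an end of the sorted list behaves very differently according to whether a surviving element lies immediately beyond it: with such a buffer the unit steps are preserved and $h^{\wedge}A$ stays an interval, whereas at $x=h,k-h$ the buffer is exactly the deleted pair itself, forcing the two holes. Making this precise requires an exchange argument on the sorted elements of $A$, and the hypothesis $k\ge 3h+4$ is used exactly to guarantee a long run of consecutive integers separating the low and high deletions, so that every intermediate value is realizable and the only surviving holes are the two explicit ones.
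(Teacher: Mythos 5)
Your proposal follows essentially the same route as the paper's proof: write $A$ explicitly as a union of intervals, obtain the endpoints of $h^{\wedge}A$ as the sums of the $h$ smallest and of the $h$ largest elements, and verify that $[\min,\max]$ is completely filled except for the two holes in case (ii); the reflection $a\mapsto(k+2)-a$ that you use for case (iv) is also how the paper links this proposition to Proposition~\ref{prop 3.17(i)}. All of your endpoint computations and the resulting counts agree with the paper's, and your deferral of the interval-filling verification to a bridging/exchange argument is at the same level of detail as the paper's ``it is easy to see.''

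There is, however, one concrete error, and it is the only justification you give for the subcase $x=k-h$ of (ii): $x=k-h$ is \emph{not} the mirror image of $x=h$. Under $a\mapsto(k+2)-a$, the $x=h$ set $[0,k+2]\setminus\{h,h+1,k+1\}$ maps to $[0,k+2]\setminus\{1,k-h+1,k-h+2\}$, whereas the $x=k-h$ set maps to $[0,k+2]\setminus\{1,h+1,h+2\}$; the two cases are not reflections of each other, so as written the subcase $x=k-h$ is unproved. The repair is short: in the reflected set $[0,k+2]\setminus\{1,h+1,h+2\}$ the $h$ smallest elements are $\{0\}\cup[2,h]$ and the deleted pair lies immediately above them, so your $x=h$ mechanism applies verbatim (any other $h$-term sum must use an element $\geq h+3$, hence is at least $\min+3$), producing the two holes $\min+1$, $\min+2$ and the count $hk-h^2+2h$. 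Alternatively, argue directly on $A=[0,k-h-1]\cup[k-h+2,k]\cup\{k+2\}$, as the paper does: any sum other than the maximal one $(k+2)+\sum_{j=k-h+2}^{k}j$ either omits $k+2$ (losing at least $h+3$) or replaces $k-h+2$ by $k-h-1$ (losing exactly $3$), so precisely $\max-1$ and $\max-2$ are missing. With this one fix your outline is sound.
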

	\begin{proof}
		\begin{enumerate}
			\item [\upshape (i)] If $x \in [1,h-1]$, then $A= [0,x-1] \cup [x+2, k] \cup \{k+2\}$. It is easy to see that $$h^{\wedge}A=\bigg[\dfrac{h(h+3)}{2}-2x, hk - \dfrac{h(h-3)}{2}+1\bigg].$$
			Therefore, we have $|h^{\wedge}A|= hk - h^2 + 2x +2.$
			\item [\upshape (ii)] 
			\begin{enumerate}
				\item [\upshape (a) ] If $x=h$, then $A=[0,h-1] \cup [h+2,k] \cup \{k+2\}$. Observe that, $\dfrac{h(h-1)}{2} +1$ and $\dfrac{h(h-1)}{2} +2$ do not belong to $h^{\wedge}A$. Clearly, $$h^{\wedge}A= \bigg\{\dfrac{h(h-1)}{2}\bigg\} \bigcup \bigg[\dfrac{h(h-1)}{2} +3, hk - \dfrac{h(h-3)}{2}+1 \bigg].$$
				\item [\upshape (b) ] If $x=k-h$, then $A=[0,k-(h+1)]\cup [k-(h-2),k] \cup \{k+2\}.$ Observe that, $hk - \dfrac{h(h-3)}{2}-1$, $hk- \dfrac{h(h-3)}{2}$ do not belong to $h^{\wedge}A$. Clearly, $$h^{\wedge}A= \bigg[\dfrac{h(h-1)}{2}, hk - \dfrac{h(h-3)}{2}-2 \bigg] \bigcup \bigg\{hk - \dfrac{h(h-3)}{2}+1\bigg\}.$$
			\end{enumerate}
			Therefore, in both the cases $|h^{\wedge}A|= hk-h^2+2h.$
			\item [\upshape (iii)] If $x \in [h+1,k-h-1]$, then $A=[0,x-1] \cup [x+2,k] \cup \{k+2\}$. It is easy to see that $$h^{\wedge}A=\bigg[\dfrac{h(h-1)}{2}, hk - \dfrac{h(h-3)}{2}+1\bigg].$$
			Therefore, $|h^{\wedge}A|= hk-h^2+2h+2.$
			
			\item[\upshape (iv)] If $x \in [k-(h-1),k-1],$ then $A=[0,x-1] \cup [x+2,k] \cup \{k+2\}$. Clearly, $$h^{\wedge}A= \bigg[\dfrac{h(h-1)}{2}, (h+2)k - \dfrac{h(h+1)}{2}-2x+1\bigg]. $$
			Therefore, $|h^{\wedge}A|=(h+2)k -h^2-2x+2.$
		\end{enumerate}
	\end{proof}
	
	\begin{prop}\label{prop 3.17(i)}
		Let $h \geq 4$ and $k \geq 3h+2$ be positive integers. Let $$A=[0,k+2] \setminus \{1,y,y+1\}, ~ \text{where} ~ 2 \leq y \leq k.$$
		\begin{enumerate}
			\item [\upshape(i) ] If $y \in [k-(h-2),k]$, then $|h^{\wedge}A|= hk-h^2+2(k+1-y).$
			\item [\upshape (ii)] If $y \in \{h+1, k-h+1\}$, then $|h^{\wedge}A|= hk-h^2+2h.$
			\item [\upshape (iii)] If $y \in [h+2,k-h]$, then $|h^{\wedge}A|= hk-h^2+2h+2.$
			\item [\upshape (iv)] If $y \in [2,h]$, then $|h^{\wedge}A|= hk-h^2+2y.$
		\end{enumerate}
	\end{prop}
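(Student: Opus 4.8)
The plan is to read this proposition off from Proposition \ref{prop 3.16(i)} by the reflection $a\mapsto (k+2)-a$, which is exactly what the translation--dilation invariance recalled just before the statement is for. Put $\phi(a)=(k+2)-a=(-1)\ast a+(k+2)$. Since $h^{\wedge}\big((-1)\ast A+(k+2)\big)=(-1)\ast\big(h^{\wedge}A\big)+h(k+2)$, the cardinality $|h^{\wedge}A|$ is unchanged by $\phi$. Now $\phi$ fixes $[0,k+2]$ setwise and carries $\{1,y,y+1\}$ to $\{k+1,\,k+2-y,\,k+1-y\}$, so writing $x:=k+1-y$ we get
\[
\phi(A)=[0,k+2]\setminus\{x,\,x+1,\,k+1\},
\]
which is precisely the set treated in Proposition \ref{prop 3.16(i)}; and as $y$ runs through $[2,k]$ the parameter $x$ runs through $[1,k-1]$, matching the hypothesis there. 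Hence $|h^{\wedge}A|=|h^{\wedge}\phi(A)|$, and everything reduces to rewriting the four cases of Proposition \ref{prop 3.16(i)} in the variable $y$.

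Next I would line up the ranges under $x=k+1-y$: part (i) here ($y\in[k-(h-2),k]$) matches $x\in[1,h-1]$; part (ii) ($y\in\{h+1,k-h+1\}$) matches $x\in\{k-h,h\}$; part (iii) ($y\in[h+2,k-h]$) matches $x\in[h+1,k-h-1]$; and part (iv) ($y\in[2,h]$) matches $x\in[k-(h-1),k-1]$. Parts (ii) and (iii) are then immediate, since the values $hk-h^2+2h$ and $hk-h^2+2h+2$ carry no dependence on $x$; part (iv) follows from the formula $(h+2)k-h^2-2x+2$ of Proposition \ref{prop 3.16(i)}(iv), which becomes $hk-h^2+2y$ after substituting $x=k+1-y$; and part (i) comes the same way from the formula $hk-h^2+2x+2$ of part (i) there.

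The one place that needs genuine care is the hypothesis on $k$: Proposition \ref{prop 3.16(i)} is stated for $k\geq 3h+4$, while here $k\geq 3h+2$ is permitted, so for $k\in\{3h+2,\,3h+3\}$ the reduction has to be supported by a direct computation. That computation follows the template already used for Proposition \ref{prop 3.16(i)}: decompose $A$ into its maximal runs of consecutive integers, use that $j^{\wedge}[a,b]$ is the full interval $\big[\,ja+\binom{j}{2},\ jb-\binom{j}{2}\,\big]$, distribute the $h$ summands over the runs, and union the resulting intervals. I expect the only delicate point to be the endpoint bookkeeping in case (ii): exactly as for $x\in\{h,k-h\}$ in Proposition \ref{prop 3.16(i)}, the block of deleted elements forces the nearest available summand to lie three steps past the extreme one, so that the two integers adjacent to the smallest sum (for one value of $y$), respectively the largest sum (for the other), are not representable. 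Verifying that these two points, and no others, are omitted is precisely what drops the count from the interval value $hk-h^2+2h+2$ of case (iii) to $hk-h^2+2h$ in case (ii); the remaining cases yield honest intervals, and the cardinalities are then read off directly.
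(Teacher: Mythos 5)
Your reduction via $\phi(a)=(k+2)-a$ is exactly the paper's (implicit) justification: the paper proves Proposition \ref{prop 3.16(i)} by direct computation and disposes of this statement with a one-line appeal to translation--dilation invariance (its sentence actually asserts the implication in the reverse direction, from Proposition \ref{prop 3.17(i)} to Proposition \ref{prop 3.16(i)}, which leaves the present proposition with no proof at all; the reflection is an exact correspondence either way). Your alignment of the parameter ranges under $x=k+1-y$ is correct, parts (ii) and (iii) do follow immediately, and part (iv) checks out: $(h+2)k-h^2-2x+2$ becomes $hk-h^2+2y$. You are also more careful than the paper in flagging that Proposition \ref{prop 3.16(i)} assumes $k\geq 3h+4$ while this statement allows $k\geq 3h+2$, so the two boundary values of $k$ need a separate (routine) interval computation that the paper never supplies; your sketch of that computation is plausible, including the two missing sums in case (ii).

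However, there is a genuine gap in part (i), which you wave through with ``comes the same way.'' Substituting $x=k+1-y$ into the formula $hk-h^2+2x+2$ of Proposition \ref{prop 3.16(i)}(i) gives $hk-h^2+2(k+1-y)+2$, which is \emph{not} the stated value $hk-h^2+2(k+1-y)$: the two differ by $2$. Since the reflection is exact, your argument cannot yield the stated formula; carried out honestly, it shows instead that part (i) as printed is inconsistent with Proposition \ref{prop 3.16(i)}(i). A direct check confirms that the reflected formula is the correct one: for $h=4$, $k=14$, $y=k$, the set is $A=\{0\}\cup[2,13]\cup\{16\}$, and $4^{\wedge}A=[9,52]$ is a full interval of $44=hk-h^2+4$ elements, whereas the stated formula predicts $hk-h^2+2=42$. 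So the value in part (i) should be $hk-h^2+2(k+1-y)+2$, and the step you left unverified is precisely the bookkeeping that exposes this discrepancy; a proof that claims to establish the statement as printed necessarily fails there.
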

	
\noindent	In the following propositions, we compute the cardinality of $h^{\wedge}A$ for $A=[0,k+2] \setminus \{x,y,z\}, ~ \text{where} ~ 2 \leq x<y<z\leq k$ and $h \geq 4$.
	
	\begin{prop}\label{prop 3.13}
		Let $h \geq 4$ and $k \geq 3h+4$ be positive integers. Let $$A=[0,k+2] \setminus \{x,x+1,z\}, ~ \text{where} ~ 2 \leq x<z\leq k ~ \text{with} ~ z-x \geq 3.$$
		\begin{enumerate}
			\item [\upshape(i)] If $ x \in [2,h-2]$ and $z \in [5,h+1]$, then $|h^{\wedge}A|= hk-h^2+2x+z-1.$
			\item [\upshape (ii)] If $ x \in [2,h-1]$ and $z \in [h+2,k-h+1]$, then 
			$$|h^{\wedge}A| =
			\begin{cases}
				hk-h(h-1)+2x, ~ & ~ \text{if} ~ x \in [2,h-1] ~ \text{and} ~ z = h+2;\\
				hk-h(h-1)+2x+1, ~ & ~ \text{if}~ x \in [2,h-2] ~ \text{and} ~ z \in [h+3,k-h+1];\\
				hk-h(h-3)-2,~ & ~ \text{if}~ x=h-1 ~ \text{and} ~ z \in \{h+3,h+4\};\\
				hk-h(h-3)-1,~ & ~ \text{if}~ x=h-1 ~ \text{and} ~ z \in [h+5,k-h+1].
			\end{cases}$$
			\item [\upshape(iii)] If $x=h$ and $z \in [h+3,k-h+1]$, then $|h^{\wedge}A| = hk-h(h-3)-1.$
			
			\item [\upshape (iv)] If $ x \in [2,h]$ and $z = k-h+2$, then 
			\[\left|h^{\wedge}A\right| = 
			\begin{cases}
				hk-h(h-1)+2x, & ~ \text{if} ~ x \in [2,h-1]; \\
				hk-h(h-3)-2,   & ~ \text{if} ~ x=h.
			\end{cases}
			\]
			\item [\upshape (v)] If $ x \in [2,h]$ and $z \in [k-h+3,k]$, then
			\[\left|h^{\wedge}A\right| = 
			\begin{cases}
				(h+1)k -h^2 + 2x -z +3, & ~ \text{if} ~ x \in [2,h-1]; \\
				(h+1)k -h(h-2) -z +1,   & ~ \text{if} ~ x=h.
			\end{cases}
			\]
			
			\item [\upshape(vi)] If $ x \in [h+1,k-h-1]$ and $z \in [h+4,k-h+2]$, then 
			\[\left|h^{\wedge}A\right| = 
			\begin{cases}
				hk-h(h-3)+1, & ~ \text{if} ~ z \in [h+4,k-h+1]; \\
				hk-h(h-3),   & ~ \text{if} ~ z = k-h+2.
			\end{cases}
			\]
			
			\item [\upshape(vii)] If $ x \in [h+1,k-h]$ and $z \in [k-h+3,k]$, then
			\[\left|h^{\wedge}A\right| = 
			\begin{cases}
				(h+1)k-h(h-2)-z+3, & ~ \text{if} ~ x \in [h+1,k-h-1]; \\
				(h+1)k-h(h-2)-z+1,   & ~ \text{if} ~ x = k-h.
			\end{cases}
			\]

			
			\item [\upshape (viii)] If $ x \in [k-h+1,k-3]$ and $z \in [k-h+4,k]$, then $|h^{\wedge}A|= (h+3)k-h^2-(2x+z)+3.$
		\end{enumerate}
	\end{prop}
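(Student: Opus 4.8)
The plan is to prove each of the eight parts by a direct computation of $h^{\wedge}A$, in exactly the style of the preceding propositions. In every part one first writes $A=[0,k+2]\setminus\{x,x+1,z\}$ explicitly as a union of two or three intervals, according to where the block $\{x,x+1\}$ and the isolated point $z$ fall relative to the thresholds $h$ and $k-h$; this is precisely why the statement splits into these ranges. The cardinality is then extracted from $|h^{\wedge}A|=\max(h^{\wedge}A)-\min(h^{\wedge}A)+1-g$, where $g$ counts the integers in the closed range $[\min,\max]$ that are \emph{not} representable as a sum of $h$ distinct elements of $A$.

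The two extreme values are immediate: $\min(h^{\wedge}A)$ is the sum of the $h$ smallest elements of $A$, and $\max(h^{\wedge}A)$ the sum of the $h$ largest. When the block lies among the $h$ smallest elements, i.e. $x\le h-1$, the minimizing set is an initial segment with $x,x+1$ (and $z$, if $z$ is also small) deleted, so $\min(h^{\wedge}A)$ picks up the shift $2x$ together with a further contribution from $z$; when the deletions sit above the $h$-th smallest element the minimum is unperturbed. The symmetric analysis at the top governs the cases where $x$ or $z$ exceeds $k-h$. Since $|h^{\wedge}A|=|h^{\wedge}((k+2)-A)|$ and $(k+2)-A=[0,k+2]\setminus\{(k+2)-z,\,(k+1)-x,\,(k+2)-x\}$ is again a block-plus-singleton deletion, the reflection can be used—just as it relates Proposition \ref{prop 3.16(i)} to Proposition \ref{prop 3.17(i)}—to transfer the high-position configurations of parts (v), (vii) and (viii) to low-position ones and save roughly half of the computation.

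For the count $g$ of interior holes I would invoke the rigidity of sumsets of near-intervals. Deleting three points splits $[0,k+2]$ into at most three runs totalling $k$ elements, so by pigeonhole the longest run has at least $h+1$ consecutive integers (using $k\ge 3h+4$). A sliding argument—replacing a used element $e$ by $e+1$ whenever $e+1\in A$ is unused, which the long run always permits—then shows that every value strictly between the lowest and highest attainable sums is itself attained. Hence $g$ can be nonzero only at the two extremes, where the deleted points may force a single sum to be skipped; these are exactly the isolated points and unit gaps recorded in the earlier propositions (compare Proposition \ref{prop3.10}). Carrying out the arithmetic in each range then yields the stated closed forms together with their piecewise corrections.

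The main obstacle is this endpoint bookkeeping in the transitional sub-cases, namely those with piecewise answers such as parts (ii), (iv), (vi) and (vii). There one must decide, for each borderline position of $z$ (the values $z=h+2$ and $z=k-h+2$) and of $x$ (the value $x=h$), whether a single near-extremal sum is genuinely unrepresentable, thereby accounting for the $\pm1$ discrepancies: for instance verifying that exactly one value is lost when $z=h+2$ but none when $z\ge h+3$, and confirming that the gap created by $\{x,x+1\}$ and the gap created by $z$ cannot be bridged by one common choice of $h$ summands. Once these finitely many boundary configurations are checked in each range, every cardinality follows at once, so no conceptual difficulty remains beyond the careful enumeration.
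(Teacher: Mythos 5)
Your overall strategy --- decompose $A$ into its (here at most three) runs, compute $\min(h^{\wedge}A)$ and $\max(h^{\wedge}A)$ from the $h$ smallest and $h$ largest elements, exploit the reflection $A\mapsto (k+2)-A$, and then count the missing values --- is directionally the same direct-computation approach the paper uses; the paper simply writes out $h^{\wedge}A$ explicitly in each of the eight cases. The genuine gap is in the one piece of machinery you add, the pigeonhole-plus-sliding lemma, which is what is supposed to replace those explicit computations. As stated (``every value strictly between the lowest and highest attainable sums is itself attained, hence $g$ can be nonzero only at the two extremes'') the lemma is false, and it is contradicted by the very proposition you are proving: in part (iii), where $A=[0,h-1]\cup[h+2,z-1]\cup[z+1,k+2]$, the minimum of $h^{\wedge}A$ is $h(h-1)/2$, and both $h(h-1)/2+1$ and $h(h-1)/2+2$ lie strictly between the minimum and the maximum yet are not attained (any configuration other than $\{0,1,\dots,h-1\}$ must swap some $j\le h-1$ for an element $\ge h+2$, raising the sum by at least $3$). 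This is exactly why the answer there is $hk-h(h-3)-1$ rather than $hk-h(h-3)+1$. Any usable version of your lemma must assert fullness only on a sub-interval $[\min+c_1,\max-c_2]$ with explicit constants, and determining those constants case by case is precisely the ``endpoint bookkeeping'' you defer --- i.e., the entire content of the piecewise formulas in (ii), (iv), (vi), (vii).

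Moreover, the mechanism you propose cannot prove even the corrected statement: greedy replacement of a used element $e$ by $e+1$ can get stuck at sums that are far from both extremes and are attainable. Concretely, in case (vi) take $x\in[h+1,k-h-1]$, $z\in[h+4,k-h+1]$, so $A=[0,x-1]\cup[x+2,z-1]\cup[z+1,k+2]$, and consider the configuration consisting of $x-1$, $z-1$, and the top $h-2$ elements $k+5-h,\dots,k+2$ of the third run. For every used $e$, either $e+1\notin A$ (namely $e=x-1$, $e=z-1$, $e=k+2$) or $e+1$ is already used, so no single-step slide exists; yet the sum is well below the maximum, and the next integer is attained by the different configuration $\{x-2,z+1\}\cup\{k+5-h,\dots,k+2\}$. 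The standard repair --- and what the paper's ``it is easy to see'' assertions implicitly amount to --- is to argue per composition: for each way of splitting the $h$ summands among the three runs, the attainable sums form an interval, and for $k\ge 3h+4$ the intervals of neighbouring compositions overlap except near the two extremes, where only one or two compositions are feasible and the gaps between their intervals produce exactly the isolated missing values recorded in the proposition. With the lemma restated and proved that way, your plan does reduce to the same finite endpoint checks the paper carries out; as written, both the fullness claim and its proof mechanism fail.
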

	
	\begin{proof}
		\begin{enumerate}
			\item [\upshape(i).]
			
			If $ x \in [2,h-2]$ and $z \in [5,h+1]$, then $A=[0,x-1]\cup [x+2,z-1] \cup [z+1,k+2]$. It is easy to see that \begin{center}
				$h^{\wedge}A =
				\bigg[\dfrac{h(h+5)}{2}-(2x+z)+2, hk - \dfrac{h(h-5)}{2}\bigg].$
			\end{center} 
			Therefore, we have 
				$|h^{\wedge}A|=	hk-h^2+(2x+z)-1.$

			\item[\upshape (ii)] 
			\begin{enumerate}
				\item [\upshape (a)] If $x\in [2,h-1]$ and $z=h+2$, then $A = [0,x-1] \cup [x+2,h+1] \cup [h+3, k+2]$. Note that $\dfrac{h(h+3)}{2}-2x+1\notin h^{\wedge}A$ and 	$$	h^{\wedge}A =\left\{\dfrac{h(h+3)}{2}-2x\right\} \bigcup \left[\dfrac{h(h+3)}{2}-2x+2,hk - \dfrac{h(h-5)}{2} \right].$$ Therefore,
					$|h^{\wedge}A|=	hk-h^2+h+2x.$
				\item[\upshape (b)]If $x\in [2,h-2]$ and $z=[h+3,k-h+1]$, then $A = [0,x-1] \cup [x+2,z-1] \cup [z+1, k+2]$. Therefore	$$	h^{\wedge}A = \left[\dfrac{h(h+3)}{2}-2x,hk - \dfrac{h(h-5)}{2} \right].$$ Therefore,
					$|h^{\wedge}A|=	hk-h^2+h+2x+1.$
				\item [\upshape(c)]
					If $x = h-1$ and $z=h+3$, then $A = [0,h-2] \cup [h+1,h+2] \cup [h+4,k+2]$. Note that $\dfrac{h(h-1)}{2}+4 \notin h^{\wedge}A$ and \begin{center}$h^{\wedge}A = \left\{\dfrac{h(h-1)}{2}+2, \dfrac{h(h-1)}{2}+3\right\} \bigcup \left[\dfrac{h(h-1)}{2}+5, hk - \dfrac{h(h-5)}{2}\right].$
					\end{center} 
					 If $x = h-1$ and $z=h+4$, then $A = [0,h-2] \cup [h+1,h+3] \cup [h+5,k+2]$. Note that $\dfrac{h(h-1)}{2}+5 \notin h^{\wedge}A$ and 
					\begin{center}
						$h^{\wedge}A =  \left[\dfrac{h(h-1)}{2}+2, hk - \dfrac{h(h-5)}{2}\right] \setminus \left\{\dfrac{h(h-1)}{2}+5\right\} .$
					\end{center}
				Therefore, in both the cases, we have $|h^{\wedge}A|=hk-h(h-3)-2.$
				\item[\upshape (d)]	If $ x =h-1$ and $z \in [h+5, k-h+1]$. Then $A = [0,h-2] \cup [h+1, z-1] \cup [z+1,k+2]$. It is easy to see that \begin{center}
					$h^{\wedge}A =
					\bigg[\dfrac{h(h-1)}{2}+2, hk - \dfrac{h(h-5)}{2}\bigg].$
				\end{center} 
				Therefore, we have  $|h^{\wedge}A|=hk-h(h-3)-1.$
			\end{enumerate}
			\item [\upshape (iii)] If $x=h$ and $z \in [h+3, k-h+1]$, then $A= [0,h-1] \cup [h+2,z-1] \cup [z+1,k+2]$. Observe that, $ \dfrac{h(h-1)}{2}+1$ and $\dfrac {h(h-1)}{2}+2$ do not belong to $h^{\wedge}A .$ Clearly,  \begin{center}
				$h^{\wedge}A = \left\{\dfrac{h(h+1)}{2}\right\} \bigcup 
				\bigg[\dfrac{h(h-1)}{2}+3, hk - \dfrac{h(h-5)}{2}\bigg].$
			\end{center} 
			Therefore, we have 
				$|h^{\wedge}A|=	hk-h(h-3)-1.$	
			\item[ \upshape (iv)]
			\begin{enumerate}
				\item[\upshape (a)]
				If $ x \in [2,h-1]$ and $z =k-h+2$, then
				\begin{align*}
					A= [0,x-1] \cup [x+2,k-h+1] \cup [k-h+3,k+2].
				\end{align*} 
				Observe that, $hk - \dfrac{h(h-5)}{2}-1 \notin h^{\wedge}A$.
				Clearly, 
				\begin{align*}
					h^{\wedge}A & =  
					\bigg[\dfrac{(h+1)(h+2)}{2}-(2x+1), hk - \dfrac{h(h-5)}{2}-2\bigg] \bigcup \left\{hk - \dfrac{h(h-5)}{2}\right\}.
				\end{align*} 
				Therefore, we have 
					$|h^{\wedge}A|=	hk-h(h-1)+2x.$
				\item[\upshape(b)] If $x=h$ and $z= k-h+2$, then \begin{align*}
					A=[0,h-1] \cup [h+2,k-h+1] \cup [k-h+3,k+2].
				\end{align*} Observe that $\dfrac{h(h-1)}{2}+1$, $\dfrac {h(h-1)}{2}+2$, and $hk - \dfrac{h(h-5)}{2}-1$ do not belong to $h^{\wedge}A.$ Also, 
				\begin{align*}
					h^{\wedge}A = \left\{\dfrac{h(h-1)}{2}\right\} \bigcup 
					\bigg[\dfrac{h(h-1)}{2}+3, hk - \dfrac{h(h-5)}{2}-2\bigg] \bigcup \left\{hk - \dfrac{h(h-5)}{2}\right\}.
				\end{align*} 
				Therefore, 
					$|h^{\wedge}A|=	hk-h(h-3)-2.$
			\end{enumerate}
			
			\item[\upshape (v)] 
			\begin{enumerate}
				\item [\upshape (a)] If $x \in [2,h-1]$ and $z \in [k-h+3,k]$, then $A=[0,x-1] \cup [x+2,z-1] \cup [z+1,k+2]$. Clearly, 
				\begin{center}
					$h^{\wedge}A = \left[ \dfrac{(h+1)(h+2)}{2} -(2x+1), (h+1)k- \dfrac{(h-2)(h-1)}{2}-z+3\right].$
				\end{center}
				Therefore, 
					$|h^{\wedge}A|=	(h+1)k-h^2+2x-z+3.$
				
				\item [\upshape (b)] If $x=h$ and $z \in [k-h+3,k]$, then $A=[0,h-1] \cup [h+2,z-1] \cup [z+1,k+2]$. Note that $ \dfrac{h(h-1)}{2}+1$ and $\dfrac {h(h-1)}{2}+2$ are not in $h^{\wedge}A$ and \begin{center}
					$h^{\wedge}A = \left\{\dfrac{h(h-1)}{2}\right\} \bigcup 
					\bigg[\dfrac{h(h-1)}{2}+3, (h+1)k - \dfrac{(h-2)(h-1)}{2}-z+3\bigg].$
				\end{center}
				Therefore,
					$|h^{\wedge}A|=	(h+1)k-h(h-2)-z+1.$
			\end{enumerate}
			
			\item[\upshape(vi)] 
			\begin{enumerate}
				\item[\upshape(a)]
				If $x \in [h+1,k-h-1]$ and $z \in [h+4,k-h+1]$, then clearly, 
				\begin{center}
					$h^{\wedge}A = \left[ \dfrac{h(h-1)}{2}, hk - \dfrac{h(h-5)}{2}\right].$ 
				\end{center}
				Therefore, $|h^{\wedge}A|=	hk-h^2+3h+1.$
				
				\item [\upshape(b)] If $x \in [h+1,k-h-1]$ and $z=k-h+2$, then $A=[0,x-1] \cup [x+1,k-h+1] \cup [k-h+3,k+2]$. Observe that $hk - \dfrac{h(h-5)}{2}-1 \notin h^{\wedge}A$. Evidently, 
				\begin{center}
					$h^{\wedge}A = \left[ \dfrac{h(h-1)}{2}, hk - \dfrac{h(h-5)}{2}-2\right] \bigcup \left\{hk - \dfrac{h(h-5)}{2}\right\}.$ 
				\end{center}
				Therefore, we have $|h^{\wedge}A|=	hk-h^2+3h.$
			
			\end{enumerate}
			\item[\upshape (vii)] 
			\begin{enumerate}
				\item[\upshape(a)]
				If $x \in [h+1,k-h-1]$ and $z \in [k-h+3,k]$, then $A=[0,x-1] \cup [x+2,z-1] \cup [z+1,k+2]$. Clearly, 
				\begin{center}
					$h^{\wedge}A =
					\bigg[\dfrac{h(h-1)}{2}, (h+1)k - \dfrac{(h-2)(h-1)}{2}-z+3\bigg].$
				\end{center} 
				Therefore, we have 
					$|h^{\wedge}A|=	(h+1)k-h(h-2)-z+3.$
				\item [\upshape(b)]
				If $x =k-h$ and $z \in [k-h+3,k]$, then $A=[0,k-h-1] \cup [k-h+2,z-1] \cup [z+1,k+2]$. Observe that, 
				$(h+1)k - \dfrac{h(h-3)}{2}-z$ and $(h+1)k- \dfrac{h(h-3)}{2}-z+1$ do not belong in $h^{\wedge}A$, and
				\begin{center}
					$h^{\wedge}A =
					\bigg[\dfrac{h(h-1)}{2}, (h+1)k - \dfrac{h(h-3)}{2}-z-1\bigg] \bigcup \left\{(h+1)k - \dfrac{h(h-3)}{2}-z+2\right\}.$
				\end{center}
				Therefore,
					$|h^{\wedge}A|=	(h+1)k-h(h-2)-z+1.$
			\end{enumerate}
			
			\item[\upshape (viii)] If $ x \in [k-h+1, k-3]$ and $ z \in [k-h+4,k] $, then  \begin{center}
				$h^{\wedge}A =
				\left[\dfrac{h(h-1)}{2}, (h+3)k - \dfrac{h(h+1)}{2} -(2x+z) +2\right].$
			\end{center}
			Therefore,
			$|h^{\wedge}A| = (h+3)k-h^2-(2x+z)+3.$
		
		\end{enumerate}
	\end{proof}
	
	\begin{prop}
		Let $h \geq 4$ and $k \geq 3h+4$ be two integers. Let $$A=[0,k+2] \setminus \{x,y,y+1\}, ~ \text{where} ~ 2 \leq x<y\leq k-1 ~ \text{with} ~ y-x \geq 2.$$
		\begin{enumerate}
			\item [\upshape(i)] If $x \in [k-h+1,k-3]$ and $y \in [k-h+3,k-1]$, then $|h^{\wedge}A|= (h+3)k-h^2-(x+2y)+3.$ 
			\item [\upshape (ii)] If $x \in [h+1,k-h]$ and $y \in [k-h+2,k-1]$, then $$|h^{\wedge}A|=
			\begin{cases}
				(h+2)k-h(h-1)-2y+2, ~ & ~ \text{if} ~ x=k-h, y=k-h+2,~ \text{for} ~ h \geq 5 ~ \text{and} \\
				 ~ & ~ x \in \{k-h-1,k-h\},y=k-h+2, ~ \text{for}~ h=4;\\
				(h+2)k-h(h-1)-2y+3, ~ & ~ \text{otherwise}.
			\end{cases} $$ 
			\item[\upshape(iii)] If $x \in [h+1,k-h-1]$ and $y=k-h+1$, then $|h^{\wedge}A|= hk-h(h-3)-1.$
			\item[\upshape(iv)] If $x=h$ and $y \in [k-h+1,k-1]$, then \[\left|h^{\wedge}A\right| = 
			\begin{cases}
				(h+2)k-h(h-1)-2y+2, & ~ \text{if} ~ y \in [k-h+2,k-1]; \\
				hk-h(h-3)-2,   & ~ \text{if} ~ y=k-h+1.
			\end{cases}
			\]
			
			\item[\upshape (v)] If $ x \in [2,h-1]$ and $y \in [k-h+1,k-1]$, then
			\[\left|h^{\wedge}A\right| = 
			\begin{cases}
				(h+2)k -h^2 + x -2y +3, & ~ \text{if} ~ y \in [k-h+2,k-1]; \\
				hk -h(h-2) +x -1,   & ~ \text{if} ~ y=k-h+1.
			\end{cases}
			\]
			
			\item[\upshape (vi)] If $ x \in [h,k-h-2]$ and $y \in [h+2,k-h]$, then 
			\[\left|h^{\wedge}A\right| = 
			\begin{cases}
				hk-h^2+3h+1, & ~ \text{if} ~ x \in [h+1,k-h-2]; \\
				hk-h^2+3h,   & ~ \text{if} ~ x = h.
			\end{cases}
			\]
			\item[\upshape(vii)] If $ x \in [2,h-1]$ and $y \in [h+1,k-h]$, then
			\[\left|h^{\wedge}A\right| = 
			\begin{cases}
				hk-h(h-2)+x+1, & ~ \text{if} ~ y \in [h+2,k-h]; \\
				hk-h(h-2)+x-1,   & ~ \text{if} ~ y = h+1.
			\end{cases}
			\]
			\item[\upshape(viii)] If $ x \in [2,h-2]$ and $y \in [4,h]$, then $|h^{\wedge}A|= hk-h^2+x+2y-1.$
		\end{enumerate}
		
	\end{prop}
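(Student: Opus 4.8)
The plan is to deduce the whole proposition from Proposition \ref{prop 3.13} by the reflection $A \mapsto (k+2)-A$, exactly in the spirit of the earlier ``mirror'' reductions (e.g. the deduction of Proposition \ref{prop3.10-2} from Proposition \ref{prop3.10-1}). Since $x,y,y+1 \le k$, the set $A=[0,k+2]\setminus\{x,y,y+1\}$ has $\min A = 0$ and $\max A = k+2$, so applying $a \mapsto (k+2)-a$ gives
\[
(k+2)-A \;=\; [0,k+2]\setminus\{\,k+1-y,\ k+2-y,\ k+2-x\,\},
\]
and the three deleted elements form a block $\{X,X+1,Z\}$ with $X=k+1-y$ and $Z=k+2-x$. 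First I would verify the hypotheses of Proposition \ref{prop 3.13} for this reflected set: from $2\le x<y\le k-1$ and $y-x\ge 2$ one gets $X\ge 2$, $Z\le k$, $X<Z$, and $Z-X=(y-x)+1\ge 3$, i.e. $2\le X<Z\le k$ with $Z-X\ge 3$. Because $|h^{\wedge}A|$ is translation and dilation invariant, $|h^{\wedge}A|=|h^{\wedge}((k+2)-A)|$, so every value of $|h^{\wedge}A|$ is obtained by reading off Proposition \ref{prop 3.13} at $(X,Z)=(k+1-y,\,k+2-x)$.

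The second step is purely a change of variables: I would check that the eight regions of the present proposition match, diagonally, the eight regions of Proposition \ref{prop 3.13}, and then substitute $X=k+1-y$, $Z=k+2-x$ and simplify. For instance, $x\in[k-h+1,k-3]$, $y\in[k-h+3,k-1]$ becomes $X\in[2,h-2]$, $Z\in[5,h+1]$, which is case (i) there; its value $hk-h^2+(2X+Z)-1$ turns, via $2X+Z=3k+4-2y-x$, into $(h+3)k-h^2-(x+2y)+3$, as claimed. Similarly $x=h$, $y\in[k-h+1,k-1]$ maps to $X\in[2,h]$, $Z=k-h+2$ (case (iv)), and at the other end $x\in[2,h-2]$, $y\in[4,h]$ maps to $X\in[k-h+1,k-3]$, $Z\in[k-h+4,k]$ (case (viii)), each substitution reproducing the stated formula.

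The only delicate point, and the main obstacle, is the bookkeeping inside the piecewise cases, which originate from the boundary strata $X\in\{h-1,h\}$ and $Z\in\{h+2,\,k-h+2\}$ of Proposition \ref{prop 3.13}, where $h^{\wedge}A$ is no longer a full interval but carries one or two isolated points or interior gaps. Here I would translate each sub-condition back through the dictionary $X=h\Leftrightarrow y=k-h+1$, $X=h-1\Leftrightarrow y=k-h+2$, $Z=h+2\Leftrightarrow x=k-h$, $Z=k-h+2\Leftrightarrow x=h$, and confirm that the exceptional $(x,y)$-loci together with their corrected cardinalities reproduce the split formulas in cases (ii)--(vii). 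Particular care is needed in case (ii), where the two nearly equal values $(h+2)k-h(h-1)-2y+2$ and $(h+2)k-h(h-1)-2y+3$ must be assigned to exactly the right sub-regions arising from the $Z=h+2$ and $X=h-1$ branches of Proposition \ref{prop 3.13}(ii); the remaining cases are then immediate.
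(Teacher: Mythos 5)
Your reduction via $a \mapsto (k+2)-a$ to Proposition \ref{prop 3.13} is exactly the paper's own proof (the paper's entire argument is the single sentence that $|h^{\wedge}A|$ is invariant under this reflection, so the result ``follows from Proposition \ref{prop 3.13}''). Your dictionary $X=k+1-y$, $Z=k+2-x$ is the right one, the hypotheses $2\le X<Z\le k$, $Z-X=y-x+1\ge 3$ do hold, and for cases (i) and (iii)--(viii) the region-matching and substitution work out exactly as you indicate.

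However, the one step you defer -- ``confirm that the exceptional $(x,y)$-loci \ldots reproduce the split formulas'' in case (ii) -- is precisely the step that fails, so this is a genuine gap (one shared by the paper's one-line proof). A faithful translation of Proposition \ref{prop 3.13}(ii) gives the value $(h+2)k-h(h-1)-2y+2$ exactly when $Z=h+2$, i.e.\ $x=k-h$ with \emph{any} $y\in[k-h+2,k-1]$ (first sub-case), or when $X=h-1$ and $Z\in\{h+3,h+4\}$, i.e.\ $y=k-h+2$ and $x\in\{k-h-2,k-h-1\}$ (third sub-case, whose constant value $hk-h(h-3)-2$ equals $(h+2)k-h(h-1)-2y+2$ at $y=k-h+2$); there is no dependence on $h=4$ versus $h\ge 5$. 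This is not the condition printed in case (ii) of the proposition you were asked to prove. The conflict is real, not a matter of rewriting: take $h=4$, $k=16$, $(x,y)=(k-h,k-1)=(12,15)$, so $A=[0,18]\setminus\{12,15,16\}$. Its reflection is $[0,18]\setminus\{2,3,6\}$, whose restricted $4$-fold sumset is $\{10\}\cup[12,66]$, hence $|4^{\wedge}A|=56=(h+2)k-h(h-1)-2y+2$; but case (ii) as stated puts $(12,15)$ in the ``otherwise'' branch and predicts $57$. So either Proposition \ref{prop 3.13}(ii) or case (ii) here is misstated (direct computation sides with Proposition \ref{prop 3.13}), and in either event your plan, carried out honestly, proves a statement whose case (ii) differs from the one in front of you; asserting agreement instead of checking it papers over exactly the point where the claimed result breaks down.
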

	
	Since $|h^{\wedge}A|$ is translation invariant, the proof follows from Proposition \ref{prop 3.13}.

	\begin{prop}
		Let $h \geq 4$ and $k \geq 3h+4$ be positive integers. Let $$A=[0,k+2] \setminus \{x,y,z\}, ~ \text{where} ~ x,y,z\in [2,k] ~ \text{with} ~ y-x \geq 2 ~ \text{and} ~ z-y \geq 2.$$
		\begin{enumerate}
			\item[ \upshape (i)] If $2 \leq x<y<z \leq h+1$, then $|h^{\wedge}A|= hk-h^2+(x+y+z)-2.$ 
			\item [ \upshape (ii)] If $k-h+1 \leq x<y<z \leq k$, then $|h^{\wedge}A|=(h+3)k -h^2-(x+y+z)+4.$
			
			\item[ \upshape (iii)] If $x \in [2,h-1]$, $y \in [4,h+1]$, and $ z \in [k-(h-3),k]$, then \[|h^{\wedge}A|= 
			\begin{cases}
				(h+1)k-h^2+(x+y-z)+2, & ~ \text{if} ~ x \in [2,h-2], ~ y \in [4,h+1]; \\
				(h+1)k -h(h-2) -z +1, & ~ \text{if} ~ x=h-1, ~ y=h+1.
			\end{cases}
			\]
			\item [\upshape (iv) ]  If $x \in [2,h-2]$, $y \in [4,h]$ and $ z \in [h+2,k-(h-2)]$, then 
			\[\left|h^{\wedge}A\right| = 
			\begin{cases}
				hk-h(h-1)+x+y, & ~ \text{if} ~ z \in [h+2,k-(h-1)]; \\
				hk-h(h-1)+x+y-1,   & ~ \text{if} ~ z = k-(h-2).
			\end{cases}
			\]

			\item[ \upshape (v)] If  $x \in [2,h-1]$, $y \in [k-(h-2),k-2]$  and $z \in [k-(h-4), k]$, then $|h^{\wedge}A|=(h+2)k-h^2 +(x-y-z) +4. $
			
			\item[ \upshape (vi)] If $x \in [2,h-1]$, $y\in [h+1, k-(h-1)]$, $ z \in [k-(h-3),k]$, then \[|h^{\wedge}A|= 
			\begin{cases}
				(h+1)k-h(h-1)+(x-z)+3, & ~ \text{if} ~ y \in [h+2,k-h];\\
				
				(h+1)k-h(h-1)+(x-z)+2, ~ & ~ \text{if} ~ y= h+1 ~ \text{or} ~ k-(h-1). 
			\end{cases}	\] 
			
			\item [ \upshape (vii)] If $x \in [2,h-1]$, $y\in [h+1,k-h]$, and $z=k-(h-2)$, then 
			\[|h^{\wedge}A|=
			\begin{cases}
				hk-h(h-2)+x-1, ~ & ~ \text{if} ~ y=h+1;\\
				hk-h(h-2)+x, ~ & ~ \text{if} ~ y \in [h+2,k-h].
			\end{cases} \]
			
			\item[ \upshape (viii)] If $x \in [2,h-1]$, $y \in [h+1,k-h]$, and $ z \in [h+3,k-(h-1)]$, then \[|h^{\wedge}A|=	
			\begin{cases}
				hk-h(h-3)-2, & ~  \text{if} ~ \{x,y,z\}=\{h-1, h+1,h+3\};\\
				hk-h(h-3)+1,  & ~  \text{if} ~ \{x,y,z\}=\{h-1, h+1, k-(h-1)\};\\
				hk-h(h-2)+x, & ~  \text{if} ~ x \in [2,h-1], ~ y=h+1, ~ z \in [h+4,k-(h-1)];\\
				hk-h(h-2)+x+1, & ~ \text{if} ~ x \in [2,h-1], ~ y \in [h+2,k-h-2], ~ z \in [h+4,k-(h-1)].
			\end{cases}\]
			
			\item [\upshape (ix)] If $x=h$, $y \in [h+2,k-h]$ and $z \in [h+4,k]$, then \[|h^{\wedge}A|=
			\begin{cases}
				(h+1)k-h(h-2)-z+2, ~ & ~ \text{if} ~ z \in [k-(h-3),k];\\
				hk-h(h-3)-1, ~ & ~ \text{if} ~ z=k-(h-2);\\
				hk-h(h-3), ~ & ~ \text{if} ~ z \in [h+4,k-(h-1)]
			\end{cases}
			\]
			\item [\upshape (x)] If $x=h$, $y \in [k-(h-1),k-2]$, and $z \in [k-(h-3),k]$, then \[|h^{\wedge}A|= 
			\begin{cases}
				hk-h(h-3)-2, ~ & ~ \text{if} ~ \{x,y,z\}=\{h,k-(h-1),k-(h-3)\};\\
				(h+1)k-h(h-2)-z+1, ~ & ~ \text{if} ~ x=h, ~ y=k-(h-1), ~ z \in [k-(h-4),k];\\
				hk-h(h-3)-3, ~ & ~ \text{if} ~ \{x,y,z\}=\{h,k-(h-2),k-(h-4)\};\\
				(h+1)k-h(h-2)-z+1, ~ & ~ \text{if} ~ x=h, ~ y=k-(h-2), ~ z \in [k-(h-5),k];\\
				(h+2)k -h(h-1)-(y+z) +3, ~ & ~ \text{if} ~ x=h, ~ y\in [k-(h-3),k-2], ~ z \in [k-(h-5),k].
			\end{cases}\]


			\item[\upshape(xi)] If $h+1 \leq x<y \leq k-(h-1)$ and $ z \in [k-(h-3),k]$, then \[|h^{\wedge}A|=
			\begin{cases}
				hk-h(h-3)-2, ~ & ~ \text{if} ~ \{x,y,z\}= \{k-(h+1), k-(h-1), k-(h-3)\};\\
				hk-h(h-3)-1, ~ & ~ \text{if} ~ x \in [h+1,k-(h+2)],~ y=k-(h-1),~ z=k-(h-3);\\
				(h+1)k -h(h-2) -z+3, ~ & ~ \text{otherwise}.
			\end{cases}
			\]
			\item [\upshape (xii)] If $x \in [h+1,k-h]$ and $y,z \in [k-(h-2),k]$, then \[|h^{\wedge}A|=
			\begin{cases}
				(h+2)k-h(h-1)-(y+z)+3, ~ & ~ \text{if} ~ x=k-h;\\
				(h+2)k-h(h-1)-(y+z)+4, ~ & ~ \text{otherwise}.
			\end{cases}
			\]

			\item[ \upshape (xiii)] If $x,y,z \in [h+1,k-(h-2)]$, then  \[|h^{\wedge}A|=
			\begin{cases}
				hk-h(h-3),  ~ & ~ \text{if} ~ z=k-(h-2);\\
				hk-h(h-3)+1, ~ & ~ \text{otherwise}. 
			\end{cases}
			\]

		\end{enumerate}
	\end{prop}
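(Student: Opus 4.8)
The plan is to proceed exactly as in the preceding propositions, by writing $A$ explicitly as a union of intervals and identifying $h^{\wedge}A$ as an interval that is, at worst, punctured by a bounded number of points near its two endpoints. Since $A=[0,k+2]\setminus\{x,y,z\}$ is the full interval $[0,k+2]$ punctured at three points that are pairwise separated by gaps of at least $2$, and since $k\geq 3h+4$ forces $A$ to contain consecutive runs far longer than $h$, this description will hold. First I would record the two extreme sums: the least element of $h^{\wedge}A$ is $\mu$, the sum of the $h$ smallest elements of $A$, and the greatest is $M$, the sum of the $h$ largest elements of $A$; both are always attained. The quantity $\mu$ depends only on how the deletions crowd $0$ (i.e.\ on how many of $x,y,z$ fall among the bottom $h$ positions), and $M$ depends only on how they crowd $k+2$. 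For example, when all three deletions are interior, $x\geq h+1$ and $z\leq k-h+2$, one gets $\mu=\tfrac{h(h-1)}{2}$ and $M=h(k+2)-\tfrac{h(h-1)}{2}$, giving the generic full-interval count $hk-h^2+3h+1=hk-h(h-3)+1$; when all three sit in the bottom band (case (i)) one instead gets $\mu=\tfrac{(h+2)(h+3)}{2}-(x+y+z)$, and the full interval $[\mu,M]$ then has exactly $hk-h^2+(x+y+z)-2$ elements.

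Next I would establish that the interior of $h^{\wedge}A$ is full: every integer strictly between $\mu$ and $M$, apart from finitely many within bounded distance of the endpoints, is realized. This is the routine part and rests on the long consecutive runs in $A$: given a representation of a sum $s$ with $\mu<s<M$ that uses an element with room to move inside such a run, one increases that element by $1$ to reach $s+1$. Hence all genuine content is concentrated at the two ends, and the holes there are determined purely locally by how tightly $x$ approaches $0$ and $z$ approaches $k+2$. The canonical mechanism is already visible in case (xiii): when $z=k-h+2$ exactly, the only way to descend from $M$ by $1$ would be to replace $k-h+3$ by $k-h+2=z$, which is deleted, so $M-1$ is a single hole and the count drops to $hk-h(h-3)$. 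This is the source of every $-1$, $-2$, or $-3$ correction and of all the piecewise answers.

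With these two ingredients I would halve the casework using the reflection $h^{\wedge}A=h(k+2)-h^{\wedge}\bigl((k+2)-A\bigr)$, which sends a deletion pattern near the top to its mirror near the bottom and preserves $|h^{\wedge}A|$; this is the same device used throughout the earlier propositions, and one checks directly that it maps case (ii) to case (i), and so on. The thirteen cases are then the combinations obtained by placing each of $x,y,z$ in one of the regimes ``bottom band'', ``deep interior'', ``top band'', together with the handful of critical boundary positions ($x\in\{h-1,h\}$, $y\in\{h+1,k-h+1\}$, $z$ near $k-h+2$) at which a single hole appears or disappears. In each case one writes $A$ as an explicit union of intervals, reads off $\mu$ and $M$, determines the at-most-three missing points near each endpoint, and sums, exactly as in Proposition~\ref{prop 3.13} and its predecessors.

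The main obstacle is the boundary bookkeeping in the genuinely two-sided cases (for instance (iii), (v), (vi)) and the clustered cases (for instance (x), (xi), (xii)), where holes may occur at both ends, or where two deletions lie close together within a single band. In the two-sided cases the hole patterns at the two ends are independent and must be counted separately and then combined, while in the clustered cases the generic ``one swap'' analysis breaks down: the anomalous configurations $\{h-1,h+1,h+3\}$, $\{h,k-h+1,k-h+3\}$, and $\{h,k-h+2,k-h+4\}$ produce the exceptional values $hk-h(h-3)-2$ and $hk-h(h-3)-3$ precisely because two nearby deletions shift $\mu$ (or $M$) and simultaneously obstruct the first one or two steps inward, so the near-extreme sums must be checked one integer at a time rather than read off from the generic interval description.
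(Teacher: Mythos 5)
Your proposal is correct and takes essentially the same route as the paper: in each of the thirteen cases the paper likewise writes $A$ as an explicit union of intervals, reads off the extreme sums, records the few missing values near the two ends (including exactly the clustered exceptional configurations you single out, such as $\{h-1,h+1,h+3\}$ and $\{h,k-(h-2),k-(h-4)\}$), and invokes translation--dilation invariance under $A \mapsto (k+2)-A$ to reduce mirror cases, e.g.\ (ii) to (i). The only difference is presentational: you state the generic $\mu$/$M$/interior-filling mechanism once up front, whereas the paper simply asserts the explicit form of $h^{\wedge}A$ case by case.
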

	\begin{proof}
		\begin{enumerate}
			\item [\upshape(i).]
			
			If $x,y,z \in [2, h+1]$, then $A=[0,x-1]\cup [x+1,y-1] \cup [y+1,z-1] \cup [z+1,k+2]$. It is easy to see that \begin{center}
				$h^{\wedge}A =
				\bigg[\dfrac{(h+2)(h+3)}{2}-(x+y+z), hk - \dfrac{h(h-5)}{2}\bigg].$
			\end{center} 
			Therefore, we have	$|h^{\wedge}A|=	hk-h^2+(x+y+z)-2.$
		
			\item [\upshape(ii).] If $ x,y,z \in [k-h+1,k]$, then we have  \begin{align*}
				A &=[0,x-1]\cup [x+1,y-1] \cup [y+1,z-1] \cup [z+1,k+2] \\&
				=(k+2)- ([0,x'-1] \cup [x'+1,y'-1] \cup [y'+1,z'-1]\cup [z'+1,k+2]), 
			\end{align*} where $x',y',z' \in [2,h+2].$ Since cardinality of $h^{\wedge}A$ is translation invariant, we have 
			\begin{center}$|h^{\wedge}A| = |h^{\wedge}([0,x'-1] \cup [x'+1,y'-1] \cup [y'+1,z'-1]\cup [z'+1,k+2]),$
			\end{center}
			where $x',y',z' \in [2,h+2].$ Therefore, in this case, we have 
			\begin{center}
				$|h^{\wedge}A| =hk-h^2-2+\{3k+6-(x+y+z)\} =(h+3)k-h^2-(x+y+z)+4.$
			\end{center}

			\item 	[\upshape(iii).] 
			\begin{enumerate}
				\item [\upshape (a)]
				If $x \in [2,h-2]$, $y \in [4,h]$, and $z \in [k-(h-3),k]$,  then \begin{center}
					$A=[0,x-1] \cup [x+1,y-1] \cup [y+1,z-1] \cup [z+1,k+2].$
				\end{center}  So, it is easy to see that  \begin{center}$h^{\wedge}A =\bigg[ \dfrac{(h+1)(h+2)}{2}-(x+y), (h+1)k +3-\dfrac{(h-1)(h-2)}{2}-z\bigg].$
				\end{center}
				Therefore, we have
					$|h^{\wedge}A|= (h+1)k-h^2+(x+y-z)+2$.
			\item [\upshape (b)]
				If $x=h-1$, $y=h+1$ and $z \in [k-(h-3),k]$,  then \begin{center}
					$A=[0,h-2] \cup \{h\} \cup [h+2,z-1] \cup [z+1,k+2].$
				\end{center}
				Observe that,  $\dfrac{h(h-1)}{2}+2$ does not belong to $h^{\wedge}A$. Clearly, 
				\begin{center}
					$h^{\wedge}A=\left\{\dfrac{h(h-1)}{2}+1\right\} \bigcup \left[\dfrac{h(h-1)}{2}+3, (h+1)k +3-\dfrac{(h-1)(h-2)}{2}-z  \right].$
				\end{center}
				Therefore, $|h^{\wedge}A|=(h+1)k-h(h-2)-z+1.$ 
			\end{enumerate}
			\item 	[\upshape(iv).]
			\begin{enumerate}
				\item [\upshape (a) ]
				If $x \in [2,h-2]$, $y \in [4,h]$ and $z \in [h+2, k-(h-1)]$, then 
				\begin{center}
					$A=[0,x-1] \cup [x+1,y-1] \cup [y+1,z-1] \cup [z+1,k+2].$
				\end{center}
				Clearly, 
				\begin{center}
		$h^{\wedge}A= \bigg[\dfrac{h(h+3)}{2}-(x+y)+1, hk- \dfrac{h(h-5)}{2}\bigg].$
			\end{center}
				Therefore, $|h^{\wedge}A|=hk-h(h-1)+x+y.$
				
				\item [\upshape(b) ] If $x \in [2,h-2]$, $y \in [4,h]$ and $z=k-(h-2)$, then 
				\begin{center}
					$A=[0,x-1] \cup [x+1,y-1] \cup [y+1,k-(h-1)] \cup [k-(h-3),k+2].$
				\end{center}
				Observe that, $hk- \dfrac{h(h-5)}{2}-1$ does not belong to $h^{\wedge}A$. Clearly, 
				\begin{center}
				$h^{\wedge}A= \bigg[\dfrac{h(h+3)}{2}-(x+y)+1, hk- \dfrac{h(h-5)}{2}-2\bigg] \bigcup \bigg\{ hk- \dfrac{h(h-5)}{2} \bigg\}. $
				\end{center}
				Therefore, $|h^{\wedge}A|=hk-h(h-1)+x+y-1.$	
				
			\end{enumerate}
			\item [\upshape(v).] If $x\in [2,h-1]$, $y \in [k-(h-2),k-2]$, and $y \in [k-(h-3),k]$, then we have 
			\begin{center}
				$	A =[0,x-1] \cup [x+1,y-1] \cup [y+1,z-1] \cup [z+1,k+2].$
			\end{center}
			Clearly, \begin{center}
				$	h^{\wedge}A	=\left[\dfrac{h(h+1)}{2}-x, (h+2)k - \dfrac{h(h-1)}{2} - (y+z)+3\right]$
			\end{center} 
			Therefore,
				$|h^{\wedge}A|= (h+2)k - h^2+(x-y-z)+4.$
			\item [\upshape(vi).]
			\begin{enumerate}
				\item [\upshape (a) ] If $x \in [2,h-1]$, $y\in [h+2, k-h]$, and $z \in [k-(h-3),k]$, then 
				\begin{center}
					$A=[0,x-1]\cup [x+1,y-1] \cup [y+1,z-1] \cup [z+1,k+2].$
				\end{center}
				Clearly, $$h^{\wedge}A= \left[ \dfrac{h(h+1)}{2}-x, (h+1)k +3 - \dfrac{(h-1)(h-2)}{2}-z\right].$$
				Therefore $|h^{\wedge}A|=(h+1)k-h(h-1)+(x-z)+3.$
				\item [\upshape (b)]
				\begin{enumerate}
					\item
					If $x \in [2,h-1]$, $y=h+1$ and $z \in [k-(h-3),k]$, then 
					\begin{center}
						$A=[0,x-1]\cup [x+1,h] \cup [h+2,z-1] \cup [z+1,k+2].$
				\end{center} Observe that $\dfrac{h(h+1)}{2}-x+1$ does not belong to $h^{\wedge}A$. It is easy to see that \begin{multline*} h^{\wedge}A= \left \{ \dfrac{h(h+1)}{2}-x \right\} \bigcup \bigg[\dfrac{h(h+1)}{2}-x+2, (h+1)k +3 - \dfrac{(h-1)(h-2)}{2}-z\bigg] .
					\end{multline*}

					\item	If $x \in [2,h-1]$, $y=k-(h-1)$ and $z \in [k-(h-3),k]$, then 
					\begin{center}
						$A=[0,x-1]\cup [x+1,k-h] \cup [k-h+2,z-1] \cup [z+1,k+2].$
					\end{center} Note that $(h+1)k -\dfrac{(h-1)(h-2)}{2}-z+2$ does not belong to $h^{\wedge}A$. Clearly, \begin{multline*} 
						h^{\wedge}A=  \left[\dfrac{h(h+1)}{2}-x, (h+1)k - \dfrac{(h-1)(h-2)}{2}-z+1\right]\\ \bigcup  \left\{(h+1)k - \dfrac{(h-1)(h-2)}{2}-z+3\right\} .
					\end{multline*} 
				\end{enumerate}
				
				Therefore, in both the cases, we have \begin{align*}
					|h^{\wedge}A|&=hk-h^2+k+2+(x+h-z)\\&=(h+1)k-h(h-1)+(x-z)+2.
				\end{align*}
			\end{enumerate}
			
			\item [\upshape(vii).] We take $z=k-(h-2)$, then regardless of the choices of $x$ and $y$, $hk- \dfrac{(h-2)(h-3)}{2}+2$ does not belong to $ h^{\wedge}A$.
			\begin{enumerate}
				\item [\upshape (a)]
				If $x \in [2,h-1]$ and $y=h+1$, then
				$	A =[0,x-1]\cup [x+1,h] \cup [h,k-(h-1)] \cup [k-(h-3),k+2].$	Note that 
				$\dfrac{h(h+1)}{2}-x+1$ is not contained in $ h^{\wedge}A$ as well.
				Evidently, \begin{multline*}
					h^{\wedge}A =  \left\{\dfrac{h(h+1)}{2}-x\right\}  \bigcup \left[\dfrac{h(h+1)}{2}-x+2, hk- \dfrac{(h-2)(h-3)}{2}+1\right] \\ \hspace{2.5cm}\bigcup  \left\{hk- \dfrac{(h-2)(h-3)}{2}+3\right\}.
				\end{multline*}
				Therefore, we have   $|h^{\wedge}A|=hk-h^2+2h+x-1.$
				
				\item [\upshape (b)] If $x \in [2,h-1]$ and $y \in [h+2, k-h]$, then 
				$	A =[0,x-1]\cup [x+1,y-1] \cup [y+1,k-(h-1)] \cup [k-(h-3),k+2]$ and 
				 \begin{center}$h^{\wedge}A= \left[\dfrac{h(h+1)}{2}-x,hk- \dfrac{(h-2)(h-3)}{2}+1 \right] \bigcup \left\{hk- \dfrac{(h-2)(h-3)}{2}+3\right\}.$
				\end{center} Therefore, $|h^{\wedge}A|=hk-h(h-2)+x. $
			\end{enumerate}
			\item[\upshape (viii). ]	In the first three cases, when $y=h+1$, no matter which $x$ is chosen, $\dfrac{h(h+1)}{2}-x+1$ does not belong to $h^{\wedge}A$.
			\begin{enumerate}
				\item [\upshape (a) ] If $x=h-1$ and $z=h+3$, then $A=[0,h-2] \cup \{h,h+2\} \cup [h+4, k+2]$. Observe that $\dfrac{h(h-1)}{2}+4$ does not belong to $h^{\wedge}A$. Clearly, 
				\begin{center}
					$h^{\wedge}A=\left\{\dfrac{h(h-1)}{2}+1,\dfrac{h(h-1)}{2}+3\right\} \bigcup \left[\dfrac{h(h-1)}{2}+5, hk- \dfrac{h(h-5)}{2} \right].$
				\end{center}
				Therefore, $|h^{\wedge}A|= hk-h(h-3)-2.$
				\item [\upshape (b) ] If $x=h-1$ and $z=k-(h-1)$, then $A=[0,h-2] \cup \{h\} \cup [h+2, k-h] \cup [k-(h-2),k+2]$ and 
				\begin{center}
					$h^{\wedge}A= \left\{\dfrac{h(h-1)}{2}+1\right\} \bigcup \left [ \dfrac{h(h-1)}{2}+3, hk- \dfrac{h(h-5)}{2} \right].$
				\end{center}
				Therefore, $|h^{\wedge}A|= hk-h(h-3)+1. $
				
				\item [\upshape (c) ] If $x \in [2,h-1]$ and $z \in [h+4,k-(h-1)]$, then $A= [0,x-1] \cup [x+1,h] \cup [h+2,z-1] \cup [z+1,k+2]$ and 
				\begin{center}
					$h^{\wedge}A= \left\{\dfrac{h(h+1)}{2}-x\right\} \bigcup \left[\dfrac{h(h+1)}{2}-x+2, hk- \dfrac{h(h-5)}{2} \right]$.
				\end{center}
				Therefore, $|h^{\wedge}A|= hk-h(h-2)+x.$			
				\item [\upshape (d)]  If $x \in [2,h-1]$, $y\in [h+2,k-h-1]$, and $z \in [h+4,k-(h-1)]$, then $A= [0,x-1] \cup [x+1,y-1] \cup [y+1,z-1] \cup [z+1,k+2]$. Clearly, \begin{center}
					$h^{\wedge}A=\left[\dfrac{h(h+1)}{2}-x, hk- \dfrac{h(h-5)}{2} \right].$\end{center}
				Therefore, $|h^{\wedge}A|= hk-h(h-2)+x+1.$	
				
			\end{enumerate}
			\item[\upshape (ix). ]	Here, we take $x=h$ and regardless of the choices of $y$ and $z$, $ \dfrac{h(h+1)}{2}+1$ is not an element of  $h^{\wedge}A$. 
			\begin{enumerate}
				\item [\upshape (a)] If $y \in [h+2,k-(h+1)]$, and $z \in [k-(h-3),k]$, then $A=[0,h-1] \cup [h+1,y-1] \cup [y+1,z-1] \cup [z+1,k+2]$ and 
				\begin{center}
					$h^{\wedge}A=\left\{\dfrac{h(h+1)}{2}\right\} \bigcup \left[\dfrac{h(h+1)}{2}+2, (h+1)k+3- \dfrac{(h-1)(h-2)}{2}-z\right].$
				\end{center}  Therefore, $|h^{\wedge}A|= (h+1)k - h(h-2)-z+2.$
				\item [\upshape (b)] If $y \in [h+2,k-h]$, and $z=k-(h-2)$, then $A=[0,h-1] \cup [h+1,y-1] \cup [y+1,k-(h-1)] \cup [k-(h-3),k+2]$. Note that, $(h+1)k+2- \dfrac{(h-1)(h-2)}{2}-z$ does not belong to $h^{\wedge}A$. Clearly, 
				\begin{multline*}
					h^{\wedge}A = \left\{ \dfrac{h(h+1)}{2}\right\} \bigcup \left[ \dfrac{h(h+1)}{2}+2,(h+1)k+1- \dfrac{(h-1)(h-2)}{2}-z\right] \\ \hspace{2cm} \bigcup \left\{(h+1)k+3- \dfrac{(h-1)(h-2)}{2}-z\right\}.
				\end{multline*}
				Therefore, $|h^{\wedge}A|= hk-h(h-3)-	1. $
				\item  [\upshape (c)]  If $y \in [h+2,k-h]$, and $z=[h-4,k-(h-1)]$, then $A=[0,h-1] \cup [h+1,y-1] \cup [y+1,z-1] \cup [z+1,k+2]$ and \begin{center}$h^{\wedge}A=\left\{\dfrac{h(h+1)}{2}\right\} \bigcup \left[\dfrac{h(h+1)}{2}+2, hk- \dfrac{h(h-5)}{2}\right].$
				\end{center} Therefore, $|h^{\wedge}A|= hk - h(h-3).$
			\end{enumerate}		
			
			\item[\upshape(x). ] As we have mentioned in the earlier case, for $x=h$, $ \dfrac{h(h+1)}{2}+1 \notin h^{\wedge}A$.
			\begin{enumerate}
				\item[\upshape(a)] If $y=k-(h-1)$ and $z=k-(h-3)$, then $A=[0,h-1] \cup [h+1,k-h] \cup \{k-(h-2)\} \cup [k-(h-4),k+2]$. Observe that, $hk-\dfrac{h(h-5)}{2} -2$ does not belong to $h^{\wedge}A$. Evidently,
				\begin{multline*}
					h^{\wedge}A =\left\{\dfrac{h(h+1)}{2}\right\} \bigcup \left[ \dfrac{h(h+1)}{2}+2,hk-\dfrac{h(h-5)}{2} -3 \right]\bigcup \left\{hk- \dfrac{h(h-5)}{2}-1\right\}.
				\end{multline*}
				Therefore, $ |h^{\wedge}A|= hk - h(h-3) -2.$
				\item[\upshape(b)] If $y=k-(h-1)$ and $z \in [k-(h-4),k]$. Note that, $\dfrac{hk-h(h-5)}{2} -2$ does not belong to $h^{\wedge}A$. Clearly, 
				\begin{multline*}
					h^{\wedge}A =\left\{\dfrac{h(h+1)}{2}\right\} \bigcup \left[ \dfrac{h(h+1)}{2}+2,(h+1)k-\dfrac{(h-1)(h-2)}{2} -z+1 \right]\\ \bigcup \left\{(h+1)k- \dfrac{(h-1)(h-2)}{2} -z+3\right\},
				\end{multline*}
				Therefore, $|h^{\wedge}A|= (h+1)k-h(h-2)-z+1.$
				\item[\upshape(c)] If $y=k-(h-2)$ and $z = k-(h-4)$, then $A=[0,h-1] \cup [h+1,k-(h-1)] \cup \{k-(h-3)\} \cup [k-(h-5),k]$. Clearly,
				\begin{center}
					$h^{\wedge}A= \left\{\dfrac{h(h-1)}{2}\right\} \bigcup \left[\dfrac{h(h-1)}{2}+2, hk - \dfrac{h(h-5)}{2}-3\right].$
				\end{center}
				Therefore, $|h^{\wedge}A|=hk-h(h-3)-3.$
				\item[\upshape (d)] If $y=k-(h-2)$ and $z \in [k-(h-5),k]$, then $A=[0,h-1] \cup [h+1,k-(h-1)] \cup [k-(h-3),z-1] \cup [z+1,k+2]$ and  \begin{center}
					$h^{\wedge}A= \left\{\dfrac{h(h-1)}{2}\right\} \bigcup \left[\dfrac{h(h-1)}{2}+2, (h+1)k - \dfrac{h(h-3)}{2}-z+1\right],$
				\end{center}
				Therefore, $|h^{\wedge}A|=(h+1)k-h(h-2)-z+1.$

				\item[\upshape (e)]	If $y \in [k-(h-3),k-2]$ and $z \in [k-(h-5),k]$, then $A=[0,h-1] \cup [h+1,y-1] \cup [y+1,z-1] \cup [z+1,k+2]$. Clearly,
				\begin{center}
					$ h^{\wedge}A= \left\{\dfrac{h(h-1)}{2}\right\} \bigcup \left[\dfrac{h(h-1)}{2} +2, (h+2)k - \dfrac{h(h-1)}{2} - (y+z) +3  \right].$
				\end{center}
				Therefore,  $ |h^{\wedge}A|= (h+2)k - h(h-1) -(y+z) +3.$
				
			\end{enumerate}

			\item [\upshape(xi).] 
			\begin{enumerate}
				\item [\upshape (a)] If $x=k-(h+1)$, $y=k-(h-1)$, and $z=k-(h-3)$, then $A=[0,k-h-2] \cup \{k-h,k-(h-2)\} \cup  [k-(h-4),k+2]$. Note that $hk- \dfrac{h(h-5)}{2} -2$ and $hk- \dfrac{h(h-5)}{2} - 4$ do not belong to $h^{\wedge}A$. Clearly, 
				\begin{multline*}
					h^{\wedge}A= \left[ \dfrac{h(h-1)}{2}, hk- \dfrac{h(h-5)}{2} -5\right] \bigcup \left\{hk- \dfrac{h(h-5)}{2} -3, hk- \dfrac{h(h-5)}{2} -1\right\}.
				\end{multline*}
				Therefore, $|h^{\wedge}A|= hk - h(h-3) -2.$
				\item [\upshape (b)] If $x \in [h+1, k-(h+2)]$, $y=k-(h-1)$, and $z=k-(h-3)$, then $A=[0,x-1] \cup [x+1,k-h] \cup \{k-(h-2)\} \cup [k-(h-4),k+2]$. Note that $hk- \dfrac{h(h-5)}{2} -2$ does not belong to $h^{\wedge}A$. Clearly, \begin{center}
					$h^{\wedge}A= \left[ \dfrac{h(h-1)}{2}, hk- \dfrac{h(h-5)}{2} -3\right] \bigcup \left\{hk- \dfrac{h(h-5)}{2} -1\right\}$.
				\end{center}
				Therefore, $|h^{\wedge}A|= hk - h(h-3) -1.$
				\item[\upshape (c)]
				If $x \in [h+1,k-(h+1)]$, $y \in [h+3,k-(h-1)]$, and $z \in [k-(h-3),k]$, but $y=k-(h-1)$ and $z=k-(h-3)$ do not occur at the same time. Then \begin{align*}
					A &=[0,x-1]\cup [x+1,y-1] \cup [y+1,z-1] \cup [z+1,k+2] \\&
					= (k+2)- [0,x'-1]\cup [x'+1,y'-1] \cup [y'+1,z'-1] \cup [z'+1,k+2],
				\end{align*} where $x' \in [2,h-1]$ and $y',z' \in [h+2,k-h].$  Since cardinality of $h^{\wedge}A$ is translation and dilation invariant, we have   \begin{align*}
					|h^{\wedge}A|&=hk-h^2+2h+(k+2-z)+1\\& =(h+1)k-h(h-2)-z+3.
				\end{align*}
			\end{enumerate}
			\item[\upshape(xii).]
			\begin{enumerate}
				\item [\upshape (a)] If $x=k-h$, $y \in [k-(h-2),k-2]$, and $z \in [k-(h-4),k]$, then $A=[0,k-h-1] \cup [k-h+1,y-1] \cup [y+1,z-1] \cup [z+1,k+2]$. Note that, $(h+2)k- \dfrac{h(h-1)}{2}-(y+z)+2$ does not belong to $h^{\wedge}A$. Clearly, 
				\begin{multline*}
					h^{\wedge}A= \left[ \dfrac{h(h-1)}{2}, (h+2)k - \dfrac{h(h-1)}{2} -(y+z)+1 \right] \bigcup \left\{ (h+2)k - \dfrac{h(h-1)}{2} -(y+z)+3\right\}. 
				\end{multline*}
				Therefore, $|h^{\wedge}A|= (h+2)k -h(h-1) -(y+z) +3.$
				\item [\upshape (b)] If $x\in [h+1,k-(h+1)]$, $y \in [k-(h-2),k-2]$, and $z \in [k-(h-4),k]$, then $A=[0,x-1] \cup [x+1,y-1] \cup [y+1,z-1] \cup [z+1,k+2]$ and 
				\begin{center}
					$h^{\wedge}A= \left[ \dfrac{h(h-1)}{2}, (h+2)k - \dfrac{h(h-1)}{2} -(y+z)+3 \right],$
				\end{center}
				Therefore, $|h^{\wedge}A|= (h+2)k -h(h-1) -(y+z) +4.$ 
			\end{enumerate}

			\item [\upshape(xiii).] 
			\begin{enumerate}
				\item [\upshape (a)] If $x \in [h+1,k-h-2]$, $y \in [h+3,k-h]$, and $z=k-h+2$, then $A= [0,x-1] \cup [x+1,y-1] \cup [y+1,k-h+1] \cup [k-h+3, k+2].$ Observe that, $hk - \dfrac{h(h-5)}{2}-1$ does not belong to $h^{\wedge}A$. Clearly, 
				\begin{center}
					$h^{\wedge}A= \left[ \dfrac{h(h-1)}{2}, hk- \dfrac{h(h-5)}{2}-2\right] \cup \left\{hk- \dfrac{h(h-5)}{2}\right\}.$
				\end{center} 
				Therefore, $|h^{\wedge}A|= hk-h(h-3).$
				\item[\upshape (b)] If $x,y,z \in [h+1,k-(h-1)]$, then $A=[0,x-1] \cup [x+1,y-1] \cup [y+1,z-1] \cup [z+1,k+2]$ and 
				\begin{center}
					$h^{\wedge}A=\bigg[\dfrac{h(h-1)}{2}, hk - \dfrac{h(h-5)}{2}\bigg].$
				\end{center} Therefore, $|h^{\wedge}A|=hk-h^2+3h+1.$
			\end{enumerate}
		\end{enumerate}
	
	\end{proof}

	For $h\geq 4$, the following proposition \ref{prop 3.16} demonstrates the cardinality of $h^{\wedge}A$ for $A=[0,k+2] \setminus \{1,y,z\}, ~ \text{where} ~ 2 \leq y<z\leq k+1$, with $z-y \geq 2$. Since cardinality of $h^{\wedge}A$ is translation and dilation invariant, Proposition \ref{prop3.17} follows from Proposition \ref{prop 3.16}.
	
	\begin{prop}\label{prop 3.16}
		Let $h \geq 4$ and $k \geq 3h+4$ be positive integers. Let $$A=[0,k+2] \setminus \{1,y,z\}, ~ \text{where} ~ 2 \leq y<z\leq k+1 ~ \text{with} ~ y-z \geq 2.$$
		\begin{enumerate}
			\item[ \upshape (i)] If $y \in [2, h]$ and $z \in [4,h+2]$, then 
			\[|h^{\wedge}A|=
			\begin{cases} 
				hk-h^2-1+(y+z), ~ & ~ \text{if} ~ y \in [2,h-1] ~ \text{and} ~ z \in [4,h+2];\\
				hk-h(h-2), ~ & ~ \text{if} ~ y=h ~ \text{and} ~ z=h+2.
			\end{cases}\]
			\item[ \upshape (ii)] If $y \in [2,h]$ and $ z \in [h+3, k+1] $ , then \[|h^{\wedge}A|=
			\begin{cases}
				hk-h(h-1)+y+1, ~ & ~ \text{if} ~ z \in [h+3, k-(h-1)];\\
				hk-h(h-1)+y, ~ & ~ \text{if} ~ z=k-(h-2);\\
				
				hk-h^2+k+3+(y-z), ~ & ~ \text{if} ~ z \in [k-(h-3),k+1].
			\end{cases}\]
			\item[ \upshape (iii)] If $y=h+1$ and $ z \in [k-(h-2),k+1]$, then \[|h^{\wedge}A|=
			\begin{cases}
				(h+1)k-h(h-1)-z+3, ~ & ~ \text{if} ~ z \in [k-(h-3),k+1];\\
				hk-h(h-2), ~ & ~ \text{if} ~ z=k-(h-2).
			\end{cases}
			\]
			\item[\upshape (iv)] If $y=k-h$ and $z= k-(h-2)$, then $|h^{\wedge}A|= hk-h(h-2)+1.$
			\item[ \upshape (v)] If  $y \in [h+2,k-(h-1)] $ and $ z \in [k-(h-3),k+1]$, then \[|h^{\wedge}A|=
			\begin{cases}
				(h+1)k-h(h-1)-z+4, ~  & ~ \text{if} ~ y \in [h+2,k-h];\\
				(h+1)k-h(h-1)-z+3, ~  & ~ \text{if} ~ y=k-(h-1).
			\end{cases}
			\]
			\item[\upshape(vi)] If $y \in [k-(h-2),k-1]$ and $z \in [k-(h-4),k+1]$, then \[|h^{\wedge}A|= (h+2)k-h^2-(y+z)+5.\]
			\item[ \upshape (vii)] If $y \in [h+1, k-(h+1)]$ and $z \in [h+3,k-(h-1)]$, then  $$|h^{\wedge}A|=
			\begin{cases}
				hk-h^2+2h+1, ~ & ~ \text{if} ~ y=h+1;\\
				hk-h^2+2h+2, ~ & ~ \text{if} ~ y \in [h+2,k-(h+1)].
			\end{cases} .$$

		\end{enumerate}
	\end{prop}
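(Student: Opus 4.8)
The plan is to establish all seven cases by the same three-step template used in the preceding propositions. First I would write $A=[0,k+2]\setminus\{1,y,z\}$ explicitly as a union of runs of consecutive integers; since $k\geq 3h+4$, deleting the three points $\{1,y,z\}$ always leaves a run far longer than $h$. Second, I would identify $h^{\wedge}A$ as one block of consecutive integers together with at most a couple of exceptional points lying just above $\min h^{\wedge}A$ or just below $\max h^{\wedge}A$. Third, I would read off $|h^{\wedge}A|$ as the length of that block corrected by the number of exceptional points. The backbone is the elementary fact, immediate from the definition, that for a run $[a,b]$ with $b-a+1\geq h$ one has $h^{\wedge}[a,b]=\left[\frac{h(h-1)}{2}+ha,\ hb-\frac{h(h-1)}{2}\right]$, again a run of consecutive integers; this realizes the lower bound of Theorem \ref{Direct Thm Nathanson restricted}. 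Hence $h^{\wedge}A$ is a full interval throughout its interior, and only its two ends require attention.

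The two ends are controlled by $m:=\min h^{\wedge}A$ (the sum of the $h$ smallest elements of $A$) and $M:=\max h^{\wedge}A$ (the sum of the $h$ largest), which I would compute by inspection in each range. Since $0\in A$ but $1\notin A$, when $y,z\geq h+1$ the $h$ smallest elements are $\{0,2,3,\ldots,h\}$, so $m=\frac{h(h+1)}{2}-1$; when $y$, or both $y$ and $z$, lie in $[2,h]$ the minimal subset shifts upward accordingly. Symmetrically, when $z\leq k-h+1$ the $h$ largest elements form $[k-h+3,k+2]$, so $M=hk-\frac{h(h-5)}{2}$, and when $z$ (or $y,z$) sit near $k+1$ the maximal subset shifts downward. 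The ``bulk'' value is then $M-m+1$; for instance the generic count $hk-h^2+2h+2$ in (vii) is exactly this difference.

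The actual work is locating the exceptional points, which produce the $\pm1$ corrections in the case statement. The governing rule is: $m+1$ is attainable iff the successor of the largest element of the minimal subset again lies in $A$, so a hole there creates a gap immediately above $m$ and lowers the count by one; this is what happens for $y=h+1$ in (vii), and for the deleted point $z=h+2$ sitting one past the minimal block in (i) when $y=h$. The mirror statement, with $M-1$ and the predecessor of the smallest element of the maximal subset, governs the top end. Thus each sub-case is decided by testing membership in $A$ of at most two integers adjacent to the two extremal configurations, after which the interior is filled automatically by the backbone fact.

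The main obstacle is bookkeeping rather than any single estimate. One must partition the region $\{2\leq y<z\leq k+1,\ z-y\geq 2\}$ according to where $y$ falls relative to the low-end thresholds $h,h+1,h+2$ and where $z$ falls relative to the high-end thresholds $k-h,\,k-(h-1),\,k-(h-2),\,k-(h-3)$, and check that the exceptional-point count is constant on each resulting block; the delicate transitions are the overlap indices, such as separating $z=k-(h-2)$ from $z\in[k-(h-3),k+1]$ in (ii) and (iii), which is precisely the boundary between ``maximal block intact'' and ``one point detached at the top.'' Finally, note that the fixed hole at $1$ breaks the reflection $A\mapsto(k+2)-A$ (which sends $1$ to $k+1$), so the low and high ends behave genuinely differently and no sub-case follows from another by symmetry; this asymmetry is what forces the complete list (i)--(vii) and, dually, gives Proposition \ref{prop3.17} upon applying the same reflection.
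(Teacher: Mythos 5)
Your overall route is the same as the paper's: write $A$ as a union of runs, determine $m=\min h^{\wedge}A$ and $M=\max h^{\wedge}A$, argue that $h^{\wedge}A$ fills the interval $[m,M]$ except for isolated gaps adjacent to the two ends, and count. Your exchange rule (that $m+1$ is attained if and only if the successor of the largest element of the minimal $h$-subset lies in $A$, together with its mirror at the top) is sound: a competing $h$-subset realizing $m+1$ would have to swap a single element $b$ for $b+1\in A$ outside the minimal subset, and since every element of $A$ outside that subset exceeds its maximum, $b$ must be that maximum; swaps of two or more elements overshoot by at least $3$. This is a clean, uniform packaging of what the paper verifies ad hoc, case by case.

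However, your application of the rule in case (i) is wrong, and this is precisely the ``bookkeeping'' you yourself identify as the actual work. When $z=h+2$ and $y\in[2,h-1]$, the minimal $h$-subset is $\{0\}\cup\bigl([2,h+1]\setminus\{y\}\bigr)$, whose largest element is $h+1$; its successor is $h+2=z\notin A$, so by your own rule $m+1\notin h^{\wedge}A$ for \emph{every} such $y$, not merely for $y=h$ as you assert. Carried out faithfully, your method yields $|h^{\wedge}A|=hk-h^2+(y+z)-2$ on this stripe, one less than the formula in (i) you set out to prove. A concrete check: for $h=4$, $k=16$, $y=2$, $z=6$, so $A=[0,18]\setminus\{1,2,6\}$, one finds $4^{\wedge}A=\{12\}\cup[14,66]$, of size $54$, whereas the stated value $hk-h^2-1+(y+z)$ equals $55$. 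So your rule is correct but contradicts the statement (and, in fact, the paper's own proof of (i), which asserts ``evidently'' a full interval and overlooks this same gap). As written, your proposal glosses over this by claiming, without verification, that the exceptional-point count is constant on each block of the stated partition; on the block $y\in[2,h-1]$, $z\in[4,h+2]$ it is not (it is $0$ for $z\le h+1$ and $1$ for $z=h+2$). To complete the argument you must treat the stripe $z=h+2$ separately and reconcile the resulting count with the statement — which, on this stripe, cannot be done, so the discrepancy needs to be flagged rather than suppressed.
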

	\begin{proof}
		\begin{enumerate}
			\item [\upshape(i).]
			\begin{enumerate}
				\item[\upshape (a)]	If $y,z \in [2, h+2]$, then $A=\{0\}\cup [2,y-1] \cup [y+1,z-1] \cup [z+1,k+2]$. Evidently, \begin{center}
					$h^{\wedge}A =
					\bigg[\dfrac{(h+1)(h+4)}{2}-(y+z), hk - \dfrac{h(h-5)}{2}\bigg].$
				\end{center} 
				Therefore, we have $|h^{\wedge}A|=	hk-h^2+(y+z)-1.$
				\item[\upshape (b)] If $y=h$ and $z=h+2$, then $A=\{0\} \cup [2,h-1] \cup \{h+1\} \cup [h+3,k+2]$. Observe that, $ \dfrac{h(h+1)}{2}+1$ does not belong to $h^{\wedge}A$. Clearly, \begin{center}
					$h^{\wedge}A= \left\{\dfrac{h(h+1)}{2}\right\} \bigcup \left[\dfrac{h(h+1)}{2}+2, hk-\dfrac{h(h-5)}{2}\right].$
				\end{center}
				Therefore, $|h^{\wedge}A|= hk-h(h-2).$
			\end{enumerate}
			\item [\upshape (ii).]
			\begin{enumerate}
				\item [\upshape (a)] If $y \in [2,h]$ and $z\in [h+3,k-(h-1)]$, then $A=\{0\} \cup [2,y-1] \cup [y+1,z-1] \cup [z+1,k+2]$ and 
				\begin{center}
					$\left[ \dfrac{h(h+3)}{2}-y, hk- \dfrac{h(h-5)}{2}\right]$.
				\end{center}
				Therefore, $|h^{\wedge}A|=hk-h(h-1)+y+1.$
				\item [\upshape (b)] If $y \in [2,h]$ and $z=k-(h-2)$, then $A=\{0\} \cup [2,y-1] \cup [y+1, k-(h-1)] \cup [k-(h-3),k+2].$ Note that, $hk- \dfrac{h(h-5)}{2}-1$ does not belong to $h^{\wedge}A$. Clearly, \begin{center}
					$h^{\wedge}A= \left[\dfrac{(h^2+3h)}{2}-y, hk- \dfrac{h(h-5)}{2}-2 \right] \bigcup \left\{hk- \dfrac{h(h-5)}{2}\right\}.$
				\end{center}
				Therefore, $|h^{\wedge}A|=hk- h(h-1)+y.$
				\item[\upshape (c)]
				If $y \in [2,h]$ and $z \in [k-(h-3),k+1]$,  then \begin{center}
					$A=\{0\} \cup [2,y-1] \cup [y+1,z-1] \cup [z+1,k+2].$ 
				\end{center}  So, \begin{center}
					$h^{\wedge}A =\bigg[ \dfrac{(h^2+3h)}{2}-y, (h+1)k +3-\dfrac{(h-1)(h-2)}{2}-z\bigg].$
				\end{center}
				Therefore we have, 
				$|h^{\wedge}A|= (h+1)k-h^2+(y-z)+3. $

			\end{enumerate}
			\item[\upshape (iii).] 
			\begin{enumerate}
				\item [\upshape (a)] If  $y=h+1$ and $z \in [k-(h-3),k+1]$, then 
				$A=\{0\}\cup [2,h] \cup [h+2,z-1] \cup [z+1,k+2].$
				Note that $\dfrac{h(h+1)}{2} \notin h^{\wedge}A$ and \begin{center}$h^{\wedge}A=  \left\{\dfrac{h(h+1)}{2}-1\right\} \bigcup \bigg[\dfrac{h(h+1)}{2}+1, (h+1)k +3 - \dfrac{(h-1)(h-2)}{2}-z\bigg].$ \end{center} Therefore, we have 
				\begin{center}
					$|h^{\wedge}A|=(h+1)k-h(h-1)-z+3.$
				\end{center}
				\item [\upshape (b)] If  $y=h+1$ and $z=k-(h-2)$, then $A=\{0\}\cup [2,h] \cup [h+2, k-(h-1)] \cup [k-(h-3),k+2].$  Observe that $\dfrac{h(h+1)}{2}$ and $hk- \dfrac{h(h-5)}{2}-1$ do not belong to $h^{\wedge}A$. Clearly,
				\begin{multline*}
					h^{\wedge}A= \left\{\dfrac{h(h+1)}{2}-1\right\} \bigcup \left[\dfrac{h(h+1)}{2}+1,hk- \dfrac{h(h-5)}{2}-2 \right] 
					\bigcup \left\{hk- \dfrac{h(h-5)}{2}\right\}. 
				\end{multline*}
				Therefore, $|h^{\wedge}A|=hk-h(h-2).$
			\end{enumerate}
			\item[\upshape (iv).] If $y = k-h$ and $z=k-(h-2)$, then $A=\{0\} \cup [2,k-(h+1)] \cup \{k-(h-1)\} \cup [k-(h-3),k+2]$. Observe that, $hk- \dfrac{h(h-5)}{2} -1$ does not belong to $h^{\wedge}A$ and 
			\begin{center}
				$\left[\dfrac{h(h+1)}{2}-1, hk- \dfrac{h(h-5)}{2} -2\right] \bigcup \left\{hk- \dfrac{h(h-5)}{2}\right\}$.
			\end{center}
			Therefore, $|h^{\wedge}A|= hk -h(h-2)+1.$
			\item[\upshape (v).] 
			\begin{enumerate}
				\item[\upshape (a)] If $y \in [h+2,k-h]$ and $z \in [k-(h-3),k+1]$, then
				$A=\{0\}\cup [2,y-1] \cup [y+1,z-1] \cup [z+1,k+2].$
				It is evident that, \begin{center}
					$h^{\wedge}A=\bigg[\dfrac{h(h+1)}{2}-1, (h+1)k +3 - \dfrac{(h-1)(h-2)}{2}-z\bigg].$
				\end{center} Therefore,
					$|h^{\wedge}A|=(h+1)k-h(h-1)-z+4.$
				 
				\item [\upshape (b)] If $y=k-(h-1)$ and $z \in [k-(h-3),k+1]$, then $A=\{0\}\cup [2,k-h] \cup [k-(h-2),z-1] \cup [z+1,k+2].$ Note that $(h+1)k - \dfrac{(h-1)(h-2)}{2}-z+2$ does not belong to $h^{\wedge}A$ and \begin{center}
					$h^{\wedge}A=\bigg[\dfrac{h(h+1)}{2}-1, (h+1)k  - \dfrac{(h-1)(h-2)}{2}-z+1\bigg] \bigcup \left\{(h+1)k  - \dfrac{(h-1)(h-2)}{2}-z+3\right\}$.
				\end{center}
				Therefore, $|h^{\wedge}A|=(h+1)k-h(h-1)-z+3.$
			\end{enumerate}
			\item [\upshape(vi).] If $y \in [k-(h-2),k-1]$ and $z \in [k-(h-4),k+1]$, then $A=\{0\} \cup [2,y-1] \cup [y+1,z-1] \cup [z+1,k+2]$ and 
			\begin{center}
				$	h^{\wedge}A= \left[ \dfrac{h(h+1)}{2}-1, (h+2)k - \dfrac{h(h-1)}{2} -(y+z) +3\right]$.
			\end{center}
			 Therefore, $|h^{\wedge}A|= (h+2)k - h^2 -(y+z)+5.$
			\item [\upshape(vii).] 
			\begin{enumerate}
				\item[\upshape (a)] If $y=h+1$, $z \in [h+3,k-(h-1)] $, then $A=\{0\} \cup [2,h] \cup [h+2,z-1] \cup [z+1,k+2].$ It is evident that, $\dfrac{h(h+1)}{2} \notin h^{\wedge}A$ and 
				\begin{center}
					$h^{\wedge}A= \left\{\dfrac{h(h+1)}{2}-1\right\} \bigcup \bigg[\dfrac{h(h+1)}{2}+1, hk - \dfrac{h(h-5)}{2}\bigg].$
				\end{center}
				Therefore, $|h^{\wedge}A|=hk-h^2+2h+1.$
				\item[\upshape(b)]
				If $y \in [h+2,k-(h+1)]$, $z \in [h+4,k-(h-1)] $, then 
				$A=\{0\} \cup [2,y-1] \cup [y+1,z-1] \cup [z+1,k+2].$
				Clearly,
				\begin{center}
					$h^{\wedge}A=\bigg[\dfrac{h(h+1)}{2}-1, hk - \dfrac{h(h-5)}{2}\bigg].$
				\end{center} Therefore, $|h^{\wedge}A|=hk-h^2+2h+2.$
			\end{enumerate}
		\end{enumerate}
	\end{proof}
	\begin{prop}\label{prop3.17}
		Let $h \geq 4$ and $k \geq 3h+2$ be positive integers. Let $$A=[0,k+2] \setminus \{x,y,k+1\}, ~ \text{where} ~ 2 \leq x<y\leq k ~ \text{with} ~ y-x \geq 2.$$
		\begin{enumerate}
			\item[ \upshape (i)] If $ x \in [k-h,k-2] $ and $y \in [k-(h-2),k]$, then \[|h^{\wedge}A|=
			\begin{cases}
				(h+2)k-h^2-(x+y)+3, ~ & ~ \text{if} ~ x \in [k-h,k-2] ~ \text{and} ~ y \in [k-(h-3),k];\\
				hk-h(h-2), ~ & ~ \text{if} ~ x=k-h ~ \text{and} ~ y=k-(h-2).
			\end{cases}
			\]
			\item[ \upshape (ii)] If $x \in [1,k-(h+1)]$ and $ y \in [k-(h-2),k]$ , then \[|h^{\wedge}A|=
			\begin{cases}
				(h+1)k -h(h-1)-y+3, ~ & ~ \text{if} ~ x \in [h+1,k-(h+1)];\\
				(h+1)k-h(h-1)-y+2, ~ & ~ \text{if} ~ x=h;\\
				(h+1)k-h^2+(x-y)+3, ~ & ~ \text{if} ~ x \in [1,h-1].  
			\end{cases}
			\]
			\item[ \upshape (iii)] If $x \in [1,h]$ and $y=k-(h-1)$, then \[|h^{\wedge}A|=	
			\begin{cases}
				hk-h(h-1)+x+1, ~ & ~ \text{if} ~ x \in [1,h-1];\\
				hk-h(h-2), ~ & ~ \text{if} ~ x=h.
			\end{cases}
			\]
			\item[ \upshape (iv)] If  $x=h$ and $y=h+2$, then $|h^{\wedge}A|= hk-h(h-2)+1$.
			\item[ \upshape (v)] If   $ x \in [1,h-1]$ and $y \in [h+1,k-h]$, then \[|h^{\wedge}A|=
			\begin{cases}
				hk-h(h-1)+x+2, ~ & ~ \text{if} ~ y \in [h+2,k-h];\\
				hk-h(h-1)+x+1, ~ & ~ \text{if} ~ y=h+1.
			\end{cases} .\]
			\item[ \upshape (vi)] If $x \in [1,h-2]$ and $y \in [3,h]$, then $|h^{\wedge}A|=hk-h^2+(x+y)+1$.
			\item[ \upshape (vii)] If $x \in [h+1,k-(h+1)]$ and $y \in [h+3,k-(h-1)]$, then  \[|h^{\wedge}A|=
			\begin{cases}
				hk-h^2+2h+1,  ~ & ~ \text{if} ~ y=k-(h-1);\\
				hk-h^2+2h+2, ~ & ~ \text{if} ~ y \in [h+3,k-h].
			\end{cases} \]

		\end{enumerate}
	\end{prop}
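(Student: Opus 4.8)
The plan is to obtain this proposition from Proposition \ref{prop 3.16} by reflecting $A$ through the midpoint of the interval $[0,k+2]$, exactly as the remark preceding the statement indicates. Concretely, consider the map $a \mapsto (k+2) - a$, which is a dilation by $-1$ followed by a translation; since $|h^{\wedge}A|$ is dilation- and translation-invariant, we have $|h^{\wedge}A| = |h^{\wedge}((k+2) - A)|$. Applying this map to $A = [0,k+2]\setminus\{x,y,k+1\}$ fixes $[0,k+2]$ setwise and sends the deleted triple $\{x,y,k+1\}$ to $\{(k+2)-x,\,(k+2)-y,\,1\}$, so
\[
(k+2) - A = [0,k+2] \setminus \{1,\, y',\, z'\}, \qquad y' := k+2-y,\quad z' := k+2-x.
\]
From $2 \le x < y \le k$ and $y-x \ge 2$ one reads off $2 \le y' < z' \le k$ together with $z' - y' = y - x \ge 2$, so $(k+2)-A$ is precisely of the form treated in Proposition \ref{prop 3.16}. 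Thus $|h^{\wedge}A|$ is computed by substituting $y' = k+2-y$ and $z' = k+2-x$ into the formulas there.

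The remaining work is purely a matter of matching the case distinctions. First I would check that each of the seven hypotheses (i)--(vii) here translates, under $y \mapsto y' = k+2-y$ and $x \mapsto z' = k+2-x$, into exactly the corresponding hypothesis of Proposition \ref{prop 3.16}: for example, in (i) the ranges $x \in [k-h,k-2]$, $y \in [k-(h-2),k]$ become $z' \in [4,h+2]$, $y' \in [2,h]$, which is case (i) there, and in (vii) the ranges $x \in [h+1,k-(h+1)]$, $y \in [h+3,k-(h-1)]$ become $z' \in [h+3,k-(h-1)]$, $y' \in [h+1,k-(h+1)]$, which is case (vii) there. Then I would substitute into each output formula and simplify. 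Thus the first line of (i) in Proposition \ref{prop 3.16}, namely $hk-h^2-1+(y'+z')$, becomes $(h+2)k-h^2-(x+y)+3$ via $y'+z' = 2k+4-x-y$, matching the first line of (i); and the three lines of case (ii) there, which use $y'+1$, $y'$, and $y'-z'$, collapse to the three displayed expressions in (ii) after inserting $y' = k+2-y$ and $z' = k+2-x$. The same substitution handles (iii)--(vi) line by line.

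The one point requiring genuine care is the interaction between the constraint $y-x \ge 2$ (equivalently $z'-y' \ge 2$) and the boundary subcases. For instance, the global hypothesis of (i) admits $y = k-(h-2)$, i.e. $y' = h$, whereas Proposition \ref{prop 3.16}(i) isolates $y'=h$ only in tandem with $z'=h+2$; here one must observe that $z'-y'\ge 2$ forces $z'=h+2$ (hence $x=k-h$) whenever $y'=h$, so no configuration is left uncovered and the two subcase boundaries coincide. The analogous endpoint checks at $y'=h+1$, $z'=k-(h-2)$, and so on are the main --- though routine --- obstacle: the bookkeeping must confirm that every boundary value of $x$ and $y$ lands in the intended branch of the relevant piecewise formula, and that the admissibility constraint $z'-y'\ge 2$ excludes exactly the spurious combinations. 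Once these boundary identifications are verified, the seven cases follow directly from Proposition \ref{prop 3.16}, completing the proof.
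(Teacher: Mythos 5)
Your proposal is correct and is exactly the paper's own argument: the paper derives Proposition \ref{prop3.17} from Proposition \ref{prop 3.16} by invoking translation and dilation invariance of $|h^{\wedge}A|$ (i.e.\ the reflection $a \mapsto (k+2)-a$), which is precisely the map you use, and your case-by-case matching under $y' = k+2-y$, $z' = k+2-x$ (including the boundary checks forced by $z'-y' \ge 2$) fills in the details the paper leaves implicit.
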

	
	\begin{center}
		\textbf{Proofs of the Theorem \ref{one element EIT} - Theorem \ref{3 element EIT gen}}
	\end{center}
	
	\textit{Proof of the Theorem \ref{one element EIT}.} From Theorem \ref{Lemma 1}, it follows that $A \subseteq [0,k]$. So, we assume $A=[0,k] \setminus \{x\}$, where $1 \leq x \leq k-1$. For $x \in \{0,k\}$, $A$ becomes an arithmetic progression. From Proposition \ref{one element EIT prop}, $|h^{\wedge}A|=hk-h^2+2$ holds when $x \in \{1,k-1\}$. \\
	
	Proofs of the Theorem \ref{two element EIT h=3} - Theorem \ref{3 element EIT gen} follows similarly.


\bibliographystyle{amsplain}

\end{document}